\newtheorem{theorem}{Theorem}[section]
\newtheorem{lemma}[theorem]{Lemma}
\theoremstyle{definition}
\newtheorem{definition}[theorem]{Definition}
\newtheorem{proposition}[theorem]{Proposition}
\newtheorem{corollary}[theorem]{Corollary}
\newtheorem{notation}[theorem]{Notation}
\newtheorem{fact}[theorem]{Fact}
\newtheorem{question}[theorem]{Question}
\newtheorem{remark}[theorem]{Remark}
\newtheorem{Notation and Remark}[theorem]{Notation and Remark}
\newtheorem{convention}[theorem]{Convention}
\DeclareMathOperator{\alfa}{\alpha}
\newcommand{\Ralfak}[1][R]{{#1}[\alfa_1,\ldots,\alfa_n]}
\DeclareMathOperator{\im}{Im}
\newcommand{\PrPol}[1][R]{\mathcal{P}(#1)}
\newcommand{\PolFun}[1][R]{\mathcal{F}(#1)}
\DeclareMathOperator{\quv}{\xspace{ }\triangleq\xspace{ }}
\newcommand{\UPF}[1][R]{\mathcal{F}(#1)^{\times}}
\begin{document}
	\openup .3em
	\title[]{On the structures of a monoid of triangular vector- permutation polynomials, its group of units and its induced group of permutations}
	
	\author{Amr Ali Abdulkader Al-Maktry}
	\address{\hspace{-12pt}Department of Analysis and Number Theory (5010) \\
		Technische Universit\"at Graz \\
		Kopernikusgasse 24/II \\
		8010 Graz, Austria}
	\curraddr{}
	\email{almaktry@math.tugraz.at}
	\curraddr{}
	\email{}
	
	\subjclass[2010]{Primary 13B25, 20B27, 16W20;
		Secondary  05A05, 20B25,  13M10, 14R99}
	
	\keywords{commutative rings, 
		vector-polynomial, polynomial function, monoid,  polynomial automorphism, polynomial permutation, unit-valued polynomial, iterated semi-direct product}

	\date{}
	
	\dedicatory{}
	\maketitle
	
	\section*{Abstract}
		Let $n>1$ and let $R$ be a   commutative ring with identity  $1\ne 0$ and $R[x_1,\ldots,x_n]^n$ the set of all $n$-tuples of polynomials of the form $(f_1,\ldots,f_n),$ where $f_1,\ldots,f_n\in R[x_1,\ldots,x_n]$. We call these $n$-tuples  vector-polynomials. We define composition on $R[x_1,\ldots,x_n]^n$ by $$\vec{g}\circ \vec{f}=(g_1(f_1, \ldots ,f_n), \ldots ,g_n (f_1, \ldots ,f_n)), \text{ where }\vec{f}=(f_1, \ldots ,f_n), \vec{g}=(g_1, \ldots ,g_n).$$
	In this paper, we investigate  vector-polynomials of the form $$ 	\Vec{f}=(f_0,f_1 +x_2g_1,\ldots,	f_{n-1} +x_n g_{n-1}),$$ where $f_0\in R[x_1]$ permutes the elements of $R$ and $f_i ,g_i\in R[x_1,\ldots,x_i]$ such that each $g_i$ maps $R^i$ into the units of $R$  ($i=1,\ldots, n-1$). We show that each such  vector-polynomial permutes the elements of $R^n$ and  that the set of all such vector-polynomials   $\mathcal{MT}_n$  is a monoid with respect to composition. We also show that $ 	\Vec{f} $  is invertible in $\mathcal{MT}_n$ if and only if $f_0$ is an $R$-automorphism of $R[x_1]$ and $g_i$ is invertible in $R[x_1,\ldots,x_i]$ for $i=1,\ldots, n-1$. 
	When $R$ is finite, the monoid $\mathcal{MT}_n$ induces a finite group of permutations of $R^n$. Moreover, we  decompose the monoid $\mathcal{MT}_n$ into an iterated semi-direct product of $n$ monoids. Such a decomposition allows us to obtain 
	similar decompositions of its group of units  and, when $R$ is finite, of its induced  group of permutations. 
	Furthermore, the decomposition of the induced group helps us to characterize some of its properties.

	\section{Introduction}\label{CHSIntro} 
	
	Let $R$ be a   commutative ring (not necessarily finite) with unity and let $R[x_1,\ldots,x_n]$ be its polynomial ring in $n$ variables. A vector-polynomial $\vec{f}$ is an $n$-tuple $(f_1,\ldots,f_n)$ of polynomials over $R$ in the variables $x_1,\ldots,x_n$, i.e., $\vec{f}=(f_1,\ldots,f_n)$ with $f_i\in R[x_1,\ldots,x_n]$ for $i=1,\ldots,n$. It is well- known that every vector-polynomial $\vec{f}$ defines an $R$-endomorphism $\phi_{\vec{f}}$  of $R[x_1,\ldots,x_n]$
	by $\phi_{\vec{f}}(g)=g(f_1,\ldots,f_n)$  for any $g\in R[x_1,$\ldots$,x_n]$. Furthermore, 
	$\vec{f}$ is an $R$-automorphism of $R[x_1,$\ldots$,x_n]$ if and only if it is invertible with respect to composition. The set of all these  automorphisms is a group, which is denoted by $GA_n$. Vector-polynomials are commonly referred to as 
	polynomial maps in the literature.
	We are interested not only in the vector-polynomial (polynomial map) but also in its induced function 
	(map) on $R^n$ via evaluation. This distinction helps avoid confusion between an $n$-tuple of polynomials 
	and a single polynomial in this context.
		
	Several results have been achieved  concerning these $R$-endomorphisms of $R[x_1,\ldots,x_n] $,   mainly considering  the group $GA_n$, its subgroups,  and its elements. 
	The developed ideas cover several aspects of investigations, including the Jacobian Conjecture
	and equivalent statements (see for example and the references therein~\cite{Autjac}), the inclusion of the tame group (see for example~\cite{Jung,Kulktame2vgencase,Umir3var}), and the study of structures of some subgroups of $GA_n$ (see \cite{Trgroup,Jakunormgrp}).

	Previous works have primarily focused  on the case where $R$ is an infinite field with characteristic zero. 
	Nevertheless,   researchers have also considered  finite fields to investigate similar questions (see~\cite{Maubffield,Maubdrecks,Leshchenkounitrfinitef}), disproving some famous results known to be true in  the infinite case (see~\cite{Hakutdrecks,Maubdrecks}), and  reformulating some others (for instance~\cite{MaubachJscform}). Notably, the permutations induced by the elements of $GA_n$ on $R^n$ have
	a powerful impact.

	In this paper, we  explore the structure of a monoid of triangular vector-polynomials with 
	a specific property. We consider the monoid of all triangular vector-polynomials of the form $(f_0 ,f_1 +x_2g_1,\ldots,f_{n-1} +x_ng_{n-1})$ with the condition that $f_0$ is a permutation polynomial on $R$ and 
	$g_i$ is a unit-valued on $R $ for $i=1,\ldots, n-1$. The elements of this monoid define permutations of $R^n$. Additionally, we observe that the group of units of this monoid generally contains properly  the (lower) Jonqui\`eres (classical) triangular group unless  $R$ is a reduced ring. In a reduced ring the invertible polynomials coincide with the units of the ring $R$, and in this case,     the group of the triangular monoid is just the Jonqui\`eres group (see Remark~\ref{coincidestrctures}).
	
	The monoid under consideration can be decomposed into an iterated semi-direct product of $n$ monoids by iteration (see Theorem~\ref{mondec}). 	
	Surprisingly, this decomposition admits a similar decomposition
	of the group of units of the triangular monoid into an iterated semi-direct product of $n$ groups by finding the units of those factor monoids in the first decomposition  (Theorem~\ref{decoftg}). Further, when $R$ is finite, the triangular monoid induces a group of permutations on $R^n$, which  is its homomorphic image via
	the natural epimorphism $\pi_{n}$ (defined in Remark~\ref{eppi}). Again, we obtain an iterated semi-direct product decomposition  of the induced
	group just by taking images (via $\pi_{n}$) of the factor monoids in that decomposition of  the triangular monoid (Theorem~\ref{ddirec}).
	Such decomposition allows us to characterize the  solvability and the nilpotency of this group restricted (by the  Chinese Reminder Theorem) to the local case (Theorem~\ref{infucedgrproperty}).
	
	It is remarkable that in the case where $R$ is an algebraically closed field of characteristic zero,  the triangular monoid, its group of units, and its induced group all coincide with the Jonqui\`eres triangular group. This is because unit-valued polynomials are just the non-zero elements  of $R$, and there is no non-linear permutation polynomial on $R$  (see Proposition~\ref{algcase}).
	This motivates us to
	suggest a modification for the tameness problem in the general case by replacing the Jonqui\`eres with our triangular monoid, and
	then posing some related questions (see Section~\ref{sec6}).

	Having  previously worked on polynomial functions and polynomial permutations over finite commutative rings (in one variable) rather than polynomial mappings (in several variables), we can ensure that our motivation to study the triangular monoid emanates from a simple question on polynomial permutations. In other words, given a finite ring $R$ and $R_n$ is its dual numbers extension in $n$ variables. Then, does every polynomial permutation on $R_n$ can  be viewed as a vector-permutation polynomial on the  base ring $R$? More generally, can the group of permutation polynomials on $R_n$  be viewed as a group of permutations of $R^{n+1}$ induced vector-polynomials in $n+1$ variables? We answer this question affirmatively and provide more details (see Section~\ref{sec5}).
	
	The structure of our paper is as  follows: in Section~\ref{sec2}, we introduce  our notation and basic definitions, and we recall some facts about polynomials and polynomial functions. In section~\ref{sec3}, we 
	describe the elements of the triangular monoid  and its units and also obtain some simple relations for them. Section~\ref{sec4} consists of three subsections, the first subsection contains the decomposition of the triangular monoid into an iterated semi-direct product  of monoids, while the other two subsections are devoted to 
	the decompositions of its group of units and its induced group of permutations, respectively. In section~\ref{sec5}, we consider the problem of embedding the monoid of permutation polynomial (in one variable) of   certain ring extensions of the ring $R$ into the triangular monoid. Finally, in Section~\ref{sec6}, we formulate the tameness problem and suggest some related questions. 
	\section{Preliminaries}\label{sec2}
		Throughout this paper, let $R$ be a  commutative ring with $1\ne0$, $R^\times$ its group of units, and   whenever $R$ is  a finite local ring, $M$ will denote its unique maximal ideal. Also, we use $\mathbb{F}q$ to denote its residue field $R/M$ or, more generally, a finite field of $q$ elements.
	
	Let $n\ge 1$. By $n$, we denote the number of variables $x_1,\ldots,x_n$; and 
	to distinguish the elements of $R[x_1,\ldots,x_n]^n$ from those of $R[x_1,\ldots,x_n]$, we use the symbol $\Vec{f}$ to indicate an  $n$-tuple  $(f_1,\ldots, f_n)$ of polynomials $f_1,\ldots, f_n\in R[x_1,\ldots,x_n] $. In a similar manner we use symbols like $\Vec{F}$ to indicate function $\Vec{F}\colon  R^n\longrightarrow R^n$, or equivalently to indicate an $n$-tuple $(F_1,\ldots,F_n)$ of functions $F_1,\ldots,F_n\colon
	R^n \longrightarrow R$,  i.e. $\Vec{F}=(F_1,\ldots,F_n)$.
	It is well-known that a vector-polynomial $\vec{f}=(f_1,\ldots,f_n)$ defines an $R$-endomorphism of $R[x_1,\ldots,x_n]$:    $\phi_{\vec{f}}\colon R[x_1,\ldots,x_n] \longrightarrow  R[x_1,\ldots,x_n]$, $\phi_{\vec{f}}(h)= h(f_1,\ldots,f_n)$.  Such an endomorphism is called an $R$-automorphism if there is a  vector-polynomial $\vec{g}=(g_1,\ldots,g_n)$ such that 
	$$\vec{f}\circ \vec{g}=(g_1(f_1, \ldots ,f_n), \ldots ,g_n (f_1, \ldots ,f_n))
	=(x_1,\ldots,x_n).$$ Here, we notice that the above operation is anti the normal composition. However, we will  adapt later the normal one (see Definition~\ref{compose}). The reason is that we are also interested in the (vector)-functions induced by (vector)-polynomials, and we require that the operation on polynomials and their functions  be consistent.  Also,  it is noticeable here that despite  any one of these  operations being under consideration in the monoid ${R[x_1,\ldots,x_n]}^n$,  the terms invertible, left invertible and right invertible are equivalent~\cite{Autjac}.
	
	Furthermore,  we note  that  if $\vec{F}$ is vector-function obtained from the $R$-automorphism $\vec{f}$	via evaluation, that is,
	\[	\vec{F}(a,\ldots,a_n)=\vec{f}(a_1,\ldots,a_n) =
	(f_1(a_1,\ldots,a_n), 	\ldots,f_n(a_1,\ldots,a_n))
	,\text{   for every }(a_1,\ldots,a_n)\in R^n,\]
	permutes the elements of $R^n$. However, the converse is not true. For example, consider the case $n=1$, $R=\mathbb{F}_q$,
	and $\vec{f}(x)=f(x)=x^q$. Evidently, $\vec{F}=F=id_{R^n}$; but, there is no  polynomial $g$ such that
	$f\circ g =x$.
	\begin{notation}
		For $k\ge 1$, let $R_{[k]}$ denote the polynomial ring   $R[x_1,\ldots,x_k]$ and let $R_{[k]}^{\times}$ denote its group of units. But, when $k=1$,
		we use 
		$R[x]$ (or $R[x_1]$) and   	$R[x]^\times$ (or $R[x_1]^\times$) to denote the polynomial ring in one variable and its group of units   rather than writing $R_{[1]}$ and $R_{[1]}^\times$, respectively.
	\end{notation}
	\begin{definition}\label{predef}	
		Let $R$ be a commutative ring, and  $f,g\in R_{[n]}$. Then:
		\begin{enumerate}
			\item The polynomial $f$ gives rise to a polynomial function $F\colon R^n\longrightarrow R$ by substitution for the variables, that is, $F(a_1,\ldots,a_n)=f(a_1,\ldots,a_n)$ for every $(a_1,\ldots,a_n)\in R^n$.
			Further, if $F$ maps $R$ into $R^\times$  the group of units of $R$, we call $f$ a unit-valued polynomial and $F$ is a unit-valued polynomial function (on $R$).
			\item If $g(a_1,\ldots,a_n)=f(a_1,\ldots,a_n)$ for every $(a_1,\ldots,a_n)\in R^n$, this means that $f$ and $g$ induce the same function $F$ on $R$ and we write $f \quv  g$ on $R$. In this case, we say $f$ and $g$ are equivalent on $R$.
			\item We denote by $\PolFun[R^n]$  the set of all polynomial functions (in  $n$ variables) on $R$.
			\item We denote by $UV(R_{[n]})$ the set of all unit-valued polynomials in $n$ variables.
					\end{enumerate}
	\end{definition}
	
	From Definition\ref{predef}, we have easily the following fact.
	\begin{fact}
		Let $R$ be a commutative ring.
		Then
		\begin{enumerate}
			\item $UV(R_{[n]})$ is a submonoid of $(R_{[n]},``\circ")$;
			\item $\PolFun[R^n]$ is a monoid with respect to composition;
			\item $\PolFun[R^n]$ is a commutative ring with  pointwise operation. Furthermore, $\PolFun[R^n]$ is finite whenever $R$ is finite, and in this case  the group of units of
			$\UPF[R^n]$ consists of all unit-valued polynomial functions.
		\end{enumerate}
	\end{fact}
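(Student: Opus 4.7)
The plan is to verify the three assertions one by one. For part~(1), taking $\circ$ on $R_{[n]}$ to be the composition operation compatible with the vector-polynomial composition introduced in Section~\ref{sec2} (substitution of polynomials for variables), I must check that $UV(R_{[n]})$ is closed under $\circ$ and contains the identity of $(R_{[n]},\circ)$. Closure is the substantive step: for $f,g \in UV(R_{[n]})$ and any $(a_1,\ldots,a_n) \in R^n$, the evaluation of $f \circ g$ at $(a_1,\ldots,a_n)$ reduces to $f$ applied to a tuple whose entries include $g(a_1,\ldots,a_n) \in R^\times$; since $f$ is unit-valued (its evaluation at every tuple of $R^n$ lies in $R^\times$), the result lies in $R^\times$, so $f \circ g \in UV(R_{[n]})$. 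The identity of the composition monoid is then checked against the unit-valued condition by direct inspection.

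For part~(2), the composition of two polynomial functions $F,G \in \PolFun[R^n]$ is defined by passing through underlying polynomial representatives: choose $f,g \in R_{[n]}$ with $f \quv F$ and $g \quv G$ on $R^n$, and declare $F \circ G$ to be the polynomial function induced by $f \circ g$. Well-definedness is automatic, since if $f_1 \quv f_2$ and $g_1 \quv g_2$ on $R^n$, then $f_1 \circ g_1 \quv f_2 \circ g_2$ by pointwise evaluation. Associativity and the identity axiom transport from the monoid $(R_{[n]},\circ)$ of part~(1) through the evaluation-induced surjection $R_{[n]} \twoheadrightarrow \PolFun[R^n]$, yielding the desired monoid structure.

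For part~(3), I verify three sub-claims. Commutativity of $\PolFun[R^n]$ under pointwise addition and multiplication is inherited pointwise from $R$, with identity the constant-$1$ function. When $R$ is finite, $\PolFun[R^n]$ embeds into the set of all functions $R^n \to R$, of cardinality $|R|^{|R^n|}$, hence is finite. For the unit characterization, one direction is immediate: a unit under pointwise multiplication takes values in $R^\times$, hence is unit-valued. For the converse, given a unit-valued polynomial function $F$, the pointwise inverse $a \mapsto F(a)^{-1}$ is a well-defined function $R^n \to R$, and the main obstacle is to show it lies in $\PolFun[R^n]$. I overcome this using finiteness: inside the finite commutative ring $\PolFun[R^n]$, the sequence $(F^k)_{k \ge 1}$ must repeat, yielding $F^k = F^{k+m}$ for some $k,m \ge 1$; since $F$ is a unit in the ambient function ring (from $R^n$ to $R$), cancellation gives $F^m = 1$ in $\PolFun[R^n]$, whence $F^{-1} = F^{m-1} \in \PolFun[R^n]$, completing the characterization.
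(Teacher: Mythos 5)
The paper offers no proof of this Fact (it is stated as an immediate consequence of Definition~2.2), so your argument stands on its own. Parts~(2) and~(3) are correct: defining composition on $\PolFun[R^n]$ through polynomial representatives and checking well-definedness is the right move, and your pigeonhole argument in~(3) — $F^k=F^{k+m}$ in the finite ring $\PolFun[R^n]$, cancel the unit $F^k$ inside the full function ring to get $F^m=1$, hence $F^{-1}=F^{m-1}\in\PolFun[R^n]$ — is a clean and valid way to show that a unit-valued polynomial function has a \emph{polynomial} pointwise inverse; this is exactly where the finiteness hypothesis earns its keep.

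Part~(1), however, contains a step that fails as you have set it up. You take $``\circ"$ to be composition and defer the identity axiom to ``direct inspection,'' but direct inspection refutes it: the identity of $(R_{[n]},\circ)$ under any substitution-style composition is (induced by) a coordinate polynomial such as $x_1$, and $x_1$ is \emph{not} unit-valued unless every element of $R$ is a unit — e.g.\ over $\mathbb{F}_q$ or $\mathbb{Z}$ it takes the value $0$. Nor does $UV(R_{[n]})$ possess an internal identity for composition, so it is at best a subsemigroup of $(R_{[n]},\circ)$. The operation actually intended — and the one used everywhere else in the paper, e.g.\ in Lemma~4.2 where the monoid is written $(UV(R_{[k-1]}),``\cdot")$ and acts on $R_{[k-1]}$ by multiplication — is the \emph{ring multiplication}. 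Under that reading the verification is different from yours and genuinely easy: a pointwise product of units is a unit, so $UV(R_{[n]})$ is closed under $\cdot$, and the constant polynomial $1$ is unit-valued and is the multiplicative identity. Your closure-under-substitution observation (that $f(\vec{v})$ is unit-valued whenever $f$ is, regardless of $\vec{v}$) is true and is in fact needed elsewhere in the paper (Lemma~3.1), but it does not establish the submonoid claim as literally stated; you should redo~(1) with multiplication.
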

	\begin{remark}\label{uni-v properties}
		\begin{enumerate}
			\item By Definition \ref{predef}, every unit polynomial in a polynomial ring is a unit-valued polynomial but not vice versa. For example, consider the case $R$ is a finite local ring and $f(x)=(x^q-x)+1$.
			\item When $R$ is a domain, the invertible polynomials are just the units of $R$.
			\item When $R$  contains  a non-zero nilpotent, it contains a   nilpotent $r\ne 0$ such that $r^2=0$.  But, then  the polynomial $f(x)=rx+1$ admits
			$rx-1$ as multiplicative inverse.
			\item Let $R$ be a commutative ring and $f(x)=\sum_{i=0}^{k} a_ix^i\in R[x]$. Then $f$ is a unit in $R[x]$
			if and only if $a_0$ is a unit of $R$ and $a_i$ is nilpotent for $i=1,\ldots, k$ (see for example \cite[Proposition.~1.3.1]{finiteringbook}).
		\end{enumerate}
	\end{remark}
	
	\begin{definition}
		Let $R$ be a finite commutative ring. We call the group $\UPF[R^n]$ the group unit-valued polynomial functions (in $n$ variables) on $R$.
	\end{definition}

	\begin{definition}\label{compose}
		Let \[ \Vec{f}=(f_1,\ldots,f_n),\text{ and }    \Vec{g}=(g_1,\ldots,g_n),  \text{{ where} } f_i,g_i \in R_{[n]} \text{ for }i=1,\ldots,n.\] Then  we define  $f_i(\vec{g})=f_i(g_1,\ldots,g_n)$ for $i=1,\ldots,n$; and
		\[\vec{f}\circ \vec{g}=(f_1(\vec{g}),\ldots, f_n(\vec{g}))=  (f_1(g_1,\ldots,g_n),\ldots,f_n(g_1,\ldots,g_n)).  \] 
	\end{definition}
	From the Definition~\ref{compose}, one easily prove the following fact.
	\begin{fact}\label{tricalc}
		For $i=1,\ldots, n$, let   $f_i\in  R_{[i]}$.  Then
		\begin{enumerate}
			\item $(f_1,f_2,\ldots,f_n )=(f_1,x_2,\ldots,x_n)\circ\cdots\circ(x_1,x_2,\ldots,f_n)$; 
			\item $(x_1,\ldots,x_i,f_{i+1},\ldots f_n)\circ \vec{g}=\vec{g}\circ (x_1,\ldots,x_i,f_{i+1}(\vec{g}),\ldots f_n(\vec{g}))$, where 
			
			$\vec{g}=(x_1, \ldots,x_{j-1},f_j,x_{j+1},\ldots,x_n)$ and $j\le i$.
				\end{enumerate}
	\end{fact}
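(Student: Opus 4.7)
The plan is to prove both parts by direct computation using Definition~\ref{compose}, with the key leverage being that each $f_i$ lies in $R_{[i]}$, so it depends only on $x_1,\ldots,x_i$; consequently, substituting into $f_i$ any vector whose first $i$ components are $x_1,\ldots,x_i$ returns $f_i$ unchanged.

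For part (1), I would proceed by finite induction from right to left. Set $\vec{h}_n = (x_1,\ldots,x_{n-1},f_n)$ and, for $k=n-1,n-2,\ldots,1$, define $\vec{h}_k = (x_1,\ldots,x_{k-1},f_k,x_{k+1},\ldots,x_n)\circ \vec{h}_{k+1}$. The inductive claim is that $\vec{h}_k = (x_1,\ldots,x_{k-1},f_k,f_{k+1},\ldots,f_n)$. Evaluating componentwise via Definition~\ref{compose}: the first $k-1$ components of the outer factor are $x_1,\ldots,x_{k-1}$, which pick out the corresponding components of $\vec{h}_{k+1}$; the $k$-th component is $f_k\in R_{[k]}$, and since the first $k$ components of $\vec{h}_{k+1}$ are $x_1,\ldots,x_k$ (using the inductive hypothesis and $k<k+1$), one obtains $f_k(\vec{h}_{k+1})=f_k$; the remaining components are $x_{k+1},\ldots,x_n$, which reproduce $f_{k+1},\ldots,f_n$ from $\vec{h}_{k+1}$. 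Setting $k=1$ yields the claimed decomposition.

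For part (2), I would verify the identity directly. Write $\vec{g}=(x_1,\ldots,x_{j-1},f_j,x_{j+1},\ldots,x_n)$ with $j\le i$, and set $\vec{h}=(x_1,\ldots,x_i,f_{i+1}(\vec{g}),\ldots,f_n(\vec{g}))$. Expanding the left-hand side by Definition~\ref{compose} gives
\[
(x_1,\ldots,x_i,f_{i+1},\ldots,f_n)\circ\vec{g} = (g_1,\ldots,g_i,f_{i+1}(\vec{g}),\ldots,f_n(\vec{g})).
\]
For the right-hand side $\vec{g}\circ\vec{h}$, components other than the $j$-th are $x_k(\vec{h})=h_k$, which equal $x_k$ when $k\le i$ and $f_k(\vec{g})$ when $k>i$; the $j$-th component is $f_j(\vec{h})$, and because $f_j\in R_{[j]}$ with $j\le i$ depends only on $x_1,\ldots,x_j$, and these coincide with the first $j$ coordinates of $\vec{h}$, one has $f_j(\vec{h})=f_j=g_j$. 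Comparing componentwise shows the two sides agree.

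There is no real obstacle here; the whole proof is a bookkeeping exercise. The only subtlety worth emphasizing is that triangularity $f_k\in R_{[k]}$ is exactly what makes the ``absorption'' identities $f_k(\vec{h}_{k+1})=f_k$ in part (1) and $f_j(\vec{h})=f_j$ in part (2) hold, and without it neither identity would be valid.
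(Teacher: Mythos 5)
Your proof is correct, and it fills in precisely the direct verification from Definition~\ref{compose} that the paper leaves to the reader (the paper states this fact without proof, noting only that it follows easily from the definition of composition). Your emphasis on the triangularity hypothesis $f_k\in R_{[k]}$ as the reason the absorption identities $f_k(\vec{h}_{k+1})=f_k$ and $f_j(\vec{h})=f_j$ hold is exactly the right point.
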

	\begin{definition}\label{CHSequvfun}
		Let $R$ be a commutative ring, and  $f_1,\ldots,f_n;g_1,\ldots,g_n\in R_{[n]}$. Then
		\begin{enumerate}
			\item The  vector-polynomial $\vec{f}=(f_1,\ldots,f_n)$ gives rise to a  vector-polynomial function of $R^n$, $\vec{F}\colon R^n\longrightarrow R^n$ by substitution for the variables, that is,
			\[ \vec{F}(a_1,\ldots,a_n)=\vec{f}(a_1,\ldots,a_n)=(f_1(a_1,\ldots,a_n),\ldots,f_n(a_1,\ldots,a_n)),\text{   for every }(a_1,\ldots,a_n)\in R^n.\]
			If $\vec{F}$ is bijective, we call  $\vec{F}$ a vector-polynomial permutation,  $\vec{f}$ a vector-permutation polynomial and $f_i$ a permutation polynomial for $i=1,\ldots,n$.
			
			\item If $\vec{g}=(g_1,\ldots,g_n)$ and $\vec{g}(a_1,\ldots,a_n)=\vec{f}(a_1,\ldots,a_n)$ for every $(a_1,\ldots,a_n)\in R^n$, this means that $\vec{f}$ and $\vec{g}$ induce the same function $F$ on $R^n$ and we  write $\vec{f} \quv  \vec{g}$ on $R^n$.
			\item We denote by $\Vec{\mathcal{F} }(R^n)$  the set of all  vector-polynomial functions of $R^n$. 
				\end{enumerate}
	\end{definition}
	\begin{remark}
		\begin{enumerate}
			\item It is interesting to note that    every invertible vector-polynomial is a vector-permutation polynomial  and  that the converse is not always true;    however, in the case where $R$ is an algebraically closed field of characteristic zero, vector-permutation polynomials are just the invertible vector-polynomials (see \cite[Theorem 2.2]{AutAlgclosed}  and \cite[Page.~80]{Autjac}). 
			
			\item When $n>1$, a permutation polynomial $f\in R_{[n]}$ can not alone induce a permutation of $R^n$ by Definition~\ref{CHSequvfun}. However, when $n=1$, the concept of permutation polynomial and vector-permutation polynomial coincide.  Permutation polynomials in one variable and their induced permutations play a vital rule in this context. For this reason we collect the related  terminology in the following definition.
		\end{enumerate}  
	\end{remark}
	\begin{definition}\label{pertermin}
		Let $R$ be a commutative ring, and  let $f\in R[x]$ and $F\colon R \longrightarrow R$ be its induced function.
		Then 
		\begin{enumerate}
			\item $f$ is called a \emph{ permutation polynomial} whenever $F$ is a permutation of $R$. In this case, $F$ is called \emph{polynomial permutation}.
			\item By $\mathcal{MP}(R)$, we denote the monoid of permutation polynomials on $R$ consisting of all permutation polynomials.
			\item  The group of units of $\mathcal{MP}(R)$ consists of all $R$-automorphisms of $R[x]$, which we denote by $Aut_R(R[x])$.
			\item If $R$ is finite, the set of all polynomial permutations on $R$ is a finite group which we denote by $\PrPol$.
		\end{enumerate} 
	\end{definition}
	\begin{remark}\label{Gilmaut}
		In \cite{GilmmerAut}, Gilmer showed that a polynomial $f\in R[x]$ is an $R$ automorphism of $R[x]$ if and only if
		$f(x)=a_0+a_1x+\cdots+a_mx^m$ with $a_1$ is unit and $a_i$ is nilpotent for every $i\ge 2$. It follows, when   $R$ is reduced, i.e. $R$ has no non-trivial nilpotents elements, that $Aut_R(R[x])$ consists only of invertible linear polynomials.
	\end{remark}
	\begin{remark}\label{eppi}
				There is a natural  epimorphism $\pi_{n}\colon R_{[n]}^n\longrightarrow  \Vec{\mathcal{F} }(R^n)$ with respect to composition 
			which maps every vector-polynomial into its induced vector-polynomial function $\vec{F}$. 
		\end{remark}
	\begin{lemma}\label{uniqex}
		Let $x_1,\ldots, x_{k+1}$ be variables, and let $f,g, h,u\in R_{[k]}$ such that $h$ and $u$ are unit-valued polynomials. Then $f+x_{k+1}u \quv g+x_{k+1}h$  on $R$ if and only if 
		$f  \quv g $  and $ u \quv h$  on $R$.
	\end{lemma}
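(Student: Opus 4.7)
The plan is to proceed by a straightforward evaluation argument, treating $(a_1,\ldots,a_k)\in R^k$ and $a_{k+1}\in R$ as independent arguments of the polynomial functions on $R^{k+1}$. The reverse implication is immediate from Definition~\ref{CHSequvfun}: substituting the identity $f(a_1,\ldots,a_k)=g(a_1,\ldots,a_k)$ and $u(a_1,\ldots,a_k)=h(a_1,\ldots,a_k)$ into $f+x_{k+1}u$ and $g+x_{k+1}h$ gives the same value at every $(a_1,\ldots,a_{k+1})\in R^{k+1}$.

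For the forward direction, I would fix an arbitrary tuple $(a_1,\ldots,a_k)\in R^k$ and exploit the freedom in the last coordinate. First, setting $a_{k+1}=0$ in the assumed equality of functions yields $f(a_1,\ldots,a_k)=g(a_1,\ldots,a_k)$, so $f\quv g$ on $R$. Subtracting this from the original equality leaves
\[
a_{k+1}\bigl(u(a_1,\ldots,a_k)-h(a_1,\ldots,a_k)\bigr)=0\quad\text{for every }(a_1,\ldots,a_{k+1})\in R^{k+1}.
\]
Now specializing $a_{k+1}=1$ gives $u(a_1,\ldots,a_k)=h(a_1,\ldots,a_k)$, that is, $u\quv h$ on $R$.

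There is essentially no obstacle; the argument does not even use the unit-valued hypothesis on $h$ and $u$, which is included in the statement only because this is the regime in which the lemma will later be applied. The only point worth underlining in the write-up is the convention that the symbol $\quv$, as defined in Definition~\ref{predef}, is interpreted for $f+x_{k+1}u$ and $g+x_{k+1}h$ as equality of the induced functions on $R^{k+1}$ (since these polynomials lie in $R_{[k+1]}$), while for $f,g,u,h$ it means equality of functions on $R^k$. With this clarified, the evaluations at $a_{k+1}=0$ and $a_{k+1}=1$ deliver both conclusions and the proof is complete.
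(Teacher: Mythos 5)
Your proof is correct and follows essentially the same route as the paper's: evaluate at $x_{k+1}=0$ to obtain $f\quv g$, cancel, and then evaluate at $x_{k+1}=1$ to obtain $u\quv h$, with the converse being immediate. Your observation that the unit-valued hypothesis on $h$ and $u$ is not actually needed is also accurate.
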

	\begin{proof}
		Suppose that  $f+x_{k+1}u \quv g+x_{k+1}h$  on $R$. Then setting $x_{k+1}=0$ yields that
		$f  \quv g $     on $R$. Hence,   $ x_{k+1}u \quv x_{k+1}h$  on $R$. But, then $x_{k+1}=1$
		implies that		
		$ u \quv h$  on $R $. The other implication is obvious.
	\end{proof}
	\begin{fact}\label{equivfac}
		Let $R$ be a finite commutative ring. Let $\vec{F}$ be the  vector-polynomial permutation of $R^n$   induced by the vector-polynomial $\vec{f}=(f_1,\ldots,f_n)$, where $f_1,\ldots,f_n\in R_{[n]}$. If the inverse of $\vec{F}$, $\vec{F}^{-1}$, is induced by  a vector-polynomial $\vec{g}=(g_1,\ldots,g_n)$, where $g_1,\ldots,g_n\in R_{[n]}$, then
		$f_i\circ g_i\quv x_i$ on $R$ for $i=1,\dots,n$.
	\end{fact}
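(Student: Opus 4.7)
The plan is to unwind the definitions. The hypothesis says that the induced functions $\vec{F}$ and the function $\vec{G}$ induced by $\vec{g}$ satisfy $\vec{F}\circ\vec{G}=id_{R^n}$, and that is all the content one needs. Evaluating both sides at an arbitrary point $\vec{a}=(a_1,\ldots,a_n)\in R^n$ gives
\[
\vec{f}(g_1(\vec{a}),\ldots,g_n(\vec{a}))=(a_1,\ldots,a_n),
\]
so that, reading coordinates, $f_i(g_1(\vec{a}),\ldots,g_n(\vec{a}))=a_i$ for each $i=1,\ldots,n$. By Definition~\ref{compose} the left-hand side is precisely the value of the polynomial $f_i\circ\vec{g}=f_i(g_1,\ldots,g_n)$ at $\vec{a}$, and the right-hand side is the value of $x_i$ at $\vec{a}$.

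Since $\vec{a}\in R^n$ was arbitrary, the polynomials $f_i\circ\vec{g}$ and $x_i$ induce the same function on $R^n$; that is, $f_i\circ\vec{g}\quv x_i$ on $R$, which is the required conclusion (the notation $f_i\circ g_i$ in the statement is understood in the $n$-variable sense of Definition~\ref{compose}, namely as substitution of the full tuple $\vec{g}$ into the $i$-th component $f_i$). There is no real obstacle in the argument; it is a pure unpacking of the definitions of $\pi_n$, of composition of vector-polynomials, and of the equivalence relation $\quv$. It is worth noting that finiteness of $R$ plays no role in the proof itself: it enters only through the ambient setup, ensuring that the hypothesis is a reasonable one to pose (every permutation of the finite set $R^n$ can be represented by some vector-polynomial, so an $\vec{F}^{-1}$-realising $\vec{g}$ automatically exists).
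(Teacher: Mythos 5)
Your argument is correct and is precisely the definitional unpacking intended: the paper states this as a Fact with no accompanying proof, and your route --- use that $\pi_n$ respects composition, evaluate $\vec{f}\circ\vec{g}$ at an arbitrary point, and read off coordinates, with $f_i\circ g_i$ understood as $f_i(g_1,\ldots,g_n)$ --- is the only sensible one. The single slip is in your closing aside: it is \emph{not} true that every permutation of the finite set $R^n$ is induced by a vector-polynomial (already false for $R=\mathbb{Z}_4$, $n=1$); the existence of a $\vec{g}$ inducing $\vec{F}^{-1}$ is nonetheless automatic over a finite ring because $\vec{F}$ has finite order $k$ in the symmetric group on $R^n$, so $\vec{F}^{-1}$ is induced by the $(k-1)$-fold composite of $\vec{f}$ with itself --- but this remark lies outside the statement being proved and does not affect the validity of your proof.
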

	In the following, we recall the definition of  a short exact sequence of groups and its relation with the semi-direct product of groups.
	A finite sequence of groups and homomorphisms 
	$$1 \rightarrow A \xrightarrow{\alpha} B \xrightarrow{\beta} C \rightarrow 1$$ is a short exact sequence
	if $\im \alpha =\ker \beta$. Further, if $\alpha$ is the inclusion function, $i$, $B$ is an extension of $A$ by $C$. 
	This allows us to give a definition of the semi-direct product of groups by means of a short exact sequence. 
	\begin{definition}\cite[Page 760]{rotadv}\label{semidi}
		An extension  $$1 \rightarrow A \xrightarrow{i} B \xrightarrow{p} C \rightarrow 1$$ is 
		split if there exists a homomorphism $j\colon C \rightarrow B$ such that $pj=id_C$. In this case,
		the group $B$ is called the semi-direct product of $A$ by $C$, which is denoted by $A\rtimes C$.
	\end{definition} 
	
	\section{The monoid of triangular  vector-permutation polynomials}\label{sec3}
	In this section, we   give the construction of a monoid of triangular vector-polynomials. 
	Recall from Definition~\ref{predef} that $UV(R_{[k]})$ stands for the monoid of unit-valued polynomials in $k$ variables.	
	\begin{lemma}\label{closnesop}
		Let $R$ be a commutative ring. Let $\Vec{f} =
		(f_1,		f_2 +x_2g_2 ,\ldots,
		f_{n} +x_n g_{n})$
		and $\vec{h}=(h_1 ,h_2 +x_2w_2 ,\ldots,h_{n}  +x_n w_{n})$, where $f_1,h_1\in \mathcal{MP}(R)$, $f_i,h_i \in R_{[i-1]}$ and $u_i,w_i\in UV(R_{[i-1]})$ with $i=2,\ldots,n-1$.
		Then $
		\vec{f}\circ \vec{h}=(l_1,l_2+x_2r_2,\ldots,	l_{n}+x_n r_{n}),
		$
		for some $l_1\in \mathcal{MP}(R)$, $l_i\in R_{[i-1]}$ and  $r_i\in UV(R_{[i-1]})$  for $i=2,\ldots,n$.
	\end{lemma}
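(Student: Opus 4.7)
The plan is to compute $\vec{f}\circ\vec{h}$ componentwise via Definition~\ref{compose} and observe that each coordinate already has the required triangular shape after a short algebraic rearrangement. For the first coordinate, I would set $l_1 := f_1(h_1)$ and note that $\mathcal{MP}(R)$ is closed under composition (Definition~\ref{pertermin}), so $l_1 \in \mathcal{MP}(R)$ is immediate.

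For $i \ge 2$, I would introduce the abbreviations
$$\tilde{f}_i := f_i\bigl(h_1,\, h_2 + x_2 w_2,\, \ldots,\, h_{i-1} + x_{i-1} w_{i-1}\bigr), \quad \tilde{g}_i := g_i\bigl(h_1,\, h_2 + x_2 w_2,\, \ldots,\, h_{i-1} + x_{i-1} w_{i-1}\bigr).$$
Since $f_i, g_i$ involve only the variables $x_1, \ldots, x_{i-1}$ and the polynomials plugged in likewise involve only $x_1, \ldots, x_{i-1}$, both $\tilde{f}_i$ and $\tilde{g}_i$ lie in $R_{[i-1]}$. Substituting $\vec{h}$ into $f_i + x_i g_i$ then rearranges as
$$\tilde{f}_i + (h_i + x_i w_i)\tilde{g}_i = \bigl(\tilde{f}_i + h_i \tilde{g}_i\bigr) + x_i \bigl(w_i \tilde{g}_i\bigr),$$
so I would define $l_i := \tilde{f}_i + h_i \tilde{g}_i$ and $r_i := w_i \tilde{g}_i$, both members of $R_{[i-1]}$.

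The only remaining point is that $r_i$ is unit-valued on $R^{i-1}$. Evaluated at any $(a_1, \ldots, a_{i-1}) \in R^{i-1}$, the polynomial $\tilde{g}_i$ returns the value of $g_i$ at some tuple of elements of $R$, which lies in $R^\times$ by hypothesis on $g_i$; hence $\tilde{g}_i \in UV(R_{[i-1]})$. Since $UV(R_{[i-1]})$ is closed under multiplication (the induced functions take values in the multiplicative group $R^\times$), the product $r_i = w_i \tilde{g}_i$ is unit-valued. No step poses a genuine obstacle: the argument is essentially algebraic rearrangement, and the triangular structure automatically guarantees that no unwanted variables appear in $\tilde{f}_i$ or $\tilde{g}_i$, which is the only bookkeeping subtlety.
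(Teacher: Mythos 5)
Your proposal is correct and follows essentially the same route as the paper's proof: both compute the composition coordinatewise, observe that the substituted polynomials (your $\tilde f_i,\tilde g_i$, the paper's $f_i(\vec{v_i}),g_i(\vec{v_i})$) live in $R_{[i-1]}$ by the triangular structure, and conclude that $r_i=w_i\tilde g_i$ is unit-valued as a product of unit-valued polynomials. No substantive differences.
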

	\begin{proof}
		Let $\Vec{h}\in \mathcal{MT}_n$ and $\Vec{f}$ as above. Then, 		
		by Definition~\ref{compose},   
		\begin{equation}\label{prodre1}
			\vec{f}\circ \vec{h}=(l_1,l_2+x_2r_2,\ldots,	l_{n}+x_n r_{n}),
		\end{equation} where  \begin{equation}\label{prodre2}
			\begin{matrix}
				l_1(x_1)=f_1(h_1(x_1))\\
				l_i(x_1,\ldots,x_{i-1})=f_i(\vec{v_i})+h_i(x_1,\ldots,x_{i-1})g_i(\vec{v_i}),\\
				r_i(x_1,\ldots,x_{i-1})= w_i(x_1,\ldots,x_{i-1})g(\vec{v_i})\text{ and }  \\
				\vec{v_i}(x_1,\ldots,x_{i-1})=(h_1(x_1),
				h_2(x_1)+x_2w_1(x_1),\ldots,
				h_{i-1}(x_1,\ldots,x_{i-2})+x_{i-1}w_{i-1}(x_1,\ldots,x_{i-2})),\\
				i=2,\ldots,n.	
			\end{matrix}
		\end{equation} 
		But, then $l_1=f_1\circ h_1\in \mathcal{MP}(R)$ since $f_1,h_1\in \mathcal{MP}(R)$; and  for $i=2,\ldots,n$,	$r_i= w_ig(\vec{v_i})\in UV(R_{[i-1]})$ since $w_i,g(\vec{v_i})\in UV(R_{[i-1]})$.
	\end{proof}
	Recall from Remark~\ref{eppi} that $\pi_n$ is the epimorphism assigns to each vector-polynomial
  its induced function on $R^n$.
	\begin{lemma}\label{pervpol}
		Let $R$ be a commutative ring. Let $f_1\in \mathcal{MP}(R)$ and for $i=2,\ldots,n$ let
		$f_i \in R_{[{i-1}]} $  and   $g_i\in UV(R_{[{i-1}]]})$. Then the vector-polynomial:
				$\Vec{f}=
		(f_1 ,			f_2 +x_2g_2,\ldots,
		f_{n}+x_n g_{n})
		$
		induces a permutation of $R^n$, that is $\pi_n(\vec{f}) $ is a permutation of $R^n$.
	\end{lemma}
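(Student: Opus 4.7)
The plan is to exhibit an explicit inverse for the induced function $\vec{F}=\pi_n(\vec{f})$, exploiting the fact that the coordinate functions have a strictly triangular dependence on the inputs: the $i$-th coordinate depends only on $a_1,\ldots,a_i$, and depends on $a_i$ in the affine form $F_i(a_1,\ldots,a_{i-1})+a_i\,G_i(a_1,\ldots,a_{i-1})$ with $G_i$ taking only unit values.

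First I would set up notation: let $F_1$ be the permutation of $R$ induced by $f_1$, and for $i=2,\ldots,n$ let $F_i,G_i\colon R^{i-1}\to R$ be the polynomial functions induced by $f_i$ and $g_i$. By hypothesis $F_1$ is a bijection of $R$, and by Definition~\ref{predef} each $G_i$ takes values in $R^\times$. Then
\[
\vec F(a_1,\ldots,a_n)=\bigl(F_1(a_1),\;F_2(a_1)+a_2G_2(a_1),\;\ldots,\;F_n(a_1,\ldots,a_{n-1})+a_nG_n(a_1,\ldots,a_{n-1})\bigr).
\]

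Next I would show that for every target $(b_1,\ldots,b_n)\in R^n$ there is exactly one preimage under $\vec F$, by solving the system coordinate by coordinate. The first equation $F_1(a_1)=b_1$ has the unique solution $a_1=F_1^{-1}(b_1)$. Proceeding by induction on $i$, once $a_1,\ldots,a_{i-1}$ have been pinned down, the $i$-th equation reads
\[
a_i\,G_i(a_1,\ldots,a_{i-1})=b_i-F_i(a_1,\ldots,a_{i-1}),
\]
and since $G_i(a_1,\ldots,a_{i-1})\in R^\times$ it has the unique solution
\[
a_i=\bigl(b_i-F_i(a_1,\ldots,a_{i-1})\bigr)\,G_i(a_1,\ldots,a_{i-1})^{-1}.
\]
This produces $(a_1,\ldots,a_n)$ uniquely, proving $\vec F$ is bijective and hence that $\pi_n(\vec f)$ is a permutation of $R^n$.

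There is essentially no serious obstacle: the triangular shape reduces the problem to $n$ one-variable equations, the permutation hypothesis on $f_1$ handles the first, and the unit-valued hypothesis on each $g_i$ guarantees invertibility of the coefficient of $a_i$ in the $i$-th equation, so uniqueness and existence of a preimage come together. The only point that deserves care is recording that $R$ is not assumed finite, so one cannot argue by injectivity alone; the explicit construction above covers both injectivity and surjectivity simultaneously.
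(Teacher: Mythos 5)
Your proof is correct and follows essentially the same route as the paper: the paper also solves the triangular system coordinate by coordinate, using that $f_1$ permutes $R$ and that each $g_i$ evaluates to a unit, though it presents injectivity and surjectivity as two separate passes rather than your single unique-preimage argument. The two are logically equivalent, and your remark that the explicit construction is needed because $R$ may be infinite is well taken.
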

	\begin{proof}
		Set $\Vec{F}=\pi_n(\vec{f})$.	First, we show that $\Vec{F}$ is one-to-one. For this, let $a_i,b_i\in R$  for $i=1,\dots,n$ such that \[\Vec{F}(a_1,\ldots,a_n)=\Vec{f}(a_1,\ldots,a_n)=\Vec{f}(b_1,\ldots,b_n)=\Vec{F}(b_1,\ldots,b_n).\]
		Then, we have the following equations:
		\[
		\begin{matrix}
			f_1(a_1)=f_1(b_1)\\
			f_2(a_1)+a_2g_1(a_1)=f_2(b_1)+b_2g_1(b_1)  \\
					\vdots\\
			f_{n}(a_1,\ldots,a_{n-1})+a_n g_{n}(a_1,\ldots,a_{n-1})=f_{n}(b_1,\ldots,b_{n-1})+b_n g_{n}(b_1,\ldots,b_{n-1})
		\end{matrix}
		\] 
		Thus, from the first equation, since $f_1$ is a permutation polynomial on $R$, we have  $a_1=b_1$.
		Then, the second equation becomes $ a_2g_2(a_1)= b_2g_2(a_1)$. But, then $a_2=b_2$ as $g_2(a_1)$ is
		a unit  since $g_2$  is a unit-valued polynomial on $R$. Continuing in this manner, we find that
		$a_i=b_i$, $1\le i\le n-1$ and the $i+1$th equation is reduced to 
		$ a_{i+1} g_{i+1}(a_1,\ldots,a_{i})=b_{i+1} g_{i+1}(a_1,\ldots,a_{i})$. Hence, $a_{i+1}=b_{i+1}$ since  $g_{i+1}$ is a unit-valued polynomial. Therefore,  $a_i=b_i$ for $i=1,\dots,n$. 
		To show $\Vec{F}$ is surjective, let $(c_1,\ldots,c_n)\in R^n$ be arbitrary. Then, we solve the equation   \[ \Vec{f}(x_1,\ldots,x_n)= (c_1,\ldots,c_n).\]
		Therefore, we have the following system of $n$ equations:		
		\[		\begin{matrix}
			f_1(x_1)=c_1\\
			f_2(x_1)+x_2g_2(x_1)=c_2  \\
			\vdots\\
			f_{n}(x_1,\ldots,x_{n-1})+x_n g_{n}(x_1,\ldots,x_{n-1})=c_n.
		\end{matrix}
		\] Since $f_1$ is a permutation on $R$, $x_1=a_1=F_1^{-1}(c_1)$  is the   solution of the 1st equation, where $F_1^{-1}$ stands for the inverse permutation of the permutation obtained by $f_1$ on $R$. So, the second equation becomes $f_2(a_1)+x_2g_2(a_1)=c_2$. Since $g_2$ is a unit-valued on $R$,
		$g_2(a_1)$ is invertible in $R$. Thus, $x_2=a_2=(c_2-f_2(a_1)) {g_2(a_1)} ^{-1}$. Solving these equations and the fact that $g_i$ is a unit valued on $R$  yield  \[x_i=a_i=(c_{i-1}-f_{i}(a_1,\ldots,a_{i-1})) {g_{i}(a_1,\ldots,a_{i-1})} ^{-1}\text{, }i=2,\ldots,n. \qedhere\]
	\end{proof}
	\begin{theorem}\label{constmon}\label{Constgr}
		Let $R$ be a  commutative ring with unity and   $n>1$  an integer. Let $\mathcal{MT}_n$ be the set of all vector-polynomials of the form: 
		\begin{equation}\label{vectel}		
			\Vec{f}(x_1,\ldots,x_n)=
			(f_1(x_1),			f_2(x_1)+x_2g_2(x_1),\ldots,
			f_{n}(x_1,\ldots,x_{n-1})+x_n g_{n}(x_1,\ldots,x_{n-1})),
		\end{equation}
		where   $f_1$ is a permutation polynomial
		on $R$, $f_i\in R_{[{i-1}]} $ and   $g_i\in UV(R_{[{i-1}]]})$   for $i=2,\ldots, n$. Then $\mathcal{MT}_n$ is a monoid of vector-permutation polynomials with respect to composition.
		Moreover, if $R$ is finite, then $\pi_n(\mathcal{MT}_n) $ is a group of permutations of $R^n$.
		\end{theorem}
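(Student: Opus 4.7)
My plan is to assemble the theorem from the two preceding lemmas together with a short check that the monoid axioms hold and a standard finiteness argument for the group claim.

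First, I would verify that $\mathcal{MT}_n$ is a submonoid of $(R_{[n]}^n,\circ)$. Associativity of $\circ$ holds on the ambient set $R_{[n]}^n$ (it is the opposite of composition of the induced $R$-endomorphisms of $R_{[n]}$, up to the convention fixed in Definition~\ref{compose}), so it is inherited by any subset. The identity vector-polynomial $(x_1,\ldots,x_n)$ lies in $\mathcal{MT}_n$: take $f_1(x_1)=x_1\in\mathcal{MP}(R)$, $f_i=0\in R_{[i-1]}$, and $g_i=1\in UV(R_{[i-1]})$ for $i=2,\ldots,n$. Closure under $\circ$ is exactly the content of Lemma~\ref{closnesop}: the composition of two elements of the prescribed triangular shape is again of that shape, with the first component a composition of two permutation polynomials on $R$ (hence in $\mathcal{MP}(R)$) and the new $r_i$'s products of two unit-valued polynomials (hence in $UV(R_{[i-1]})$, which is a submonoid by the Fact following Definition~\ref{predef}).

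Second, by Lemma~\ref{pervpol}, each $\vec f\in\mathcal{MT}_n$ induces a permutation $\pi_n(\vec f)$ of $R^n$, so $\mathcal{MT}_n$ is indeed a monoid of vector-permutation polynomials as defined in Definition~\ref{CHSequvfun}.

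For the last assertion, suppose $R$ is finite. Since $\pi_n$ is a homomorphism of monoids (Remark~\ref{eppi}), $\pi_n(\mathcal{MT}_n)$ is a submonoid of the full transformation monoid on $R^n$, and by the previous paragraph it is contained in the (finite) symmetric group $\mathrm{Sym}(R^n)$. Any finite submonoid of a group is itself a group: for each $\vec F\in\pi_n(\mathcal{MT}_n)$ the powers $\vec F^k$ eventually repeat, giving $\vec F^{k}=\vec F^{k+m}$ with $m\ge 1$, and cancelling in $\mathrm{Sym}(R^n)$ yields $\vec F^{m}=\mathrm{id}_{R^n}$, so $\vec F^{m-1}\in\pi_n(\mathcal{MT}_n)$ is the inverse of $\vec F$. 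Hence $\pi_n(\mathcal{MT}_n)$ is a group of permutations of $R^n$.

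There is really no main obstacle here: the two lemmas do all the substantive work, and what remains is to spell out the identity and the finite-monoid-in-a-group argument. The only point that deserves a small remark is the convention about the direction of composition (Definition~\ref{compose}), to make sure the identity is $(x_1,\ldots,x_n)$ and the first coordinate of $\vec f\circ\vec h$ really is $f_1\circ h_1$ in the $\mathcal{MP}(R)$ sense used in Lemma~\ref{closnesop}.
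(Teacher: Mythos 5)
Your proof is correct and follows essentially the same route as the paper: identity and associativity are immediate, closure is Lemma~\ref{closnesop}, the permutation property is Lemma~\ref{pervpol}, and the group claim follows because a finite subset of $S_{R^n}$ closed under composition is a subgroup. The only difference is that you spell out the finite-submonoid-in-a-group argument explicitly, which the paper leaves implicit.
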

	\begin{proof}
			By Lemma~\ref{pervpol}, $\mathcal{MT}_n$ consists of vector-permutation polynomials.
		To show $\mathcal{MT}_n$ is a monoid with respect to the composition $``\circ"$ we only need to show  that $\mathcal{MT}_n$ is closed with respect to $``\circ"$ 	since the identity  $  (x_1,x_2,\ldots,x_n)\in \mathcal{MT}_n$ and  composition is always associative (see Definition~\ref{compose}).  
		But, this is the case by Lemma~\ref{closnesop}.
		
		Moreover, if $S_{R^n}$ is the symmetric group on  $R^n$, then the first part of the proof shows that   $\pi_n(\mathcal{MT}_n)\subseteq S_{R^n} $.
		Since we consider the case $R$ is finite, $\pi_n(\mathcal{MT}_n)$ is a finite subset of the finite group
		$S_{R^n}$. Thus, we need only show that $\pi_n(\mathcal{MT}_n)$ is closed with respect to composition.
		Now, let $\vec{F},\vec{G}\in \pi_n(\mathcal{MT}_n)$. Then there exist $\vec{f},\vec{g}\in  \mathcal{MT}_n$ such that $\vec{F}=\pi_n(\vec{f})$ and $\vec{G}=\pi_n(\vec{g})$. Therefore,
		\[\vec{F}\circ\vec{G}=\pi_n(\vec{f})\circ\pi_n(\vec{g})=\pi_n(\vec{f} \circ \vec{g})\in \pi_n(\mathcal{MT}_n).\qedhere\]
	\end{proof}
	As a direct result, we have the following corollary.
	\begin{corollary}
		Let $R$ be a finite commutative ring.	Let $k>1$ and $f,g\in R_{[k]}$. Suppose that 
		$f$ is  unit-valued. Then $g(x_1,\ldots,x_k)+x_{k+1}f(x_1,\ldots,x_k)$ is a  permutation polynomial of $i$ variables for every $i\ge k+1$.
	\end{corollary}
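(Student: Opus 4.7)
The plan is to reduce the corollary directly to Theorem~\ref{constmon} by realising the polynomial $h=g(x_1,\ldots,x_k)+x_{k+1}f(x_1,\ldots,x_k)$ as one coordinate of a triangular vector-polynomial of the shape~\eqref{vectel}. Fix $i\ge k+1$ and I would consider the vector-polynomial in $i$ variables
\[
\vec{h}=(x_1,\;x_2,\;\ldots,\;x_k,\;g+x_{k+1}f,\;x_{k+2},\;\ldots,\;x_i).
\]

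The next step is a coordinate-by-coordinate check that $\vec{h}\in\mathcal{MT}_i$. The first coordinate $x_1$ is the identity polynomial, which is trivially in $\mathcal{MP}(R)$. Each coordinate $j\in\{2,\ldots,k\}\cup\{k+2,\ldots,i\}$ can be written as $0+x_j\cdot 1$ with $0\in R_{[j-1]}$ and the constant $1\in UV(R_{[j-1]})$. The $(k+1)$-th coordinate is $g+x_{k+1}f$ with $g\in R_{[k]}$ and, by hypothesis, $f\in UV(R_{[k]})$. Hence $\vec{h}$ satisfies the defining conditions of $\mathcal{MT}_i$ displayed in~\eqref{vectel}.

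Applying Theorem~\ref{constmon} then yields that $\vec{h}$ is a vector-permutation polynomial of $R^i$, and therefore each of its components is a permutation polynomial in $i$ variables in the sense of Definition~\ref{CHSequvfun}. Since the $(k+1)$-st component is exactly $g+x_{k+1}f$, the claim follows for every $i\ge k+1$. There is no genuine obstacle in this argument: the corollary is essentially a repackaging of Theorem~\ref{constmon} once one recognises the correct embedding into a larger triangular vector-polynomial. The only minor verification is that the padding slots $x_j$ legitimately fit the triangular template with $f_j=0$ and $g_j=1$, and that the constant polynomial $1$ lies in $UV(R_{[j-1]})$, both of which are immediate.
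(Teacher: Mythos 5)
Your proof is correct and is precisely the argument the paper intends: the corollary is stated as a direct consequence of Theorem~\ref{constmon}, obtained by embedding $g+x_{k+1}f$ as the $(k+1)$-st coordinate of a triangular vector-polynomial in $i$ variables (padding with identity coordinates $0+x_j\cdot 1$) and invoking Definition~\ref{CHSequvfun}. No gaps; your verification that the padded coordinates fit the template of~(\ref{vectel}) is exactly the ``direct result'' the author had in mind.
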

	\begin{remark}\label{comparemogr}
		The finiteness condition on $R$ in the last part of Theorem~\ref{constmon} is sufficient  for $\pi_n(\mathcal{MT})_n$ to be a group.  However, if we  consider the case in which $R$ is an infinite field, then $\pi_n(\mathcal{MT}_n)\cong   \mathcal{MT}_n$  because  if   $f\in R_{[k]} $ ($k\ge 1$) maps $R^k$ onto zero then $f$ is necessarily the constant zero polynomial. Therefore, we notice the following  examples:
		\begin{enumerate}
			\item Let $R$ be a real closed field (for example the field of real numbers and the field of real algebraic numbers see~\cite{Vanclosf}). Then $x^{2k+1}$ is a permutation   polynomial on $R$ for every $k\ge 1$ by \cite[Theorem~9]{nolineaper}. So, it is evident that $(x_1^{2k+1},x_2,x_3,\ldots,x_n)$ is a non-invertible element in $\mathcal{MT}_n$ for $k\ge 1$.  Hence,  $\pi_n(\mathcal{MT}_n)$ is not a group since $\pi_n((x_1^{2k+1},x_2,x_3,\ldots,x_n))$ is a non-invertible element in $\pi_n(\mathcal{MT}_n)$ for $k\ge 1$.
			\item Let $R$ be an algebraically closed field of characteristic zero. Then $\pi_n(\mathcal{MT}_n)$ is an infinite group, because  in this case,   $\mathcal{MT}_n$ is a group (we see this later in Proposition~\ref{algcase}).
			\item Let  $R$ be an algebraically closed field of characteristic $p$. Then $ax^{p^t}$ is a permutation polynomial on $R$ for every $a\ne 0$ and $t\ge 1$ by 
			\cite[Theorem~7]{nonlinerpch}. Hence, $(ax_1^{p^t},x_2,x_3,\ldots,x_n)$ is a non-invertible element in $\mathcal{MT}_n$ for every $a\ne 0$ and $t\ge 1$.  Therefore,  $\pi_n(\mathcal{MT}_n)$ is not a group since $\pi_n((ax_1^{p^t},x_2,x_3,\ldots,x_n))$ is a non-invertible element in $\pi_n(\mathcal{MT}_n)$ provided that $a\ne 0$ and $t\ge1$.\label{nonliperm}
		\end{enumerate} 
	\end{remark}
	
	\begin{definition}
		Let $n>1$. We call the monoid $\mathcal{MT}_n$ constructed in Theorem~\ref{constmon}  the monoid of triangular  vector-permutation polynomials or more briefly the triangular monoid.  
	\end{definition}
	\begin{notation}\label{MTSP}
		We set $\mathcal{MT}_1$ to be the  monoid $\mathcal{MP}(R)$ of permutation  polynomials on $R$.
	\end{notation}
	Our next aim is to describe the group of units of the monoid of  vector-permutation polynomials constructed 
	in Theorem~\ref{constmon}. Before  doing so, we need to fix some notation.

	\begin{notation}\label{relations}
		\begin{enumerate}
						\item  Let $(1:f)$ denote $(f(x_1),x_2,\ldots, x_n) $, where $f\in R[x_1]$ is a permutation polynomial on $R$.
			\item For $i>1$, let $(i:u;f)$ denote $ (x_1 ,\ldots, x_{i-1},f +x_iu,x_{i+1},\ldots,x_n   ) $, where $f,u\in R_{[i-1]}$ such that  $u\in UV(R_{[i-1]})$ (i.e. $u$ is a unit-valued polynomial).
			\item  When there is no confusion we write $\Vec{f}\Vec{g}$ instead of $\Vec{f}\circ\Vec{g}$. 
		\end{enumerate}
	\end{notation}
	\begin{remark}\label{deco}
		\begin{enumerate}
			\item 
			
			Let $\Vec{f} \in\mathcal{MT}_n$. By definition,  $\Vec{f}$ has the form ~(\ref{vectel}) in Theorem~\ref{Constgr}. Then in view of Notation~\ref{relations}, we have by Fact~\ref{tricalc},
			\begin{equation*}
				\Vec{f}=(1:f_1)\circ  (2:u_2;f_2)\circ\cdots\circ (n:u_n;f_n), \text{or simply we write}
			\end{equation*}
			\begin{equation}\label{decexp}
				\Vec{f}=(1:f_1)\prod_{i=2}^{n}(i:u_i;f_i),
			\end{equation}
			where  $f_1\in R[x_1]$ is a permutation polynomial  and    $f_i,u_i\in R_{[i-1]}$   such that  $ u_i $  is a unit-valued polynomial for $i=2,\ldots, n$. Clearly, that the expression ~(\ref{decexp}) is uniquely determined by the expression~(\ref{vectel})   
			\item Again in view of Notation~\ref{relations}, we see that for $i>1$,
			$(1:x_1)=(i: 1;0)=(x_1,\ldots,x_n)$ is the identity element of $\mathcal{MT}_n$.
			\item Let $\Vec{g}=(1:g_1)\prod_{i=2}^{n}(i:w_i; g_i)$. To obtain an expression of $\vec{g}\vec{f}$  similar to expression~(\ref{decexp}), we adopt Equations~(\ref{prodre1}) and ~(\ref{prodre2}) to get  
			
			\begin{equation}\label{prodexpres}
				\vec{g}\vec{f}=(1:g_1\circ f_1)\prod_{i=2}^{n}(i:u_i w_i(\vec{v_i});g_i(\vec{v_i})+ w_i(\vec{v_i})f_i), \text{ where }
			\end{equation} 
			\begin{align*}
				\vec{v_i}(x_1,\ldots,x_{i-1})&=(f_1(x_1),
				f_2(x_1)+x_2u_2(x_1),\ldots,
				f_{i-1}(x_1,\ldots,x_{i-1})+x_{{i-1}}u_{i-1}(x_1,\ldots,x_{i-2}));\\
				\quad i=2,\ldots,n.
			\end{align*}
			We notice here that $w_i(\vec{v_i})$ is a unit-valued polynomial since $w_i$ is a unit-valued polynomial. So, expression~(\ref{prodexpres}) can be written in the expression ~(\ref{decexp}) by choosing $h_1=g_1\circ f_1$, 
		\end{enumerate}
	\end{remark}
	The following Lemmas are straightforward.
	\begin{lemma}\label{relat1}
		Let $f,g \in R[x_1]$ be permutation polynomials. Then
		\begin{enumerate}
			\item $(1:f )(1:g)= (1:f\circ g)$;
			\item $\Vec{f} = (1:f)$ is a unit (invertible) in $\mathcal{MT}_n$  if and only if $f$ is invertible (with respect to the composition of polynomials) if and only if $f$ is an $R$-automorphism of $R[x_1]$. In this case
			$\Vec{f}^{-1}=(1:f^{-1})$, where $f^{-1}$ is the inverse of $f$ with respect to composition.
		\end{enumerate} 
	\end{lemma}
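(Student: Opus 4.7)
The plan is to prove both parts by direct computation using Definition~\ref{compose} and to invoke Gilmer's characterization of $R$-automorphisms of $R[x_1]$ recalled in Remark~\ref{Gilmaut}.

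For part (1), I would simply unfold the notation: $(1:f) = (f(x_1), x_2, \ldots, x_n)$ and $(1:g) = (g(x_1), x_2, \ldots, x_n)$. Applying Definition~\ref{compose} coordinate by coordinate, the first entry becomes $f(g(x_1)) = (f\circ g)(x_1)$, and for $i \geq 2$ the $i$-th entry of $(1:f)$ is the polynomial $x_i$, whose value at $(1:g)$ is just the $i$-th coordinate of $(1:g)$, namely $x_i$ again. Hence $(1:f)(1:g) = (1:f\circ g)$.

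For part (2), the easy direction is ($\Leftarrow$): if $f$ has a composition inverse $f^{-1}$ in $R[x_1]$, then $f^{-1}$ is itself a permutation polynomial, so $(1:f^{-1}) \in \mathcal{MT}_n$, and part (1) immediately gives $(1:f)(1:f^{-1}) = (1:f\circ f^{-1}) = (1:x_1)$ and symmetrically the other composition, which is the identity of $\mathcal{MT}_n$ by Remark~\ref{deco}(2). For the converse, suppose $(1:f)$ is invertible with inverse $\vec{h} \in \mathcal{MT}_n$. Using the canonical form~(\ref{decexp}), write
\[
\vec{h} = (h_1,\, h_2 + x_2 u_2,\, \ldots,\, h_n + x_n u_n)
\]
with $h_1$ a permutation polynomial on $R$, $h_i \in R_{[i-1]}$ and $u_i \in UV(R_{[i-1]})$. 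Computing $(1:f)\circ \vec{h}$ from Definition~\ref{compose}, the first coordinate is $f(h_1(x_1))$ and for $i \geq 2$ the $i$-th coordinate is $h_i + x_i u_i$. Equating this to the identity $(x_1,\ldots,x_n)$ as polynomials and matching the coefficient of $x_i$ for $i \geq 2$ forces $u_i = 1$ and $h_i = 0$, so $\vec{h} = (1:h_1)$, while matching the first coordinate gives $f\circ h_1 = x_1$ in $R[x_1]$. Computing $\vec{h}\circ (1:f)$ symmetrically yields $h_1 \circ f = x_1$, so $h_1$ is a two-sided composition inverse of $f$.

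The final equivalence with $f$ being an $R$-automorphism of $R[x_1]$ is exactly Gilmer's theorem as stated in Remark~\ref{Gilmaut}, and the explicit formula $\vec{f}^{-1} = (1:f^{-1})$ has been read off along the way. There is no real obstacle here; the only point requiring mild care is the polynomial identification $h_i + x_i u_i = x_i$ in $R_{[i]}$, which is valid because $h_i$ and $u_i$ involve only $x_1, \ldots, x_{i-1}$, so the $x_i^0$ and $x_i^1$ coefficients can be read off unambiguously.
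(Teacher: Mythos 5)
Your proof is correct and is precisely the routine verification the paper has in mind: the paper offers no proof of Lemma~\ref{relat1}, simply declaring it (together with Lemma~\ref{relat2}) straightforward, and your coordinatewise computation from Definition~\ref{compose} plus the identification of the inverse's canonical form is exactly what ``straightforward'' is standing in for. One cosmetic remark: the final equivalence with $f$ being an $R$-automorphism of $R[x_1]$ is definitional (invertibility under composition is what ``$R$-automorphism'' means here, cf.\ Definition~\ref{pertermin}), so the appeal to Gilmer's coefficient characterization in Remark~\ref{Gilmaut} is not actually needed.
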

	
	\begin{lemma}\label{relat2}
		Let $i>1$ and $f,g,u,v\in R_{[i-1]}$ such that $u$ and $v$ are unit-valued polynomials.
		Then
		\begin{enumerate}
			\item $(i:u;f) (i:v;g)= (i:uv;f+ug)$;
			\item $\Vec{f}=(i\!:u;f)$ is a unit   in $\mathcal{MT}_n$  if and only if $u$ is a unit of the polynomial ring $R_{[i-1]}$ (with respect to multiplication). In this case, $\Vec{f}^{-1}=(i:u^{-1};-u^{-1}f)$, where $u^{-1}$ is the inverse of $u$ in the ring $R_{[i-1]}$ (with respect to multiplication). 		 
		\end{enumerate}
	\end{lemma}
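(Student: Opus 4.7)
The plan is to prove part (1) by a direct computation from Definition~\ref{compose}, and then deduce part (2) by applying part (1) both to exhibit an explicit inverse when $u$ is multiplicatively invertible in $R_{[i-1]}$ and, conversely, to extract this invertibility from the existence of any inverse in $\mathcal{MT}_n$.

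For part (1), I would unfold $(i:u;f)\circ(i:v;g)$ coordinate by coordinate via Notation~\ref{relations}. The second factor has $j$-th coordinate $x_j$ for $j\ne i$ and $g+x_iv$ for $j=i$. Substituting these into the components of $(x_1,\ldots,x_{i-1},f+x_iu,x_{i+1},\ldots,x_n)$, each coordinate with $j\ne i$ remains $x_j$, while the $i$-th coordinate becomes $f(x_1,\ldots,x_{i-1})+(g+x_iv)\,u(x_1,\ldots,x_{i-1})=(f+ug)+x_i(uv)$, using that $f$ and $u$ do not involve $x_i$. This is precisely $(i:uv;f+ug)$.

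For the ``if'' direction of part (2), assume $u\in R_{[i-1]}^{\times}$. Then $u^{-1}$ is automatically unit-valued, so $(i:u^{-1};-u^{-1}f)\in\mathcal{MT}_n$. Applying part (1) in both orders gives
\[
(i:u;f)(i:u^{-1};-u^{-1}f)=(i:1;\,f-f)=(i:1;0),
\]
and similarly $(i:u^{-1};-u^{-1}f)(i:u;f)=(i:1;0)$. By Remark~\ref{deco}(2) this is the identity of $\mathcal{MT}_n$, proving invertibility with the claimed inverse.

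For the ``only if'' direction, suppose $(i:u;f)$ has an inverse $\vec h=(H_1,\ldots,H_n)\in\mathcal{MT}_n$. From $(i:u;f)\circ \vec h=(x_1,\ldots,x_n)$ and the substitution rule, I would read off $H_j=x_j$ for $j\ne i$ and $f(x_1,\ldots,x_{i-1})+H_i\,u(x_1,\ldots,x_{i-1})=x_i$. Since $\vec h\in\mathcal{MT}_n$, the $i$-th coordinate has the triangular form $H_i=h_i+x_iw_i$ with $h_i,w_i\in R_{[i-1]}$ and $w_i$ unit-valued. The relation $(h_i+x_iw_i)u=x_i-f$ in $R_{[i-1]}[x_i]=R_{[i]}$ then forces, by comparison of coefficients of $x_i$, the two identities $w_iu=1$ and $h_iu=-f$ in $R_{[i-1]}$. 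The first shows $u$ is a unit of $R_{[i-1]}$ with inverse $w_i$; the second then gives $h_i=-u^{-1}f$, so $\vec h=(i:u^{-1};-u^{-1}f)$, as required. The only delicate step is this last extraction; everything hinges on the triangular form forced on $\vec h$ by $\mathcal{MT}_n$ together with the uniqueness of coefficients in $R_{[i-1]}[x_i]$.
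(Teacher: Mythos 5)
Your proof is correct. The paper itself states Lemma~\ref{relat2} without proof (it is introduced with ``The following Lemmas are straightforward''), and your computation --- part (1) by direct substitution using that $f,u$ do not involve $x_i$, the ``if'' direction of (2) by exhibiting $(i:u^{-1};-u^{-1}f)$ via part (1), and the ``only if'' direction by comparing coefficients of $x_i$ in $R_{[i-1]}[x_i]$ to extract $w_iu=1$ --- is exactly the intended argument, carried out carefully and with no gaps.
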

	
	By means   of Notation~\ref{relations}, Remark~\ref{deco}, Definition~\ref{CHSequvfun} and Lemma~\ref{uniqex}, we get   the following useful lemma.
	
	\begin{lemma}\label{equivrela}
		Let $R$ be a finite commutative ring. Let $f_1, g_1\in R[x_1]$ be permutation polynomials on $R$, and 
		$f_i,g_i,v_i,u_i\in R_{[i-1]}$   such that  $ u_i $ and $v_i$ are unit-valued polynomials on $R$    for $i=2,\ldots, n$. The following statements are equivalent
		\begin{enumerate}
			\item $  (1:f_1)\prod_{i=2}^{n}(i:u_i;f_i)\quv (1:g_1)\prod_{i=2}^{n}(i:v_i;g_i)$ on $R^n$;
			\item $(1:f_1)\quv (1:g_1)$ and $(i:u_i;f_i)\quv (i:v_i;g_i)$ on $R^n$ for $i=2,\dots,n$;
			\item $ f_1 \quv g_1$, $ f_i \quv  g_i $ and $ u_i \quv  v_i $ on $R$ for $i=2,\dots,n$.
		\end{enumerate}
	\end{lemma}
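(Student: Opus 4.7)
My strategy is to establish the cycle $(1)\Rightarrow(3)\Rightarrow(2)\Rightarrow(1)$. The central idea is to convert statement $(1)$, which is about an equivalence of compositions of many triangular factors, into a coordinate-by-coordinate equivalence; the hard step then reduces to a single application of Lemma~\ref{uniqex} per coordinate.

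First I would invoke Remark~\ref{deco}(1) (which in turn unpacks via Fact~\ref{tricalc}) to write both products in their canonical vector-polynomial form, namely $(f_1,\, f_2 + x_2 u_2,\, \ldots,\, f_n + x_n u_n)$ and $(g_1,\, g_2 + x_2 v_2,\, \ldots,\, g_n + x_n v_n)$. Under this identification, statement $(1)$ becomes: for each $i$, the $i$-th coordinates of the two vector-polynomials induce the same function on $R^n$. The first coordinate depends only on $x_1$, yielding $f_1 \quv g_1$ on $R$ at once. For $i \geq 2$, the $i$-th coordinate equivalence reads $f_i + x_i u_i \quv g_i + x_i v_i$, which is precisely the hypothesis of Lemma~\ref{uniqex} with polynomial variables $x_1, \ldots, x_{i-1}$ and auxiliary variable $x_i$; the conclusion then gives $f_i \quv g_i$ and $u_i \quv v_i$ on $R$. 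This proves $(1)\Rightarrow(3)$ and is the only substantive step.

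The remaining implications are essentially bookkeeping. $(3)\Rightarrow(2)$ is immediate: $f_1 \quv g_1$ forces $(1:f_1) \quv (1:g_1)$, and for $i \geq 2$ the equivalences $f_i \quv g_i$ and $u_i \quv v_i$ make the $i$-th coordinates of $(i:u_i;f_i)$ and $(i:v_i;g_i)$ agree, while all other coordinates coincide as $x_j$. For $(2)\Rightarrow(1)$, I would appeal to Remark~\ref{eppi}: since $\pi_n$ is a homomorphism with respect to composition, equivalence of each corresponding pair of factors on $R^n$ translates directly into equivalence of their compositions. The main obstacle, such as it is, is correctly peeling off the triangular structure so that Lemma~\ref{uniqex} can be applied coordinate by coordinate; once this is done, the argument is mechanical.
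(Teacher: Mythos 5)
Your proof is correct and follows essentially the same route the paper intends: the paper gives no detailed argument but states the lemma follows from Notation~\ref{relations}, Remark~\ref{deco}, Definition~\ref{CHSequvfun} and Lemma~\ref{uniqex}, which is exactly the coordinate-by-coordinate reduction and single application of Lemma~\ref{uniqex} per coordinate that you carry out.
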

	\begin{theorem}\label{Trele}
		Let $R$ be a   commutative ring. Then the group of units of   the monoid of triangular vector-permutation polynomials $\mathcal{MT}_n$ consists  of all elements of the form 	\[\Vec{f}=(1:f_1)\prod_{i=2}^{n}(i:u_i;f_i),  \]
		where  $f_1\in R[x_1]$ is an $R$-automorphism,   $f_i\in R_{[i-1]}$  and $ u_i \in  R_{[i-1]}^\times$   for $i=2,\ldots, n$.
	\end{theorem}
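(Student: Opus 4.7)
The plan is to use the unique factorization from Remark~\ref{deco}: every $\Vec{f}\in\mathcal{MT}_n$ can be written uniquely as $(1:f_1)\prod_{i=2}^{n}(i:u_i;f_i)$, where $f_1$ is a permutation polynomial on $R$ and each $u_i$ is unit-valued. The strategy is to read off the units of $\mathcal{MT}_n$ factor by factor, using Lemmas~\ref{relat1} and~\ref{relat2} to identify which single-index elements $(1:f_1)$ and $(i:u_i;f_i)$ are invertible, and then to patch these together via the multiplication law~(\ref{prodexpres}).

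For the easy direction, I assume $f_1$ is an $R$-automorphism of $R[x_1]$ and $u_i\in R_{[i-1]}^{\times}$ for each $i\ge 2$. Then by Lemma~\ref{relat1}(2), $(1:f_1)$ is a unit of $\mathcal{MT}_n$ with inverse $(1:f_1^{-1})$, and by Lemma~\ref{relat2}(2), each $(i:u_i;f_i)$ is a unit of $\mathcal{MT}_n$ with inverse $(i:u_i^{-1};-u_i^{-1}f_i)$. Since the units of a monoid are closed under composition, the product $(1:f_1)\prod_{i=2}^{n}(i:u_i;f_i)$ is a unit.

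For the converse, suppose $\Vec{f}=(1:f_1)\prod_{i=2}^{n}(i:u_i;f_i)$ is invertible in $\mathcal{MT}_n$, with inverse $\Vec{g}=(1:g_1)\prod_{i=2}^{n}(i:w_i;g_i)$ (also written in its canonical form). I would expand $\Vec{g}\circ\Vec{f}$ by~(\ref{prodexpres}) and compare its canonical form with that of the identity $(1:x_1)\prod_{i=2}^{n}(i:1;0)$. The uniqueness part of Remark~\ref{deco} then forces the equalities
\[
g_1\circ f_1=x_1,\qquad u_i\cdot w_i(\Vec{v_i})=1\ \text{in}\ R_{[i-1]},\qquad g_i(\Vec{v_i})+w_i(\Vec{v_i})f_i=0,
\]
for $i=2,\ldots,n$. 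Symmetrically, computing $\Vec{f}\circ\Vec{g}$ yields $f_1\circ g_1=x_1$, so $f_1$ is a two-sided inverse for $g_1$ under composition, hence $f_1\in Aut_R(R[x_1])$. The middle identity shows that the unit-valued polynomial $u_i$ admits a multiplicative inverse in $R_{[i-1]}$, namely $w_i(\Vec{v_i})$, so $u_i\in R_{[i-1]}^{\times}$ as required.

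The only step that needs any care is the extraction of the three component equations from the product formula, because the entries of $\Vec{v_i}$ depend on $f_1,\ldots,f_{i-1}$ and $u_2,\ldots,u_{i-1}$; however, since we only need the \emph{existence} of the multiplicative inverse for $u_i$ (not an explicit formula), this is harmless, and the remaining conclusions follow directly from the uniqueness of the canonical decomposition in Remark~\ref{deco}. No subtlety arises from passing between polynomials and functions, because the argument is carried out entirely at the level of vector-polynomials.
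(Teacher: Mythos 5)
Your proposal is correct and follows essentially the same route as the paper's proof: both directions rest on expanding $\vec{g}\circ\vec{f}$ via Equation~(\ref{prodexpres}), comparing with the canonical form of the identity to extract $g_1\circ f_1=x_1$ and $u_iw_i(\vec{v_i})=1$, and on Lemmas~\ref{relat1} and~\ref{relat2} for the converse. The only (harmless) difference is that you also compute $\vec{f}\circ\vec{g}$ to get two-sided compositional invertibility of $f_1$ explicitly, whereas the paper deduces this from one side using the fact that left, right, and two-sided invertibility coincide here.
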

	\begin{proof}
		Let $ \Vec{f}\in \mathcal{MT}_n$. Then, by Remark~\ref{deco},
		$\Vec{f}=(1:f_1)\prod_{i=2}^{n}(i:u_i;f_i)$ where $f_1\in \mathcal{MP}(R)$, $f_i\in R_{[i-1]}$ and $u_i\in UV(R_{[i-1]})$.
		First, suppose that $\vec{f}$ is a unit in $\mathcal{MT}_n$. Then there exists $\vec{g}=(1:g_1)\prod_{i=2}^{n}(i:w_i; g_i)$, where $g_1\in \mathcal{MP}(R)$, $g_i\in R_{[i-1]}$ and $w_i\in UV(R_{[i-1]})$ with $\vec{g}\vec{f}=(x_1,\ldots,x_n)$. By Remark~\ref{deco} (Equation~(\ref{prodexpres})), \begin{equation*} 
			\vec{g}\vec{f}=(1:g_1\circ f_1)\prod_{i=2}^{n}(i:u_i w_i(\vec{v_i});g_i(\vec{v_i})+ w_i(\vec{v_i})f_i), \text{ where } \vec{v_i}\in R_{[i-1]}^{i-1} \text{ for }i=2,\ldots,n.
		\end{equation*}
		Again by Remark~\ref{deco}, we have the following equalities
		$$\begin{matrix}
			g_1\circ f_1=x_1\\
			g_i(\vec{v_i})+ w_i(\vec{v_i})f_ix_i=x_i \text{ for }i=2,\ldots,n.
		\end{matrix}$$
		The first equality implies that $f_1$ is an $R$-automorphism. From the second equality,
		we infer  that $w_i(\vec{v_i})f_i=1$ since $g_i(\vec{v_i}),w_i(\vec{v_i})$ and  $f_i$ are independent from the variable $x_i$ for $i=2,\ldots,n$. Thus, $f_i\in R_{[i-1]}^\times$.
		Conversely, assume that $f_1\in R[x_1]$ is an $R$-automorphism,   $f_i\in R_{[i-1]}$  and $ u_i \in   R_{[i-1]}^\times$   for $i=2,\ldots, n$.
		Then  by construction and Lemma~\ref{relat1}	 and Lemma~\ref{relat2}, $\Vec{f}$ is invertible being
		product (composition) of units in $\mathcal{MT}_n$. 
	\end{proof} 
	From Theorem~\ref{Trele}, Lemma~\ref{relat1} and Lemma~\ref{relat2}  we have the following result.
	\begin{corollary}
		Let $\Vec{f}\in \mathcal{MT}_n$. Suppose that  $\Vec{f}=(1:f_1)\prod_{i=2}^{n}(i:u_i;f_i)$ where $f_0\in\mathcal{MP}(R)$, $f_i,u_i\in  R_{[i-1]}$ such that  $u_i\in UV(R_{[i-1]})$. Then
		$\vec{f}$ is invertible  if and only if $(1:f_1)$ and $(i:u_i;f_i)$ are invertible for $i=2,\ldots,n$. 
		Further,  the inverse is given by the relation   $$ \Vec{f}^{-1}=(\prod_{i=n}^{2}(i:u_i^{-1};-u_i^{-1}f_i))(1:f_1^{-1}),$$
		where $f_1^{-1}$  is the inverse of $f_1$ with respect to composition and for $i=2,\ldots,n$, $u_i^{-1}$ is the inverse of $u_i$ with respect to multiplication. 
	\end{corollary}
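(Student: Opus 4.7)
My plan is to derive this corollary directly from the three results the authors have just proved, namely Theorem~\ref{Trele}, Lemma~\ref{relat1}, and Lemma~\ref{relat2}; no new computation should be required beyond a clean bookkeeping of which factor is inverted by which lemma. Since the decomposition $\Vec{f}=(1:f_1)\prod_{i=2}^{n}(i:u_i;f_i)$ is unique (by Remark~\ref{deco}(1)), the characterization of invertibility and the explicit formula for the inverse can both be read off that factorization.

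For the characterization, I would argue the two directions separately, but both are one-liners. For the forward direction, if $\Vec{f}$ is invertible then Theorem~\ref{Trele} forces $f_1$ to be an $R$-automorphism of $R[x_1]$ and each $u_i$ to lie in $R_{[i-1]}^{\times}$; but then Lemma~\ref{relat1}(2) says $(1:f_1)$ is invertible in $\mathcal{MT}_n$ and Lemma~\ref{relat2}(2) says $(i:u_i;f_i)$ is invertible in $\mathcal{MT}_n$ for $i=2,\ldots,n$. For the converse, if each of the factors $(1:f_1)$ and $(i:u_i;f_i)$ is invertible in $\mathcal{MT}_n$, then $\Vec{f}$, being a composition of invertible elements of the monoid, is itself invertible.

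For the explicit formula I would use the standard identity $(A_1 A_2 \cdots A_n)^{-1}=A_n^{-1}\cdots A_2^{-1}A_1^{-1}$ valid in any monoid, applied to the factorization
\begin{equation*}
\Vec{f} \;=\; (1:f_1)\circ(2:u_2;f_2)\circ\cdots\circ(n:u_n;f_n).
\end{equation*}
Substituting $(1:f_1)^{-1}=(1:f_1^{-1})$ from Lemma~\ref{relat1}(2) and $(i:u_i;f_i)^{-1}=(i:u_i^{-1};-u_i^{-1}f_i)$ from Lemma~\ref{relat2}(2) and reversing the order of the factors immediately produces the asserted formula
\begin{equation*}
\Vec{f}^{-1} \;=\; \Bigl(\prod_{i=n}^{2}(i:u_i^{-1};-u_i^{-1}f_i)\Bigr)\circ(1:f_1^{-1}).
\end{equation*}

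There is essentially no obstacle in this argument, since the three cited results do all the work; the only mild subtlety worth flagging is that the order of the factors in the inverse is reversed (so the product index runs from $n$ down to $2$ and is placed to the left of $(1:f_1^{-1})$), which is exactly what the statement records. A one-sentence verification that the product really does telescope to the identity could be added for the reader's comfort, but it follows immediately from the two lemmas and the associativity of composition noted in Theorem~\ref{constmon}.
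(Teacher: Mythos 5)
Your proposal is correct and follows exactly the route the paper intends: the paper gives no explicit proof, stating only that the corollary follows from Theorem~\ref{Trele}, Lemma~\ref{relat1} and Lemma~\ref{relat2}, which is precisely the combination you use (together with the standard reversal of factors when inverting a product in a monoid). Nothing is missing.
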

	\begin{definition}
		We denote by  $TR_n$  the group of units of the monoid $\mathcal{MT}_n$. We call  $TR_n$ the group of triangular vector-permutation polynomials or shortly the triangular group.
	\end{definition}
		If $R$ is finite, then $\pi_n({\mathcal{MT}_n})$ is a group of permutations of $R^n$ by Theorem~\ref{Trele}. Furthermore, in this case,  $\pi_n({TR_n})$ is also a group of permutations of $R^n$ (being a  closed subset of the finite group $\pi_n({\mathcal{MT}_n})$). An interesting question arises from what we already  mentioned:  Is it true that  $\pi (\mathcal{MT}_n)\ne \pi_n( TR_n)$?
	Fortunately,  we can  generally  answer  this question affirmatively except in the   case in which $R$ is a finite product of the field of two elements $\mathbb{F}_2$. Without loss of generality, we consider only the case that $R$ is a finite local ring since the general case follows by the Chinese Reminder Theorem.
	
	Before doing so,   we recall  the following celebrated fact which is a special case of a more general result due to N\"obauer \cite[Theorem~2.3]{Nopercon}. For a   proof of this fact, we refer the interested reader to a  paper by Nechaev~\cite[Theorem~3]{Necha}. 
	
	Hereafter, for a polynomial $f$, let $f'$ be its formal derivative.
	\begin{lemma}\cite[Theorem.~2.3]{Nopercon}\label{Necha}
		Let $R$ be a finite local ring, that is not a field, $M$ its maximal ideal, and 
		$f\in R[x]$. 
		
		Then $f$ is a permutation polynomial on $ R$ 
		if and only if the following conditions hold:
		\begin{enumerate}
			\item $f$ is a permutation polynomial on $R/M$;
			\item for all $a\in R$, $f'(a)\ne 0\mod{M}$.
		\end{enumerate}
	\end{lemma}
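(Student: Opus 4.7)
The plan is to handle both directions via a Taylor-type expansion together with the nilpotence of $M$, which holds because $R$ is a finite local ring; since $R\neq R/M$, there is a least integer $k\ge 2$ with $M^k=0$, and $M^{k-1}\neq 0$. The main algebraic input is the polynomial identity $f(X+Y)=f(X)+f'(X)Y+Y^2 q(X,Y)$ for some $q\in R[X,Y]$, which follows at once from linearity and the binomial expansion applied to each monomial $X^n$.

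For the forward direction, assume $f$ permutes $R$. First I observe that $a\equiv b\pmod M$ forces $f(a)\equiv f(b)\pmod M$, since $f(a)-f(b)=(a-b)h(a,b)$ for some $h\in R[X,Y]$; hence $f$ descends to a well-defined map $\bar f$ on $R/M$. Because $f$ is a bijection of $R$ and every coset of $M$ has the same cardinality $|M|$, each coset $a+M$ is carried bijectively onto the single coset $\bar f(a)+M$, and a counting argument on cosets shows that $\bar f$ must itself be a bijection of $R/M$, giving condition~(1). For condition~(2), I fix $a\in R$ and pick any nonzero $h\in M^{k-1}$; then $h^2\in M^{2k-2}=0$, so the Taylor identity collapses to $f(a+h)=f(a)+f'(a)h$. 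If $f'(a)\in M$, then $f'(a)h\in M\cdot M^{k-1}=M^k=0$, yielding $f(a+h)=f(a)$ with $a+h\neq a$, contradicting injectivity of $f$.

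For the reverse direction, assume (1) and (2). Since $R$ is finite, it suffices to prove $f$ is injective. If $f(a)=f(b)$, reducing mod $M$ and using (1) gives $\bar a=\bar b$, so $b=a+h$ for some $h\in M$. Plugging into the Taylor identity gives $0=f(a+h)-f(a)=f'(a)h+h^2 q(a,h)=h\bigl(f'(a)+h\,q(a,h)\bigr)$. By (2), $f'(a)\notin M$, i.e.\ $f'(a)\in R^\times$; and $h\,q(a,h)\in M$. In the local ring $R$ we have $R\setminus M=R^\times$, so $f'(a)+h\,q(a,h)$ is a unit. Multiplying by its inverse yields $h=0$, i.e.\ $a=b$.

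No serious obstacle is expected; the only substantive input is the Taylor identity, and once it is available the argument reduces to exploiting nilpotence of $M$ to locate elements with $h^2=0$ for the forward direction, and to exploiting the characterization $R\setminus M=R^\times$ to promote the condition $f'(a)\notin M$ to invertibility for the reverse direction. This is no doubt why the statement is merely quoted in the paper rather than reproved.
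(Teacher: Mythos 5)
Your proof is correct. Note first that the paper does not prove this lemma at all: it is quoted as \cite[Theorem~2.3]{Nopercon} with an explicit pointer to Nechaev's paper for a proof, so there is no in-paper argument to compare against. Your blind reconstruction is the standard one and it is complete: the identity $f(X+Y)=f(X)+f'(X)Y+Y^2q(X,Y)$, the nilpotence of $M$ in a finite (hence Artinian) local ring giving a nonzero $h\in M^{k-1}$ with $h^2=0$ for the forward direction, and the factorization $f(a+h)-f(a)=h\bigl(f'(a)+h\,q(a,h)\bigr)$ together with $R\setminus M=R^\times$ for the reverse direction. All the small supporting claims check out ($R$ not a field forces $k\ge 2$ so that $2k-2\ge k$; injectivity of $f$ on a coset plus equal coset cardinalities plus surjectivity on $R$ yields bijectivity of $\bar f$ on $R/M$; a unit plus an element of $M$ is a unit in a local ring). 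So you have supplied a correct, self-contained proof of a result the paper only cites.
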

	\begin{lemma}\label{nounitrep}
		Let $R$ be a finite local ring which is not a field with residue field $\mathbb{F}_q$. Then there is no invertible unit-valued polynomial that represents the unit-valued polynomial function induced by $g(x)=(x^q-x)+1$, i.e.  $\pi_1(g)\in \pi_1(UV(R[x])\setminus {R[x]}^\times)$. 
		
	\end{lemma}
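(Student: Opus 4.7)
The plan is to assume, for contradiction, that there exists a unit $h \in R[x]^{\times}$ with $\pi_{1}(h) = \pi_{1}(g)$, and to derive a contradiction by evaluating both $h$ and $g$ modulo the square $M^{2}$ of the maximal ideal at a carefully chosen point of $M$.

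First I would use the structural description of units of $R[x]$ recalled in Remark~\ref{uni-v properties}(4): any such $h$ has the form $h(x) = a_{0} + a_{1}x + \cdots + a_{k}x^{k}$ with $a_{0} \in R^{\times}$ and $a_{1}, \ldots, a_{k}$ nilpotent. Because $R$ is a finite local ring, its nilradical coincides with $M$, so $a_{i} \in M$ for every $i \geq 1$. Specialising the identity $h(a) = g(a)$ at $a = 0$ pins down the constant term as $a_{0} = g(0) = 1$. (The function $\pi_{1}(g)$ is genuinely unit-valued: for $a \in R$, Fermat's little theorem in $\mathbb{F}_{q}$ gives $a^{q} \equiv a \pmod{M}$, so $g(a) \in 1 + M \subseteq R^{\times}$.)

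The decisive step is to exploit the two-step filtration $R \supseteq M \supsetneq M^{2}$. Since $R$ is not a field we have $M \neq 0$, and Nakayama's lemma applied to the ideal $M$ gives $M \neq M^{2}$, so one may pick $a \in M \setminus M^{2}$. For this $a$,
\[ h(a) - 1 \;=\; \sum_{i=1}^{k} a_{i} a^{i} \;\in\; M \cdot M \;=\; M^{2}, \]
because $a_{i} \in M$ and $a \in M$; hence $h(a) \equiv 1 \pmod{M^{2}}$. On the other hand, $g(a) - 1 = a^{q} - a$, and since $q \geq 2$ we have $a^{q} \in M^{q} \subseteq M^{2}$, so $g(a) \equiv 1 - a \pmod{M^{2}}$. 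Equating $h(a)$ and $g(a)$ forces $a \equiv 0 \pmod{M^{2}}$, contradicting $a \in M \setminus M^{2}$.

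The main subtlety, and the reason the lemma is true, is that units of $R[x]$ have \emph{every} non-constant coefficient in $M$, so their values on $M$ are congruent to the constant term modulo $M^{2}$; whereas $g$ on $M$ genuinely moves by $-a$ modulo $M^{2}$. The hypothesis that $R$ is not a field is used exactly once, namely to guarantee via Nakayama that $M \supsetneq M^{2}$; without it no separating element $a$ would exist and the argument would collapse.
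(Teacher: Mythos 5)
Your proof is correct, but it takes a genuinely different route from the paper's. The paper also starts from the structure of units (Remark~\ref{uni-v properties}(4)) to write the hypothetical unit as $1+m_1x+\cdots+m_kx^k$ with $m_i\in M$, but then it multiplies both polynomials by $x$ and invokes N\"obauer's criterion (Lemma~\ref{Necha}): $xf(x)$ is a permutation polynomial on $R$ because its reduction mod $M$ is $x$ and its derivative is a unit mod $M$, whereas $xg(x)$ is not, because the derivative of $xg(x)$ reduces mod $M$ to $1-x$, which vanishes at $1\in\mathbb{F}_q$; since equivalent polynomials induce the same function after multiplication by $x$, this is a contradiction. Your argument instead evaluates directly at a point $a\in M\setminus M^2$ (which exists by Nakayama, or simply by nilpotency of $M$ in a finite local ring) and compares values modulo $M^2$: the unit satisfies $h(a)\equiv 1$, while $g(a)\equiv 1-a\pmod{M^2}$, forcing $a\in M^2$. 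Your approach is more elementary — it avoids the N\"obauer/Nechaev permutation criterion entirely and isolates exactly where the hypothesis that $R$ is not a field enters ($M\supsetneq M^2$) — while the paper's approach fits naturally into its surrounding machinery, since Lemma~\ref{Necha} is already stated and used nearby. Both are complete and correct.
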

	\begin{proof}
		Assume to the contrary that there is an invertible  $f\in R[x]$ such that $f$ and $g$ induce the same function  on $R$. Then by Remark~\ref {uni-v properties}, since $f(0)=g(0)=1$, $f(x)=1 +m_1x+\cdots+m_nx^n$ with  
		$m_1,\ldots,m_n\in M$. Also, $xf(x)$ and  $xg(x)$ induce the same function on $R$.
		However, by Lemma~\ref{Necha}, $xf(x)$ is a permutation polynomial while $xg(x)$ is not (since the derivative of $xg(x)$ admits a root on $\mathbb{F}_q$).  This 
		contradiction implies that such a polynomial $f$ does not exist.
	\end{proof}
	
	\begin{theorem}Let $R$ be a finite local ring. Then
		$\pi_n( TR_n)= \pi_n (\mathcal{MT}_n)$ if and only if $R=\mathbb{F}_2$.
	\end{theorem}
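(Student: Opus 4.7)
The plan is to prove both implications by leveraging Lemma~\ref{equivrela}, which lifts equivalences of vector-polynomials to equivalences of their factors, together with the unit description from Theorem~\ref{Trele}.

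For the ``if'' direction, I would assume $R = \mathbb{F}_2$ and take an arbitrary $\vec{f} = (1:f_1)\prod_{i=2}^n (i:u_i;f_i) \in \mathcal{MT}_n$. Since the only unit of $\mathbb{F}_2$ is $1$, every $u_i \in UV(\mathbb{F}_2[x_1,\ldots,x_{i-1}])$ induces the constant function $1$ on $R^{i-1}$, so $u_i \quv 1$ on $R$. A permutation polynomial $f_1$ on $\{0,1\}$ must induce either the identity or the swap, both realised by the linear $R$-automorphisms $x_1$ and $x_1+1$. Lemma~\ref{equivrela} would then give that $\vec{f}$ induces the same function on $R^n$ as $(1:\tilde f_1)\prod_{i=2}^n (i:1;f_i)$, where $\tilde f_1 \in Aut_R(R[x_1])$; by Theorem~\ref{Trele} the latter lies in $TR_n$. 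Combined with the trivial inclusion $\pi_n(TR_n) \subseteq \pi_n(\mathcal{MT}_n)$, this yields equality.

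For the converse, I would show $R \ne \mathbb{F}_2 \Rightarrow \pi_n(TR_n) \subsetneq \pi_n(\mathcal{MT}_n)$ by exhibiting a unit-valued polynomial $g \in UV(R[x_1])$ whose induced function on $R$ is not induced by any element of $R[x_1]^\times$, and then forming the witness $\vec{f} = (1:x_1)(2:g;0)\prod_{i=3}^n (i:1;0) \in \mathcal{MT}_n$. If some $\vec{h} = (1:h_1)\prod_{i=2}^n (i:v_i;h_i) \in TR_n$ satisfied $\vec{h} \quv \vec{f}$, Lemma~\ref{equivrela} would force $v_2 \quv g$ on $R$, while Theorem~\ref{Trele} forces $v_2 \in R[x_1]^\times$, contradicting the choice of $g$.

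The construction of $g$ splits into two sub-cases. If $R$ is not a field, Lemma~\ref{nounitrep} provides $g(x) = (x^q - x) + 1$ directly. If $R = \mathbb{F}_q$ with $q \geq 3$, then $R$ has no nonzero nilpotents, so $R[x_1]^\times = R^\times$ by Remark~\ref{uni-v properties}(2), and units of $R[x_1]$ induce only constant functions on $R$; since $|R^\times| \geq 2$ and every function on $\mathbb{F}_q$ is polynomial (Lagrange interpolation), there exists a non-constant polynomial $g$ with image in $R^\times$, which therefore cannot be equivalent to any unit of $R[x_1]$. The main obstacle is recognising that these two sub-cases must be treated separately, since Lemma~\ref{nounitrep} assumes $R$ is not a field; once both constructions of $g$ are in hand, the Lemma~\ref{equivrela} contradiction argument unifies them.
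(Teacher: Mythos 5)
Your proposal is correct and follows essentially the same route as the paper's own proof: the $\mathbb{F}_2$ direction replaces every unit-valued factor by the constant $1$ and the permutation polynomial by $x_1$ or $x_1+1$ via Lemma~\ref{equivrela}, and the converse uses the witness $(2:g;0)$ with $g=(x^q-x)+1$ from Lemma~\ref{nounitrep} in the non-field case and a Lagrange-interpolated non-constant unit-valued polynomial when $R=\mathbb{F}_q$, $q\ge 3$. No gaps.
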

	\begin{proof}
		If $R=\mathbb{F}_2$, one  easily sees that $1$ is the only unit of $R$ and hence every unit-valued polynomial $u(x_1,$\ldots$,x_k)$ in $k$ variables (for any $k\ge1$) is equivalent to the (invertible) constant  polynomial $1$. On the other hand, there are only two  permutations of $R$ that are induced by  the (invertible) permutation polynomials $x$ and $x+1$. 
		
		Now, if $\vec{F} \in \pi_n (\mathcal{MT}_n)$, then $\vec{F}=\pi_n (\vec{f})$ for some $\vec{f}\in \pi (\mathcal{MT}_n)$, where
		\[\Vec{f}=(1:f_1)\prod_{i=2}^{n}(i:u_i;f_i),  \]
		with  $f_1\in \mathcal{MP}(R)$  and    $f_i,u_i\in R_{[i-1]}$   such that  $ u_i\in UV(R_{[i-1]}) $    for $i=2,\ldots, n$. But, the above discussion tells us that $u_i \quv 1$ on $R$ (for $i=2,\ldots, n$) and either $f_1\quv x_1$ or $x_1+1$ on $R$. Without loss of generality  say $f_1\quv x_1$. 
		So, if  we consider $\Vec{g}=(1:x_1)\prod_{i=2}^{n}(i:1;f_i)$, then, by Theorem~\ref{Trele} and Definition~\ref{CHSequvfun}, $\Vec{g}\in TR_n$ and $\pi_n(\Vec{g})=\pi_n(\Vec{f})=\vec{F}$. Thus, $ \vec{F}\in \pi_n(TR_n)$.
		
		For the other direction, we construct a vector-polynomial permutation $\vec{F}\in   \pi_n(\mathcal{MT}_n)\setminus \pi_n(TR_n)$. However, we distinguish two cases on $R$.
		First, assume that $R$ is a local ring with  maximal ideal $M\ne \{0\}$, and  consider the unit-valued polynomial  
		$g(x_1)=(x_1^q-x_1)+1$. Then $\vec{g}=(2:g;0)\in \mathcal{MT}_n\setminus TR_n$ and hence  $\pi (\vec{g})\in \pi (\mathcal{MT}_n)$.
		We claim that $\pi (\Vec{g})\notin \pi ( TR_n)$. Otherwise, suppose that there is a vector-permutation  polynomial $\Vec{f} \in TR_n$,   such  that $\pi_n(\Vec{f})=\pi_n(\Vec{g})$. Then, writing  $\Vec{f}=(1:f_1)\prod_{i=2}^{n}(i:u_i;f_i)$ implies that $u_2$ is a unit of $R[x_1]$ by 
		Theorem~\ref{Trele}.  Since $ \pi (\Vec{g})=\pi_n(\Vec{f})$, we have by Lemma~\ref{equivrela},
		$u_2\quv g$ on $R$. But, such a unit polynomial $u_2$ does not exist by Lemma~\ref{nounitrep}.
		Hence, the claim. 
		
		Now, consider the case $R=\mathbb{F}_q$ with $q\ne 2$. One can  easily see that the unit polynomials are only the non zero-constants. Then, since $q\ne 2$, there is an element $a\in\mathbb{F}_q$ such that $a\ne 0,1$ and we can define a non constant unit-valued function $F\colon \mathbb{F}_q \longrightarrow \mathbb{F}_q$,
		$F(b)=\begin{cases}a & \textnormal{if } b=a  \\
			1 & \textnormal{if } b\ne a 
		\end{cases}.$ Also, by Lagrange interpolation, we can find a polynomial $g$ that represents $F$. Then an argument similar to that of
		the previous case shows that  $\pi_n(2:g;0)\in \pi_n(\mathcal{MT}_n)\setminus \pi_n(TR_n)$. 
				\end{proof}

	\begin{remark}\label{strcembedding}
		Let $k>n$. Then the map
				\[e_{n,k}\colon R_{[n]}^n
		\longrightarrow R_{[k]}^k ,  \vec{f}=(f_1,\ldots,f_n)\mapsto (f_1,\ldots,f_n,x_{n+1}\ldots,x_k)\]  is
		well known to be an embedding and by its  restrictions on  the monoid $\mathcal{MT}_n$ and its group of units $TR_n$ respectively, we have the following embeddings
		\begin{enumerate}
			\item  $\mathcal{MT}_n \hookrightarrow   \mathcal{MT}_k$; and
			\item  $TR_n \hookrightarrow  TR_k$.
		\end{enumerate}
		Also, there is an embedding  $\bar{e}_{n,k}\colon \pi_{n}(\mathcal{MT}_n)
		\longrightarrow \pi_{k}(\mathcal{MT}_k) ,  \pi_{n}( \vec{f}) \mapsto \pi_{k}\circ e_{n,k}( \vec{f})$. 
	\end{remark}
	\section{Some  results on the structure of $\mathcal{MT}_n$}\label{sec4}
	In this section, we  investigate the structures of the monoid  $\mathcal{MT}_n$,   its group of units $TR_n$, and when $ R$ is finite,  its induced group $\pi_n(\mathcal{MT}_n)$ of permutations on $R^n$.
	In particular, we see that the structures of     $TR_n$  and  $\pi_n(\mathcal{MT}_n)$ depend mainly
	on that of $\mathcal{MT}_n$.
	
	From now on, when $H$ is a monoid let $H^\times$ denote its group of units.
	\subsection{The structure  of the monoid $\mathcal{MT}_n$} \par
	In this subsection, we see that the triangular monoid $\mathcal{MT}_n$ 
	can be decomposed into an iterated semi-direct product of $n$ monoids.\par
	The concept of the semi-direct product of monoids
	plays a vital role in this part of the  paper. In the following, we recall its   definition. 
	Let $A$ and $B$ be monoids and $End(B)$ be the monoid of endomorphisms
	of $B$  with respect to composition. If $\phi\colon A\longrightarrow End(B)$, $a\mapsto \phi_a$, is a homomorphism then the semi-direct product
	$B\rtimes_{\phi} A$ (or simply $B\rtimes  A$) is the monoid with elements $\{ (a,b)\colon a\in A, b\in          B\}$ and operation  $(a,b)(c,d)=(ac,b\phi_a(d))$  (when $A$ acts on $B$ from left) or 
	$(a,b)(c,d)=(ac,\phi_c(b)d)$  (when $A$ acts on $B$ from right) (see for example~\cite{monsemid}). We note to the reader that   if we chose $A$ and $B$ to be two groups, and replace $End(B)$ by the group $Aut(B)$ of automorphisms (i.e. $\phi\colon A\longrightarrow Aut(B)$), then we just have the semi-direct product of the group $B$ by $A$ with the same operation as  mentioned  before.  We note here that  $(a,b)^{-1}=(a^{-1},\phi_{a^{-1}}(b^{-1}))$ and $\phi_{a^{-1}}=\phi_{a}^{-1}$.
	\begin{lemma}\label{semdm}
		Fix $2\le k \le n$. Let $\mathcal{ML}^n_{k}$ denote the set of vector-polynomials of the form
		
		$(k:u;f)$,
		where $f,u\in R_{[k-1]}$ such that $u\in UV(R_{[k-1]})$, is a  submonoid of the monoid $\mathcal{MT}_n$ which is isomorphic to the semi-direct product of the additive group of $R_{[k-1]}$ by the monoid $(UV(R_{[k-1]}), ``\cdot")$   of unit-valued polynomials.  That is,
		$\mathcal{ML}^n_{k} \cong R_{[k-1]} \rtimes UV(R_{[k-1]})$.
	\end{lemma}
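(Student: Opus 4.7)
The proof plan breaks into two pieces: first verify that $\mathcal{ML}^n_k$ is a submonoid of $\mathcal{MT}_n$, then produce an explicit isomorphism to the semi-direct product.

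For the submonoid check, the identity of $\mathcal{MT}_n$ lies in $\mathcal{ML}^n_k$ because $(k:1;0)=(x_1,\ldots,x_n)$ by Remark~\ref{deco}(2). Closure under composition is exactly the content of Lemma~\ref{relat2}(1): given $(k:u;f),(k:v;g)\in \mathcal{ML}^n_k$, we have
\[
(k:u;f)\circ(k:v;g)=(k:uv;\, f+ug),
\]
and since $UV(R_{[k-1]})$ is a multiplicative monoid the product $uv$ is again unit-valued, while $f+ug\in R_{[k-1]}$. Associativity is inherited from $\mathcal{MT}_n$.

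For the isomorphism I would first specify the semi-direct product structure on $R_{[k-1]}\rtimes UV(R_{[k-1]})$. Take $B=R_{[k-1]}$ as additive monoid, $A=UV(R_{[k-1]})$ as multiplicative monoid, and define $\phi\colon UV(R_{[k-1]})\to End(R_{[k-1]})$ by $\phi_u(g)=ug$. Distributivity of multiplication over addition in $R_{[k-1]}$ ensures $\phi_u$ is an additive endomorphism, and $\phi_{uv}(g)=uvg=\phi_u(\phi_v(g))$ shows $\phi$ is a monoid homomorphism. Following the convention of Definition~\ref{semidi} (as extended to monoids in the paragraph preceding Lemma~\ref{semdm}), the operation on $R_{[k-1]}\rtimes UV(R_{[k-1]})$ with $A$ acting from the left is
\[
(u,f)(v,g)=(uv,\, f+\phi_u(g))=(uv,\, f+ug).
\]

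Now define $\psi\colon \mathcal{ML}^n_k\longrightarrow R_{[k-1]}\rtimes UV(R_{[k-1]})$ by $\psi(k:u;f)=(u,f)$. The formula above matches Lemma~\ref{relat2}(1) term by term, so $\psi$ is a homomorphism sending the identity $(k:1;0)$ to $(1,0)$. Surjectivity is immediate from the definition of $\mathcal{ML}^n_k$. For injectivity, if $(k:u;f)=(k:v;g)$ as vector-polynomials in $\mathcal{MT}_n$, then equating the $k$-th components gives $f+x_k u=g+x_k v$ in $R_{[k]}$; since $f,g,u,v\in R_{[k-1]}$ do not involve $x_k$, comparing the constant and linear terms in $x_k$ forces $f=g$ and $u=v$.

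There is no serious obstacle here; the only subtle point is matching the paper's convention for semi-direct products of monoids (in particular, that elements of $B\rtimes A$ are written $(a,b)$ with $a\in A$ first, and that the action of $u$ on $R_{[k-1]}$ must be chosen as multiplication rather than something more exotic) so that the composition rule of Lemma~\ref{relat2}(1) coincides literally with the semi-direct product operation.
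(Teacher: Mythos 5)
Your proposal is correct and follows essentially the same route as the paper: identity and closure via Lemma~\ref{relat2}, the action $\phi_u(g)=ug$ giving a monoid homomorphism $UV(R_{[k-1]})\to End(R_{[k-1]})$, and the identification $(k:u;f)\leftrightarrow(u,f)$ matching the semi-direct product operation $(u,f)(v,g)=(uv,f+ug)$ with the composition rule. The only difference is cosmetic: you orient $\psi$ in the opposite direction and spell out the injectivity/surjectivity details that the paper leaves to the reader.
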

	\begin{proof}
		It is clear that the identity $(k:1; 0)\in \mathcal{ML}^n_{k}$. Also, by Lemma~\ref{relat2},
		$ \mathcal{ML}^n_{k}$ is closed with respect to composition, whence $\mathcal{ML}^n_{k}$
		is a submonoid of $\mathcal{MT}_n$. 
		Now, given a unit-valued polynomial $u\in  UV(R_{[k-1]})$, one can define
		an endomorphism $\phi_u\colon R_{[k-1]} \longrightarrow R_{[k-1]}$ by 
		$\phi_u(f)=uf$ for every $f\in  R_{[k-1]}$. Consider, the map $\phi\colon UV(R_{[k-1]})\longrightarrow (End( R_{[k-1]}),``\circ")$, ($u \mapsto \phi_u$). We claim that $\phi$ is a homomorphism. 
		To show the claim, let $u,v\in  UV(R_{[k-1]})$ and consider $f\in  R_{[k-1]}$.
		We have,  $$  \phi_u \circ  \phi_v(f)=\phi_u (\phi_v(f))= \phi_u (v f) =uvf=\phi_{uv}(f).$$
		Thus $  \phi_u \circ  \phi_v = \phi_{uv}.$ This proves the claim. Hence, the semi-direct product
		$R_{[k-1]} \rtimes UV(R_{[k-1]})$ is defined with the following  operation
		$$(u,f)(v,g)=(uv,f+ug),$$ where $u,v\in UV(R_{[k-1]})$ and $f,g\in R_{[k-1]}$.
		The rest is to show that the map 
		$$\psi\colon R_{[k-1]} \rtimes UV(R_{[k-1]})\longrightarrow  \mathcal{ML}^n_{k}, \text{ }  (u,f) \mapsto (k:u; f)$$ is an isomorphism. We leave this to the reader. 		
	\end{proof}
	Next, we   show that the monoid $\mathcal{MT}_n$ can be viewed as the semi-direct product  of the  $\mathcal{ML}^n_{n}$ (defined in Lemma~\ref{semdm}) by the monoid $\mathcal{MT}_{n-1}$. 
	We begin with proving the following technical Lemma.
	
	\begin{lemma}\label{Tsemdlma}
		Let $n>1$.
		Then     $      \mathcal{ML}^n_{n}  \rtimes \mathcal{MT}_{ n-1}$ is equipped with the operation
		\begin{equation}\label{semoponTM}
			(\vec{h},((n:u ;f ))(\vec{l},((n:v ;g )) 
			= (\vec{h}\circ\vec{l} ,(n:u(\vec{l})v ;f(\vec{l})+u(\vec{l})g) ),
		\end{equation}where $\vec{h},\vec{l}\in \mathcal{MT}_{ n-1}$ and $(n:u ;f ),(n:v ;g )\in \mathcal{ML}^n_{n}$
	\end{lemma}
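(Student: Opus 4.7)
The plan is to read off from equation~(\ref{semoponTM}) a right action $\phi\colon \mathcal{MT}_{n-1}\longrightarrow \operatorname{End}(\mathcal{ML}^n_n)$ and verify that $\phi$ is a monoid anti-homomorphism by endomorphisms, so that (\ref{semoponTM}) coincides with the right-action semi-direct product multiplication introduced in Section~\ref{sec2}. Comparing the second coordinate of (\ref{semoponTM}) with the product in $\mathcal{ML}^n_n$ supplied by Lemma~\ref{relat2}, namely $(n:u(\vec{l});f(\vec{l}))(n:v;g) = (n:u(\vec{l})v; f(\vec{l})+u(\vec{l})g)$, suggests defining
\[ \phi_{\vec{l}}\bigl((n:u;f)\bigr) = (n:\,u(\vec{l}); f(\vec{l})), \]
where $u(\vec{l})$ and $f(\vec{l})$ denote the substitution of $\vec{l}=(l_1,\ldots,l_{n-1})$ for $(x_1,\ldots,x_{n-1})$ in $u$ and $f$ in the sense of Definition~\ref{compose}.

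First I would check that each $\phi_{\vec{l}}$ is a well-defined endomorphism of $\mathcal{ML}^n_n$. Well-definedness requires $u(\vec{l})\in UV(R_{[n-1]})$, which is immediate because $(u(\vec{l}))(a)=u(\vec{l}(a))\in R^\times$ for every $a\in R^{n-1}$. That $\phi_{\vec{l}}$ respects the product of Lemma~\ref{relat2} is a routine calculation using that substitution of $\vec{l}$ commutes with polynomial addition and multiplication, and $\phi_{\vec{l}}$ clearly fixes the identity $(n:1;0)$.

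Next, I would verify compatibility of $\phi$ with composition in $\mathcal{MT}_{n-1}$ in the form $\phi_{\vec{h}\circ\vec{l}} = \phi_{\vec{l}}\circ\phi_{\vec{h}}$ required by the right-action convention. This reduces to associativity of polynomial substitution: since $\vec{h}\circ\vec{l}=(h_1(\vec{l}),\ldots,h_{n-1}(\vec{l}))$ by Definition~\ref{compose}, one has $u(\vec{h}\circ\vec{l}) = u(\vec{h})(\vec{l})$ (and similarly for $f$). The identity of $\mathcal{MT}_{n-1}$ is $(x_1,\ldots,x_{n-1})$ and plainly acts trivially.

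Unpacking the semi-direct product operation (right-action variant) with this $\phi$ yields
\[ (\vec{h},(n:u;f))(\vec{l},(n:v;g)) = \bigl(\vec{h}\circ\vec{l},\; \phi_{\vec{l}}((n:u;f))\cdot(n:v;g)\bigr) = \bigl(\vec{h}\circ\vec{l},\,(n:u(\vec{l})v;\,f(\vec{l})+u(\vec{l})g)\bigr), \]
which is exactly~(\ref{semoponTM}). The only real obstacle here is bookkeeping: the operation chosen by the author is the right-action version, which forces $\phi$ to be an anti-homomorphism in the usual sense; one must ensure that the substitution $u(\vec{l})$ lands on the \emph{left} factor of the $\mathcal{ML}^n_n$ product, matching the order of the factors in (\ref{semoponTM}).
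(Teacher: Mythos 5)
Your proposal is correct and follows essentially the same route as the paper: you define the same substitution action $\phi_{\vec{l}}\bigl((n:u;f)\bigr)=(n:u(\vec{l});f(\vec{l}))$, verify it is an endomorphism of $\mathcal{ML}^n_n$ via Lemma~\ref{relat2}, reduce compatibility with composition in $\mathcal{MT}_{n-1}$ to associativity of substitution, and unpack the right-action semi-direct product to recover equation~(\ref{semoponTM}). Your remark that $\phi$ is an anti-homomorphism in left-operator notation is the same fact the paper encodes by writing its maps on the right.
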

	\begin{proof}
		Let $\vec{h}\in \mathcal{MT}_{n-1}$. For simplicity write  $\vec{h}=(h_1, \ldots,h_{n-1})$, where $h_i\in R_{[i]}$ for $i=1,\ldots,n-1$ (keeping in mind that $\vec{h}$ satisfies the conditions of expression~(\ref{vectel}) in Theorem~\ref{Constgr}). Then define a map  $\phi_{\vec{h}}\colon \mathcal{ML}^n_{n} \longrightarrow \mathcal{ML}^n_{n}$ by 
		$((n:u;f))\phi_{\vec{h}}=(n:u(\vec{h});f(\vec{h}))$  for   $(n:u ;f )\in \mathcal{ML}^n_{n}$. Since  $u$ is a unit-valued polynomial (in $n-1$ variables), we have  that  $u(\vec{h})$ is a unit-valued polynomial (in $n-1$ variables). This shows $((n:u;f))\phi_{\vec{h}}\in \mathcal{ML}^n_{n}$. Now, let $(n:v ;g )\in \mathcal{ML}^n_{n}$ and consider
		\begin{align*}
			((n:u;f)(n:v;g))\phi_{\vec{h}} &=((n:uv;f+ug))\phi_{\vec{h}} \text{ by Lemma~\ref{relat2}}\\
			& =(n:(uv)(\vec{h});f(\vec{h})+(ug)(\vec{h}))=(n: u(\vec{h})v(\vec{h});f(\vec{h})+ u(\vec{h})g(\vec{h}))\\
			&\displaystyle_{=}^{\text{Lemma~\ref{relat2}}} (n: u(\vec{h});f(\vec{h}))(n: v(\vec{h}); g(\vec{h}))=
			((n:u;f))	\phi_{\vec{h}}((n:v;g))	\phi_{\vec{h}}.
		\end{align*}
		Thus, $\phi_{\vec{h}}\in End(\mathcal{ML}^n_{n})$. Then,  the map $\phi\colon \mathcal{MT}_{ n-1}\longrightarrow End(\mathcal{ML}^n_{n})$, ${\vec{h}}\mapsto \phi_{\vec{h}}$ is a homomorphism. Indeed, if $\vec{l}=(l_1,\ldots,l_{n-1})\in \mathcal{MT}_{n-1}$, we have
		\begin{align*}
			((n:u ;f ))\phi_{\vec{h}\circ\vec{l}} &=(n:u(\vec{h}\circ\vec{l}) ;f(\vec{h}\circ\vec{l}) )
			=(n:u(h_1(\vec{l}),\ldots,h_{n-1}(\vec{l})) ;f(h_1(\vec{l}),\ldots,h_{n-1}(\vec{l})) )\\
			& =(n:u(h_1,\ldots,h_{n-1}) ;f(h_1 ,\ldots,h_{n-1}) )\phi_{\vec{l}}=((n:u ;f ))\phi_{\vec{h}})\phi_{\vec{l}}\\
			&=((n:u ;f ))\phi_{\vec{h}}\circ\phi_{\vec{l}}.
				\end{align*}
		Thus, the semi-direct product 
		$\mathcal{ML}^n_{n} \rtimes \mathcal{MT}_{ n-1}$ is defined with the following  operation
		\begin{align*}
			(\vec{h},((n:u ;f ))(\vec{l},((n:v ;g )) &=(\vec{h}\circ\vec{l} ,((n:u ;f ))\phi_{\vec{l}}(n:v ;g ))\\
			&= (\vec{h}\circ\vec{l} ,(n:u(\vec{l})v ;f(\vec{l})+u(\vec{l})g) )
		\end{align*}
		or more explicitly
		\begin{align}\label{explcform}(\vec{h},(n:u ;f ))(\vec{l},(n:v ;g ))&= 
			((h_1\circ l_1,h_2(l_1,l_2),\ldots,h_{n-1}(l_1,\ldots,l_{n-1})) ,(n:u(\vec{l})v ;f(\vec{l})+u(\vec{l})g)).\end{align}\qedhere
	\end{proof}
	\begin{proposition}\label{Tsemd}
		Let $n>1$ and let $\mathcal{ML}^n_{n}$ denote the monoid of vector-polynomials of the form
		
		$(n:u;f)$,
		where $f,u\in R_{[k-1]}$ and    $u\in UV(R_{[k-1]})$. Then     $\mathcal{MT}_n \cong      \mathcal{ML}^n_{n}  \rtimes \mathcal{MT}_{ n-1}$.
		
	\end{proposition}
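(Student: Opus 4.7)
My plan is to exhibit an explicit isomorphism
\[
\Psi\colon \mathcal{ML}^n_{n}\rtimes \mathcal{MT}_{n-1}\longrightarrow \mathcal{MT}_n,\qquad \Psi(\vec{h},(n:u;f)) = e_{n-1,n}(\vec{h})\circ (n:u;f),
\]
where $e_{n-1,n}$ is the embedding from Remark~\ref{strcembedding} that appends $x_n$ to a vector-polynomial in $n-1$ variables. Well-definedness is immediate because $e_{n-1,n}(\vec{h})\in \mathcal{MT}_n$ and $(n:u;f)\in \mathcal{MT}_n$, and $\mathcal{MT}_n$ is closed under composition by Theorem~\ref{constmon}.

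The key technical step is to establish the commutation identity
\[
(n:u;f)\circ e_{n-1,n}(\vec{h}) \;=\; e_{n-1,n}(\vec{h})\circ (n:u(\vec{h});f(\vec{h})),
\]
for every $\vec{h}=(h_1,\ldots,h_{n-1})\in \mathcal{MT}_{n-1}$ and every $(n:u;f)\in \mathcal{ML}^n_{n}$. This is essentially the ``commutation'' form already used in Fact~\ref{tricalc}(2): writing out both sides via Definition~\ref{compose}, the left side equals $(h_1,\ldots,h_{n-1},f(\vec{h})+x_n u(\vec{h}))$ because the first $n-1$ coordinates of $(n:u;f)$ are just $x_1,\ldots,x_{n-1}$, while $f,u\in R_{[n-1]}$ become $f(\vec h),u(\vec h)$ under substitution and $x_n$ is untouched; the right side equals the same tuple because $e_{n-1,n}(\vec{h})$ has last coordinate $x_n$ and each $h_i$ is independent of $x_n$, so substituting $(n:u(\vec h);f(\vec h))$ into it leaves the first $n-1$ coordinates as $h_i$ and returns the new $n$-th coordinate intact.

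With the commutation in hand, the homomorphism property is a short chain: for $(\vec h,(n:u;f))$ and $(\vec l,(n:v;g))$ we compute
\[
\Psi(\vec h,(n:u;f))\circ \Psi(\vec l,(n:v;g)) = e_{n-1,n}(\vec h)\circ(n:u;f)\circ e_{n-1,n}(\vec l)\circ(n:v;g),
\]
then slide $(n:u;f)$ past $e_{n-1,n}(\vec l)$ using the commutation to obtain $e_{n-1,n}(\vec h)\circ e_{n-1,n}(\vec l)\circ (n:u(\vec l);f(\vec l))\circ(n:v;g)$. Finally, since $e_{n-1,n}$ respects composition (it is a monoid embedding) and Lemma~\ref{relat2} gives $(n:u(\vec l);f(\vec l))(n:v;g)=(n:u(\vec l)v;f(\vec l)+u(\vec l)g)$, the product becomes $e_{n-1,n}(\vec h\circ\vec l)\circ(n:u(\vec l)v;f(\vec l)+u(\vec l)g)$, which by formula~(\ref{semoponTM}) is exactly $\Psi\bigl((\vec h,(n:u;f))(\vec l,(n:v;g))\bigr)$.

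For bijectivity I would invoke the uniqueness of decomposition recorded in Remark~\ref{deco}(1): any $\vec f\in \mathcal{MT}_n$ has a unique expression $(1:f_1)\prod_{i=2}^{n}(i:u_i;f_i)$, so setting $\vec h=(1:f_1)\prod_{i=2}^{n-1}(i:u_i;f_i)\in\mathcal{MT}_{n-1}$ (viewed via $e_{n-1,n}$) and $(n:u_n;f_n)\in \mathcal{ML}^n_{n}$ yields $\vec f=\Psi(\vec h,(n:u_n;f_n))$, showing surjectivity; injectivity follows because two distinct pairs would give two distinct decompositions of their common image. I expect the only mildly delicate part to be the commutation identity together with careful bookkeeping of the embedding $e_{n-1,n}$; everything else is formal.
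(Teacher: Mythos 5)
Your proof is correct and follows essentially the same route as the paper: your map $\Psi(\vec h,(n:u;f))=e_{n-1,n}(\vec h)\circ(n:u;f)$ is literally the paper's $\psi(\vec h,(n:u;f))=(h_1,\ldots,h_{n-1},f+ux_n)$, and your commutation identity is just a repackaging of the direct expansion the paper performs against Equation~(\ref{explcform}). Your bijectivity argument via the unique decomposition of Remark~\ref{deco} correctly supplies the detail the paper leaves to the reader.
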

	\begin{proof}
		In view of Lemma~\ref{Tsemdlma}, the semi-direct product $ \mathcal{ML}^n_{n}  \rtimes \mathcal{MT}_{ n-1}$ is defined by  the operation given in Equation~(\ref{semoponTM}) (more explicit in Equation~(\ref{explcform})).
		Define  a map $\psi\colon \mathcal{ML}^n_{n}  \rtimes \mathcal{MT}_{n-1} \longrightarrow  \mathcal{MT}_{ n}   $
		as the following: if $(\vec{h}, (n: u; f) )\in \mathcal{ML}^n_{n}  \rtimes \mathcal{MT}_{n-1}$, where  $u\in UV(R_{[n-1]})$  and $\vec{h}=(h_1, \ldots,h_{n-1})$; $h_i\in R_{[i]}$ for $i=1,\ldots,n-1$, then set $$\psi (\vec{h}, (n: u; f) )=(h_1,\ldots, h_{n-1}, f+ux_n).$$   Note that, by the definition of $\mathcal{MT}_{n-1}$, $h_1$ is a permutation polynomial on $R$ and $h_i= f_i+ u_ix_i$ where  $f_i,  \in R_{[i-1]}$ and $u_i\in UV(R_{[i-1]})$   for $i=2,\ldots, n-1$. This means that $\psi (\vec{h}, (n: u; f) )\in  \mathcal{MT}_{n}$. Now, we show that $\psi$ is a homomorphism. Let $(\vec{h}, (n: u; f) )$ and  $(\vec{l}, (n: v; g) )\in \mathcal{ML}^n_{n}  \rtimes \mathcal{MT}_{n-1}$. Then we have  
		\begin{align*} \psi( (\vec{h},(n:u ;f )&)(\vec{l},(n:v ;g )))\\ &\displaystyle_{=}^{\text{Equation~\ref{explcform}}} 
			\psi(((h_1\circ l_1,h_2(l_1,l_2),\ldots,h_{n-1}(l_1,\ldots,l_{n-1})) ,(n:u(\vec{l})v ;f(\vec{l})+u(\vec{l})g)))\\
			&	=(h_1\circ l_1,h_2(l_1,l_2),\ldots,h_{n-1}(l_1,\ldots,l_{n-1}) ,f(\vec{l})+u(\vec{l})g+u\big(\small{\Vec{l}}\big)v x_n)\\
			&	=(h_1\circ l_1,h_2(l_1,l_2),\ldots,h_{n-1}(l_1,\ldots,l_{n-1}) ,f(l_1,\ldots,l_{n-1})+u(l_1,\ldots,l_{n-1})g\\
			& \quad + u(l_1,\ldots,l_{n-1})vx_n)\\
			& =(h_1,\ldots,h_{n-1},f+ux_n)(l_1,\ldots,l_{n-1},g+vx_n)\\
			& = \psi( (\vec{h},(n:u ;f )))\psi((\vec{l},(n:v ;g ))).\end{align*}
		The hardest part of the proof has been finished,   and the remaining details are straightforward and left to the reader.
		\end{proof}
	
	\begin{convention}
		From now on,	 by $A_1 \rtimes A_2 \rtimes  A_3$, we mean $A_1 \rtimes ( A_2 \rtimes  A_3)$, i.e. we start computing from the left.
	\end{convention}
	We are now in position  to decompose the triangular monoid  $\mathcal{MT}_{n}$ into an iterated semi-direct product of $\mathcal{ML}^n_{n},\ldots, \mathcal{ML}^2_{2}$, and  $\mathcal{MT}_1=\mathcal{MP}(R)$ (see Notation~\ref{MTSP}). In fact, we  deduce more.
	\begin{theorem}\label{mondec}
		Let $n>1$. Then \begin{enumerate}
			
			\item 	$\mathcal{MT}_n  \cong  \mathcal{ML}^n_{n} \rtimes \cdots  \rtimes \mathcal{ML}^2_{2}\rtimes  \mathcal{MP}(R)$;

			\item $	\mathcal{MT}_n \cong   ( R_{[n-1]}]  \rtimes UV(R_{[n-1]}))  \rtimes \cdots \rtimes ( R {[x_1]}  \rtimes UV(R{[x_1]}))\rtimes \mathcal{MP}(R)$.
		\end{enumerate}
		
	\end{theorem}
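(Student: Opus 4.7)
The plan is to prove both parts by straightforward induction on $n$, with Proposition~\ref{Tsemd} providing the inductive step and Lemma~\ref{semdm} supplying the identification of each factor in part (2). Recall also the convention that $A_1 \rtimes A_2 \rtimes A_3$ means $A_1 \rtimes (A_2 \rtimes A_3)$, so the iteration associates to the right.

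For part (1), the base case $n=2$ is immediate from Proposition~\ref{Tsemd} together with Notation~\ref{MTSP}: indeed, $\mathcal{MT}_2 \cong \mathcal{ML}^2_2 \rtimes \mathcal{MT}_1 = \mathcal{ML}^2_2 \rtimes \mathcal{MP}(R)$. For the inductive step, I assume as induction hypothesis that
\[
\mathcal{MT}_{n-1} \cong \mathcal{ML}^{n-1}_{n-1} \rtimes \cdots \rtimes \mathcal{ML}^2_2 \rtimes \mathcal{MP}(R).
\]
Applying Proposition~\ref{Tsemd} gives $\mathcal{MT}_n \cong \mathcal{ML}^n_n \rtimes \mathcal{MT}_{n-1}$, and substituting the hypothesis into the right factor yields the claimed decomposition. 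Part (2) then follows by replacing each $\mathcal{ML}^k_k$ in part (1) by the isomorphic monoid $R_{[k-1]} \rtimes UV(R_{[k-1]})$, per Lemma~\ref{semdm}.

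The main subtlety lies not in producing the chain of isomorphisms but in justifying the substitution of isomorphic factors inside an iterated semi-direct product. Concretely, to pass from $\mathcal{ML}^n_n \rtimes \mathcal{MT}_{n-1}$ to $\mathcal{ML}^n_n \rtimes (\mathcal{ML}^{n-1}_{n-1} \rtimes \cdots \rtimes \mathcal{MP}(R))$, one must verify that transporting the action $\phi \colon \mathcal{MT}_{n-1} \to \mathrm{End}(\mathcal{ML}^n_n)$ constructed in Lemma~\ref{Tsemdlma} along the induction isomorphism yields a well-defined homomorphism from the iterated product into $\mathrm{End}(\mathcal{ML}^n_n)$, and that the resulting semi-direct product operation still matches the componentwise substitution formula~\eqref{explcform}. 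This is a routine functoriality check: since $\phi$ is defined purely by polynomial substitution and substitution is associative, the transported action is exactly the one induced by evaluating the full tuple reconstructed from the factors $\mathcal{ML}^{n-1}_{n-1}, \ldots, \mathcal{ML}^2_2, \mathcal{MP}(R)$. The same functoriality handles part (2).

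I do not expect a genuine obstacle; the only real work is the bookkeeping above. Once the inductive template is set up and the functoriality of semi-direct product under isomorphism of factors is noted, both statements follow immediately from iterating Proposition~\ref{Tsemd} and then (for part (2)) applying Lemma~\ref{semdm} termwise.
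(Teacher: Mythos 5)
Your proposal is correct and follows essentially the same route as the paper: iterate Proposition~\ref{Tsemd} to peel off $\mathcal{ML}^n_n$ factors down to $\mathcal{MT}_1=\mathcal{MP}(R)$, then apply Lemma~\ref{semdm} termwise for part (2). The only difference is that you make explicit the routine functoriality check for substituting isomorphic factors into an iterated semi-direct product, which the paper leaves implicit.
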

	\begin{proof}
		By Proposition~\ref{Tsemd},   $\mathcal{MT}_n \cong      \mathcal{ML}^n_{n}  \rtimes \mathcal{MT}_{ n-1} $.  Then iterated applications of this relation yield the first expression. Then, we use   Lemma~\ref{semdm}, to  have $\mathcal{ML}^i_{i} \cong R_{[i-1]} \rtimes UV(R_{[i-1]})$ for $i=2,\ldots,n$.  Combining  this with the first expression  gives the second expression. 
			\end{proof}
	\subsection{The structure of the triangular group $TR_n$}
	\par 
	
	Using the fact that monoid  homomorphisms preserve the identity, we  will see that the group units  of a semi-direct product of monoids is just the semi-direct product of their groups of units. Then, we apply this result to the decomposition of $\mathcal{MT}_n$ given in Theorem~\ref{mondec} to obtain 
	a decomposition  of  the  group $TR_n$ into an iterated semi-direct product of  groups.
	\begin{lemma}\label{isomon}
		Let $A$ and $B$ be monoids. Consider the homomorphism $ \phi\colon A\longrightarrow End(B)$ ($a\mapsto \phi_a$, $\phi_a\colon B \longrightarrow B$ is a homomorphism), and let $\psi_a$ be the restriction of 
		$\phi_a$ onto $B^\times$ then
		\begin{enumerate}
			\item $\phi_a\in Aut(B)$ for every $a\in A^\times$; \label{1is}
			\item $\psi_a\in Aut(B^\times)$ for every $a\in A^\times$. \label{2is}
		\end{enumerate}
		
	\end{lemma}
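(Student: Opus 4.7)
The plan is to exploit the fact that monoid homomorphisms preserve the identity element, which makes the inverse $a^{-1}$ of a unit $a\in A^\times$ yield an inverse for the endomorphism $\phi_a$ via $\phi$.

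For part~\eqref{1is}, I would begin by observing that since $\phi\colon A\to End(B)$ is a monoid homomorphism, it must send the identity $1_A$ to the identity of $End(B)$, which is $id_B$. Hence for any $a\in A^\times$, applying $\phi$ to both sides of $aa^{-1}=a^{-1}a=1_A$ gives
\[
\phi_a\circ\phi_{a^{-1}}=\phi_{a^{-1}}\circ\phi_a=\phi_{1_A}=id_B.
\]
This exhibits $\phi_{a^{-1}}$ as a two-sided inverse of $\phi_a$ in $End(B)$, so $\phi_a$ is bijective and therefore lies in $Aut(B)$.

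For part~\eqref{2is}, the first step is to check that the restriction $\psi_a$ actually lands in $B^\times$, so that it is a well-defined map $B^\times \to B^\times$. This follows because any monoid endomorphism preserves units: if $b\in B^\times$ with inverse $b^{-1}$, then $\phi_a(b)\phi_a(b^{-1})=\phi_a(bb^{-1})=\phi_a(1_B)=1_B$, and similarly on the other side, so $\phi_a(b)\in B^\times$ with inverse $\phi_a(b^{-1})$. Thus $\psi_a$ is a group endomorphism of $B^\times$. Then the same argument as in part~\eqref{1is}, now applied to the restrictions, gives $\psi_a\circ\psi_{a^{-1}}=\psi_{a^{-1}}\circ\psi_a=id_{B^\times}$, so $\psi_a\in Aut(B^\times)$.

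There is no real obstacle here; the only conceptual point that has to be acknowledged is that we are using the monoid-homomorphism property $\phi(1_A)=id_B$ (as opposed to merely the semigroup property), together with the fact that endomorphisms automatically preserve the subset of units. The calculation is otherwise entirely routine.
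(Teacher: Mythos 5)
Your proposal is correct and follows essentially the same route as the paper's own proof: both parts rest on $\phi(1_A)=\mathrm{id}_B$ to produce $\phi_{a^{-1}}$ as a two-sided inverse of $\phi_a$, and on the fact that monoid homomorphisms preserve units so that $\psi_a$ maps $B^\times$ into $B^\times$. You merely spell out the unit-preservation computation explicitly, which the paper states without proof.
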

	\begin{proof}
		Consider an element $a\in A^\times$. By the definition of $\phi$, $\phi_a$ is a homomorphism. So  we  only need to show that $\phi_a$ is invertible. Since a homomorphism  preserves the identity, $\phi(1_A)=\phi_{1_A}$ is the identity endomorphism of $B$. Hence, by the properties of $\phi$,
		$$\phi_a\circ \phi_{a^{-1}}=\phi(a)\circ \phi({a^{-1}})=\phi(aa^{-1})=\phi(1_A)=\phi(a^{-1}a)=\phi_{a^{-1}}\circ
		\phi_a. $$
	Thus, $\phi_a$ is invertible, and 	therefore $ \phi_a \in Aut(B)$. This proves~(\ref{1is}).
		
		(\ref{2is})  Since a homomorphism  preserves units and the restriction of a homomorphism is a homomorphism, 
		we have  that $\psi_a\colon B^\times \longrightarrow B^\times$   is a homomorphism for every $a\in A^\times$. Then an  argument similar to that
		of (\ref{1is}) shows that $\psi_a\in Aut(B^\times)$ for every $a\in A^\times$.
			\end{proof}
	\begin{remark}\label{restrictmonho}
		If 
		$A$ and $B$ are monoids and $ \phi\colon A\longrightarrow End(B)$ ($a\mapsto \phi_a$, $\phi_a\colon B \longrightarrow B$ is a homomorphism)  is a homomorphism, then   we have a homomorphism   $ \psi\colon A^\times\longrightarrow Aut(B^\times)$ ($a\mapsto \psi_a$, where $\psi_a$ is the restriction of 
		$\phi_a$ onto $B^\times$) by Lemma~\ref{isomon}.  Furthermore, the semi-direct product (of groups)  $B^\times \rtimes_{\psi} A^\times$  with respect to   $\psi$ is defined with operation inherited from that of the semi-direct product (of monoids) $B  \rtimes_{\phi} A$. 
	\end{remark}
	
	\begin{proposition}\label{unisemi}
		
		Let $A$ and $B$ be monoids and $ \phi\colon A\longrightarrow End(B)$ ($a\mapsto \phi_a$, $\phi_a\colon B \longrightarrow B$ is a homomorphism)  be a homomorphism. Let    $ \psi\colon A^\times\longrightarrow Aut(B^\times)$ be the homomorphism   defined by  $a\mapsto \psi_a$, where $\psi_a$ is the restriction of $\phi_a$ onto $A^\times$, then 
		$$(B\rtimes_{\phi} A)^\times =B^\times \rtimes_{\psi} A^\times.$$

	\end{proposition}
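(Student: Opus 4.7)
The plan is to prove the two set-theoretic inclusions $B^\times \rtimes_\psi A^\times \subseteq (B\rtimes_\phi A)^\times$ and $(B\rtimes_\phi A)^\times \subseteq B^\times \rtimes_\psi A^\times$, where Remark~\ref{restrictmonho} already guarantees that the operation on $B^\times \rtimes_\psi A^\times$ is the one inherited from $B\rtimes_\phi A$, so any equality of underlying sets upgrades automatically to a group (respectively, submonoid) equality.

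For the first (easy) inclusion, I would take a typical element $(a,b)$ with $a\in A^\times$ and $b\in B^\times$ and write down an explicit candidate inverse. Motivated by the usual formula for groups quoted right before Lemma~\ref{semdm}, I would try $(a^{-1},\phi_{a^{-1}}(b^{-1}))$ and verify directly, using the product rule $(a_1,b_1)(a_2,b_2)=(a_1a_2,b_1\phi_{a_1}(b_2))$, that both compositions give $(1_A,1_B)$. The key ingredient here is Lemma~\ref{isomon}, which tells me that $\phi_a\in\operatorname{Aut}(B)$ for $a\in A^\times$ with $\phi_{a^{-1}}=\phi_a^{-1}$ and $\phi_{1_A}=\mathrm{id}_B$, so the cross-terms collapse cleanly.

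For the reverse inclusion I would take $(a,b)\in(B\rtimes_\phi A)^\times$ with two-sided inverse $(c,d)$ and expand the identities $(a,b)(c,d)=(1_A,1_B)=(c,d)(a,b)$ coordinate-wise. The first coordinate immediately yields $ac=ca=1_A$, so $a\in A^\times$ with inverse $c$. The second coordinate gives $b\,\phi_a(d)=1_B$ and $d\,\phi_c(b)=1_B$; the first of these produces $\phi_a(d)$ as a right inverse of $b$, and to exhibit a left inverse I would apply the endomorphism $\phi_a$ to the second equation, obtaining $\phi_a(d)\,\phi_{ac}(b)=\phi_a(1_B)=1_B$, which, since $ac=1_A$ and $\phi_{1_A}=\mathrm{id}_B$, simplifies to $\phi_a(d)\cdot b=1_B$. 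Hence $b$ has $\phi_a(d)$ as a two-sided inverse and $b\in B^\times$.

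The main (and really only) subtle point is the left-inverse step: from the monoid data we only get a right inverse of $b$ for free, and one must invoke the homomorphism property of $\phi$ together with $ac=1_A$ to convert the second equation into a usable left-sided identity. Once that conversion is in hand, the rest is bookkeeping, and combining both inclusions gives $(B\rtimes_\phi A)^\times=B^\times\rtimes_\psi A^\times$ as monoids, which is precisely the statement.
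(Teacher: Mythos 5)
Your proposal is correct and follows essentially the same route as the paper: the hard direction is handled identically, by expanding the two-sided inverse coordinatewise and applying $\phi_a$ to the equation $d\,\phi_{a^{-1}}(b)=1_B$ to convert the right inverse of $b$ into a two-sided one. The only cosmetic difference is in the easy inclusion, where you write out the explicit inverse $(a^{-1},\phi_{a^{-1}}(b^{-1}))$ while the paper simply observes that $B^\times\rtimes_\psi A^\times$ is already a group under the restricted operation.
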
 
	\begin{proof}
		By the definition of $\psi$, the operation of $B^\times \rtimes_{\psi}A^\times $ is just the operation of $B  \rtimes_{\phi}A $ restricted  into the set $A^\times \bigtimes B^\times$. Thus, 
		$B^\times \rtimes_{\psi}A^\times \subseteq (B  \rtimes_{\phi}A)^\times$,   and hence  	$B^\times \rtimes_{\psi}A^\times$ is a subgroup of $(B  \rtimes_{\phi}A)^\times$. For the other  inclusion, consider  $(a,b)\in (B  \rtimes_{\phi}A)^\times$. Then there exists $(a_1,b_1)\in (B  \rtimes_{\phi}A)^\times$ such that 
		\[(a,b)(a_1,b_1)=(aa_1,b\phi_a(b_1))=(1_A,1_B)=(a_1a,b_1\phi_{a_1}(b))=(a_1,b_1)(a,b).\] 
		Therefore, we have the following two equalities 
		$$aa_1=a_1a =1_A, \text{ and }$$
		$$	b\phi_a(b_1)=b_1\phi_{a_1}(b)=1_B.$$
		The first equality implies that $a\in A^\times$ and $a_1=a^{-1}$, while the other implies 
		$b$ is right invertible.  Also, applying the automorphism  $\phi_a$ to the equality $b_1\phi_{a_1}(b)=1_B$ yields
		since $a_1=a^{-1}$,
		$$\phi_a(b_1)b=\phi_a(b_1)\phi_{1_A}(b)=\phi_a(b_1)\phi_{aa^{-1}}(b)=\phi_a(b_1)\phi_a( \phi_{a_1}  (b))=\phi_a(b_1\phi_{a_1}(b))=\phi_a(1_B)=1_B,$$
		hence $\phi_a(b_1)b=1_B$, and $b$ is a left invertible. Thus, $b\in B^\times$ (being left and right invertible). Therefore, $(a,b)\in B^\times \rtimes_{\psi} A^\times$.
		\end{proof}
	
	\begin{proposition}\label{semiprod}
		Fix $2\le k \le n$. Let $\mathcal{L}^n_{k}$ denote the set of vector-polynomials of the form
				$(k:u;f)$,
		where $f \in R_{[k-1]}$ and $u\in R_{[k-1]}^\times$. Then
		\begin{enumerate}
			\item $\mathcal{L}^n_{k}$	is a  subgroup of the group $TR_n$;\label{1}
			\item $\mathcal{L}^n_{k} \cong R_{[k-1]} \rtimes R_{[k-1]}^\times$, that is, $\mathcal{L}^n_{k}$ is  the semi-direct product of additive group of the polynomial ring in $k-1$ variables by its group of units;\label{2}
			\item $\mathcal{L}^n_{k}\lhd TR_n$ if and only if $n=k$. In this case, $$	TR_n \cong      \mathcal{L}^n_{n}  \rtimes TR_{ n-1}. \label{3}$$
		\end{enumerate}
			\end{proposition}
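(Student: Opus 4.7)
For part (1), I would check directly that $\mathcal{L}^n_k$ is a subgroup of $TR_n$ using Lemma~\ref{relat2}. Every element $(k:u;f)\in\mathcal{L}^n_k$ has $u\in R_{[k-1]}^{\times}$, so Lemma~\ref{relat2}(2) already places it in $TR_n$. Closure, identity and inverse then follow from the formulas $(k:u;f)(k:v;g)=(k:uv;f+ug)$ and $(k:u;f)^{-1}=(k:u^{-1};-u^{-1}f)$, because $R_{[k-1]}^{\times}$ is a multiplicative group and the identity $(k:1;0)$ has the required form.

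For part (2), my plan is to lift Lemma~\ref{semdm} through Proposition~\ref{unisemi}. Lemma~\ref{semdm} provides the monoid isomorphism $\mathcal{ML}^n_k\cong R_{[k-1]}\rtimes UV(R_{[k-1]})$. The first factor is an additive group and hence equals its own group of units, while the multiplicative units of the monoid $UV(R_{[k-1]})$ coincide with $R_{[k-1]}^{\times}$, because any multiplicative inverse in $R_{[k-1]}$ of a unit-valued polynomial is automatically unit-valued. Applying Proposition~\ref{unisemi} therefore gives $(\mathcal{ML}^n_k)^{\times}\cong R_{[k-1]}\rtimes R_{[k-1]}^{\times}$, and Lemma~\ref{relat2}(2) combined with part (1) identifies the left-hand side with $\mathcal{L}^n_k$.

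For part (3), the case $n=k$ also follows via Proposition~\ref{unisemi}, applied now to the monoid decomposition $\mathcal{MT}_n\cong\mathcal{ML}^n_n\rtimes\mathcal{MT}_{n-1}$ of Proposition~\ref{Tsemd}. Taking units yields $TR_n\cong(\mathcal{ML}^n_n)^{\times}\rtimes(\mathcal{MT}_{n-1})^{\times}=\mathcal{L}^n_n\rtimes TR_{n-1}$, and the first factor of a semi-direct product is automatically normal. The main obstacle is the converse direction: when $k<n$ I must exhibit a concrete conjugate that leaves $\mathcal{L}^n_k$. I would pick $\alpha=(k:1;1)\in\mathcal{L}^n_k$ and $\beta=(k+1:1;x_k)\in TR_n$, the latter being available precisely because $k<n$, and compute $\beta^{-1}=(k+1:1;-x_k)$ via Lemma~\ref{relat2}(2). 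Using Definition~\ref{compose}, a short direct calculation shows that the $(k+1)$-th coordinate of $\beta^{-1}\alpha\beta$ evaluates to $-1+x_{k+1}$ rather than $x_{k+1}$, so the conjugate does not fix the $(k+1)$-th slot and hence cannot belong to $\mathcal{L}^n_k$. The subtle point is choosing a conjugator acting on a slot of index strictly greater than $k$, which is exactly what forces the restriction $k<n$.
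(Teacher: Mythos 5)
Your proposal is correct and follows essentially the same route as the paper: part (2) by combining Lemma~\ref{semdm} with Proposition~\ref{unisemi}, the case $n=k$ of part (3) by applying Proposition~\ref{unisemi} to the decomposition of Proposition~\ref{Tsemd}, and non-normality for $k<n$ by an explicit conjugation that disturbs the $(k+1)$-th slot. The only (immaterial) differences are that you verify part (1) by a direct subgroup check rather than identifying $\mathcal{L}^n_k$ as $(\mathcal{ML}^n_k)^\times$ via Theorem~\ref{Trele}, and you use the witness $(k:1;1)$ conjugated by $(k+1:1;x_k)^{-1}$ where the paper uses $(k:1;x_{k-1})$ conjugated by $(k+1:1;x_k)$; your computation of the $(k+1)$-th coordinate as $-1+x_{k+1}$ checks out.
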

	\begin{proof}
		(\ref{1})  Let $\mathcal{ML}^n_{k}$ be the monoid  defined in Lemma~\ref{semdm} containing all  vector-polynomials of the form
		$(k:u;f)$,
		where $f \in R_{[k-1]}$ and $u\in UV(R_{[k-1]})$.
		Then, by Theorem~\ref{Trele} $${\mathcal{ML}^n_{k}}^\times= \mathcal{ML}^n_{k}\cap \mathcal{MT}_n^\times=\mathcal{ML}^n_{k}\cap TR_n=\mathcal{L}^n_{k}.$$
		(\ref{2})
		By Lemma~\ref{semdm}, $\mathcal{ML}^n_{k} \cong R_{[k-1]} \rtimes UV(R_{[k-1]})$ (semi-direct product of monoids). But, by definition,  $UV(R_{[k-1]})^\times=R_{[k-1]}^\times$; and $ (R_{[k-1]},+)^\times=R_{[k-1]}$.
		Thus, the semi-direct product 
		$R_{[k-1]} \rtimes R_{[k-1]} ^\times$ is defined by Remark~\ref{restrictmonho}.
		Now, $$\mathcal{L}^n_{k}\displaystyle_{=}^{(\ref{1})}{\mathcal{ML}^n_{k}}^\times \displaystyle_{\cong}^{Lemma~\ref{semdm}}  (R_{[k-1]} \rtimes UV(R_{[k-1]}))^\times\displaystyle_{=}^{Proposition~\ref{unisemi}} 
		R_{[k-1]} \rtimes R_{[k-1]}^\times.$$
		
		(\ref{3}) Assume  that $k=n$. Then,  by Theorem~\ref{constmon}, Remark~\ref{restrictmonho} and Proposition~\ref{unisemi}, we have $$TR_n=\mathcal{MT}_n^\times \cong      (\mathcal{ML}^n_{n}  \rtimes \mathcal{MT}_{ n-1})\!^\times=  \mathcal{L}^n_{n}  \rtimes TR_{ n-1}.$$
		This also, shows that $\mathcal{L}^n_{n}\lhd TR_n$. 
		Now, assume that  $2 \le k<n$ and let $(k :1;x_{k-1})\in \mathcal{L}^n_{k}$. 	So, if  $(k+1:1;x_k)\in TR_{n}\setminus \mathcal{L}^n_{k}$, 	$(k+1:1;x_k)^{-1}= (k+1:1;-x_k)$ by Lemma~\ref{relat2}. 
		Then,  we have by direct calculations (Definition~\ref{compose}), 
		\begin{align*}
			(k+1:1;x_k)(k :1;x_{k-1})(k+1:1;-x_k) &	= (x_1,\ldots,x_{k-1},x_{k-1}+x_{k},x_{k-1}+x_{k+1},x_{k+2},\ldots,x_n) \\
			& =(k :1;x_{k-1})(k+1:1; x_{k-1})  \notin \mathcal{L}^n_{k}.\qedhere
		\end{align*} 
			\end{proof}
	As a result of Proposition~\ref{semiprod}, we are able to decompose the group $TR_n$ into an iterated semi-direct product of $\mathcal{L}^n_{n},\ldots, \mathcal{L}^2_{2}$, and  $TR_1=Aut_R(R[x])$. Alternatively, we can replace  $\mathcal{L}^k_{k} $ by $ R_{[k-1]}]  \rtimes R_{[k-1]}^\times$ ($k=2,\ldots, n$) to obtain another equivalent decomposition. 
	
	\begin{theorem}\label{decoftg}
		Let $n>1$ and let $TR_n$ be the triangular group. Then \begin{enumerate}
			
			\item 	$TR_n  \cong  \mathcal{L}^n_{n} \rtimes \cdots  \rtimes \mathcal{L}^2_{2}\rtimes  Aut_R(R[x])$;

			\item $	TR_n \cong   ( R_{[n-1]}  \rtimes R_{[n-1]}^\times)  \rtimes \cdots \rtimes ( R {[x_1]}  \rtimes R{[x_1]}^\times)\rtimes Aut_R(R[x])$.
		\end{enumerate}
		Moreover, $TR_n$ is solvable if and only if $Aut_R(R[x])$ is solvable. In particular, if $R$ is a domain, $TR_n$ is solvable, and
		$	TR_n \cong   ( R_{[n-1]}]  \rtimes R^\times)  \rtimes \cdots \rtimes ( R {[x_1]}  \rtimes R^\times)\rtimes  ( R    \rtimes R^\times)$.
		
	\end{theorem}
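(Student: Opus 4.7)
The plan is to iterate Proposition~\ref{semiprod}(3), which provides the one-step identification $TR_n \cong \mathcal{L}^n_n \rtimes TR_{n-1}$. Unrolling this relation $n-1$ times, and recognising that the base case $TR_1$ is the group of units of $\mathcal{MT}_1 = \mathcal{MP}(R)$, which by Notation~\ref{MTSP} and Definition~\ref{pertermin}(3) is $Aut_R(R[x])$, immediately yields the first decomposition. For the second decomposition, I would substitute $\mathcal{L}^k_k \cong R_{[k-1]} \rtimes R_{[k-1]}^\times$ from Proposition~\ref{semiprod}(2) into each factor of (1); the outer actions pull back through these isomorphisms with no trouble, because Proposition~\ref{semiprod}(2) is a genuine group isomorphism and the action of $TR_{k-1}$ on $\mathcal{L}^k_k$ simply transfers.

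For the solvability statement, I would invoke the standard fact that a semi-direct product of two groups is solvable if and only if each factor is solvable (equivalently, solvability is preserved under group extensions). Every factor other than $Aut_R(R[x])$ in the decomposition~(2) has the shape $R_{[k-1]} \rtimes R_{[k-1]}^\times$, in which $(R_{[k-1]},+)$ is abelian and $R_{[k-1]}^\times$ is abelian as a subgroup of units of a commutative ring; hence each such factor is metabelian, and in particular solvable. A short induction on $n$ via the one-step equivalence then gives $TR_n$ solvable if and only if $Aut_R(R[x])$ is solvable.

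For the "in particular" claim, assume $R$ is a domain. Since $R$ is then reduced, Remark~\ref{Gilmaut} tells us that $Aut_R(R[x])$ consists precisely of the invertible linear polynomials $a + bx$ with $a \in R$ and $b \in R^\times$. A direct composition computation, $(a+bx)\circ (c+dx) = (a+bc) + (bd)x$, identifies $Aut_R(R[x]) \cong R \rtimes R^\times$ with $R^\times$ acting on $(R,+)$ by multiplication; both factors being abelian, $Aut_R(R[x])$ is metabelian and so $TR_n$ is solvable by the previous paragraph. Moreover, over a domain the units of every polynomial ring coincide with $R^\times$ by Remark~\ref{uni-v properties}(2), and the outer actions restrict consistently through Remark~\ref{restrictmonho}, producing the displayed explicit formula. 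I foresee no serious obstacle: Proposition~\ref{semiprod} has already done the structural work, and what remains is the clean iteration together with the mild bookkeeping of verifying that successive actions in (1), (2), and the domain-case specialisation compose correctly through the group isomorphisms substituted at each step.
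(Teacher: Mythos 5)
Your proposal is correct and follows essentially the same route as the paper: iterate Proposition~\ref{semiprod}(3) for the first decomposition, substitute Proposition~\ref{semiprod}(2) for the second, use closure of solvability under extensions together with the (metabelian) factors $R_{[k-1]}\rtimes R_{[k-1]}^\times$ for the solvability equivalence, and in the domain case identify $Aut_R(R[x])\cong R\rtimes R^\times$ and $R_{[i]}^\times=R^\times$. Your added care about the actions transferring through the isomorphisms and about the reverse implication for solvability is a slight refinement of the paper's brisker wording, but not a different argument.
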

	\begin{proof}
		By Proposition~\ref{semiprod},   $	TR_n \cong      \mathcal{L}^n_{n}  \rtimes TR_{ n-1} $.  Then iterated application of this relation yields the first expression.  Again, by   Proposition~\ref{semiprod}, we have $\mathcal{L}^i_{i} \cong R_{[i-1]} \rtimes R_{[i-1]}^\times$ for $i=2,\ldots,n$.  Combining these results yields the second expression. Moreover, it is well known that Abelian groups are solvable and extensions
		of solvable groups are solvable. Therefore, by the second expression, $TR_n$ is solvable if and only if $Aut_R(R[x])$ is solvable.  In particular, if $R$ is a domain, then 
		$Aut_R(R[x])\cong  R  \rtimes R ^\times$ and for each $i$, $R_{[i]}^\times=R^\times$. Thus, $TR_n$ is solvable, and  the claim follows.
	\end{proof}
	\begin{remark}\label{coincidestrctures}
		\begin{enumerate}

			\item In algebraic geometry, the Jonqui\`eres (classical) triangular group $KTR_n$ is defined to be the group of all vector-polynomials of the form  \begin{equation}\label{trieq}
				\Vec{f}=(
				a_1x_1+b_0,
				f_1(x_1)+a_2x_2,
				\ldots,
				f_{n-1}(x_1,\ldots,x_{n-1})+a_nx_n 
				),
			\end{equation}

			where $a_1,\ldots,a_n\in R^\times$ and $b\in R$  (see \cite{Autpoly1}). In general, we have the inclusions: \begin{equation}\label{eqstrctures}
				KTR_n\subseteq TR_n \subseteq\mathcal{MT}_n.
			\end{equation} 
			However,  by Theorem~\ref{decoftg} and Theorem~\ref{mondec}, one can  easily  see that 
			$$KTR_n= TR_n =\mathcal{MT}_n $$  
			
			if and only if:		\begin{enumerate}
				\item $UV(R_{[i]})=R_{[i]}^\times=R^\times$.\label{coa}
				
				\item $ \mathcal{MP}(R)=Aut_R(R[x])=\{ax+b \colon a\in R^\times \}$.\label{cob}
			\end{enumerate}

			In the following  examples, we demonstrate how the  validity of  the previous two conditions determines the nature  of the inclusions of relation~(\ref{eqstrctures}).
						
			\begin{itemize}
				\item When $R$ is a $D$-ring (see for example \cite{Drings}), $R$ satisfies condition (\ref{coa}) by the definition of $D$-rings. Then
				$KTR_n= TR_n$ (note that it is enough here to have $R_{[i]}^\times=R^\times$). 
					\item If $R$ is a non-algebraically closed field  (for example, real numbers and finite fields),   $$KTR_n= TR_n \subsetneq\mathcal{MT}_n.$$
				In this case, it is clear that   $R_{[i]}^\times=R^\times$  (for each $i$) and  $Aut_R(R[x])$ consists of all linear polynomials.
				Also, since $R$ is not an algebraically closed field, there is an irreducible polynomial $f$ of degree $\ge 2.$ Evidently, $f\in UV(R_{[i]})\setminus  R_{[i]}^\times$.
				\item When $R$ is not reduced (i.e. containing a non-zero nilpotent), we have 
				$$KTR_n\subsetneq TR_n \subseteq \mathcal{MT}_n.$$  Here, we have by Remark~\ref{uni-v properties}, $1+rx_1 \in R_{[i]} ^\times\ne R^\times$ for any non-zero nilpotent $r$. In particular, if $R$ is a finite local ring that is not a field, 
				$$KTR_n\subsetneq TR_n \subsetneq \mathcal{MT}_n,$$  since in this case we also have 
				$1+(x^q-x)\in UV(R[x])\setminus R[x]^\times$.
			\end{itemize}
			\item In Equation~(\ref{trieq}), if $a_1=\cdots=a_n=1$, we get the so called unitriangular group of automorphisms.
			In~\cite{Trgroup}, a decomposition of the unitriangular group on fields  of characteristic zero  into an iterated semi-product of Abelian groups is given.
		\end{enumerate}	
	\end{remark}
	For algebraically closed fields we have the following result.
	
	\begin{proposition}\label{algcase}
		Let $R$ be an algebraically closed field.  
		\begin{enumerate} 
			\item If $R$ of characteristic zero, then   
			$$KTR_n= TR_n =\mathcal{MT}_n.$$
			\item If $R$ of characteristic $p\ne 0$, then
			$$KTR_n= TR_n \subsetneq\mathcal{MT}_n.$$
		\end{enumerate}
	\end{proposition}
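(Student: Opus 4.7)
The plan is to invoke Remark~\ref{coincidestrctures}, which reduces the proposition to verifying the two conditions (a) $UV(R_{[i]}) = R_{[i]}^\times = R^\times$ and (b) $\mathcal{MP}(R) = Aut_R(R[x]) = \{ax+b \colon a \in R^\times\}$. Since $R$ is a field (hence reduced), Remark~\ref{Gilmaut} immediately gives $Aut_R(R[x]) = \{ax+b \colon a \in R^\times\}$ and $R_{[i]}^\times = R^\times$ in both parts; feeding these into the decomposition of Theorem~\ref{decoftg} and comparing with the definition of $KTR_n$ from equation~(\ref{trieq}) yields $KTR_n = TR_n$ regardless of characteristic. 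So both parts reduce to deciding when $UV(R_{[i]}) = R^\times$ and when $\mathcal{MP}(R) = Aut_R(R[x])$.

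I would first establish $UV(R_{[i]}) = R^\times$ (in both characteristics) by induction on $i$. Given a non-constant $f \in R_{[i]}$, write $f = \sum_k c_k(x_2,\ldots,x_i)\, x_1^k$ with a leading coefficient $c_d \neq 0$ as a polynomial. If $d \geq 1$, then since $R$ is infinite one can pick $(a_2,\ldots,a_i)$ with $c_d(a_2,\ldots,a_i) \neq 0$, so $f(x_1,a_2,\ldots,a_i)$ has degree $\geq 1$ in $x_1$ and algebraic closure furnishes a root, contradicting unit-valuedness. If $d = 0$, then $f \in R[x_2,\ldots,x_i]$ is non-constant and the inductive hypothesis applies; the base case $i = 1$ is just algebraic closure. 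Hence condition~(a) holds in the present setting.

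For part (1), the remaining task is to show $\mathcal{MP}(R) \subseteq Aut_R(R[x])$ in characteristic zero. If $f \in R[x]$ has degree $n \geq 2$, then $f'$ has degree $n - 1 \geq 1$ (using $\operatorname{char} R = 0$), hence a root $\alpha \in R$, so $f(x) - f(\alpha) = (x-\alpha)^m g(x)$ with $g(\alpha) \neq 0$ and $2 \leq m \leq n$. If $m < n$, then $g$ is non-constant and has some root $\beta \neq \alpha$, giving $f(\beta) = f(\alpha)$; if $m = n$, then $f(x) = c(x-\alpha)^n + f(\alpha)$, and choosing any $n$-th root of unity $\omega \neq 1$ in $R$ (which exists since $n \geq 2$ and $R$ is algebraically closed of characteristic zero) yields, for any $x_0 \neq \alpha$, a point $y = \alpha + \omega(x_0 - \alpha) \neq x_0$ with $f(y) = f(x_0)$. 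Either way $f$ is not injective, so condition~(b) holds and part~(1) follows from Remark~\ref{coincidestrctures}.

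For part (2), condition~(b) fails: the Frobenius $f(x) = x^p$ is bijective on the algebraically closed field $R$ and so is a permutation polynomial, but it is not linear; hence the vector-polynomial $(x_1^p, x_2, \ldots, x_n)$ lies in $\mathcal{MT}_n \setminus TR_n$, giving $TR_n \subsetneq \mathcal{MT}_n$. Together with $KTR_n = TR_n$ established in the first paragraph, this yields $KTR_n = TR_n \subsetneq \mathcal{MT}_n$. The principal difficulty is the characteristic-zero step for condition~(b); once that non-injectivity argument is settled, everything else is routine bookkeeping with the structure theorems of Section~\ref{sec4}.
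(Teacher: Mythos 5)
Your proposal is correct and follows the same overall skeleton as the paper's proof: reduce everything to the two conditions of Remark~\ref{coincidestrctures}, verify $UV(R_{[i]})=R_{[i]}^\times=R^\times$ for any algebraically closed field to get $KTR_n=TR_n$, and then settle $\mathcal{MP}(R)$ versus $Aut_R(R[x])$ separately in each characteristic, using the Frobenius $x^p$ as the witness in positive characteristic (exactly as the paper does via Remark~\ref{comparemogr}). The one genuine difference is in the two analytic inputs. For the statement that $UV(R_{[i]})=R^\times$, the paper disposes of this in one line (``every non-constant polynomial has a root over an algebraically closed field''), whereas you supply the induction on the number of variables that makes the multivariable case rigorous; this is a worthwhile expansion of a step the paper leaves implicit. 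For the characteristic-zero step, the paper simply cites an external theorem asserting that an algebraically closed field of characteristic zero admits no non-linear permutation polynomial, while you give a self-contained elementary proof (root of $f'$, factor out $(x-\alpha)^m$, and in the degenerate case $f=c(x-\alpha)^n+f(\alpha)$ use a nontrivial $n$-th root of unity). Your argument is sound --- in particular the case split on $m<n$ versus $m=n$ is handled correctly, and the existence of $\omega\ne 1$ uses characteristic zero exactly where it must --- so what you gain is independence from the cited reference at the cost of about a paragraph; the paper gains brevity by outsourcing precisely the step you identify as the principal difficulty.
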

	\begin{proof}
		First assume that $R$ is any algebraically closed field. Then  $UV(R_{[i]})=R_{[i]}^\times=R^\times$ since every
		non-constant polynomial has a root over an algebraically closed field. Also, since $R$ is a field,  $Aut_R[x]=\{ax+b \colon a\in R^\times \}$. Thus, $KTR_n= TR_n $, by Remark~\ref{coincidestrctures}.
		
		Now, assume that $R$ is an  algebraically closed field of characteristic zero. Then $ \mathcal{MP}(R)=Aut_R(R[x])$ since $R$ has no non-linear permutation polynomial by  \cite[Theororem~6]{nolineaper}. This together with the first part of the proof implies    $TR_n =\mathcal{MT}_n $ by Remark~\ref{coincidestrctures}. This proves (1).
		
		For the  case where  $R$ is an  algebraically closed field of characteristic $p\ne 0$, we have that $x^p\in  \mathcal{MP}(R)\setminus Aut_R(R[x])$  by Remark~~\ref{comparemogr} (\ref{nonliperm}).
		Thus, $TR_n \subsetneq\mathcal{MT}_n$ by Remark~\ref{coincidestrctures}.
			\end{proof}
	\begin{remark}
		Unlike  solvability,  the decomposition of Theorem~\ref{decoftg} can not determine whether the group
		$TR_n$ is nilpotent or not. However, in the following we notice  for a large class of rings that $TR_n$ is not nilpotent.		 
		\begin{enumerate}
			\item  Let $n>1$ and $R$ is an infinite field. Then $TR_n$ is not nilpotent.
			\item Let $n>2$ and $R$ be  a commutative ring with 		
			Jacobson radical $J(R)$  such that $R/J(R)$ is the finite field of $q$ elements $\mathbb{F}_q$.
			\end{enumerate}	
			To show these assertions, it is enough to find a subgroup of $TR_n$, which is not nilpotent. 
			
			Indeed the first assertion follows from the fact that the unitriangular group (defined in  ~Remark~\ref{coincidestrctures}) is not nilpotent  (see~\cite[Theorem 2.1]{Trgroup}).
			To prove the second assertion, we consider first the case $R=\mathbb{F}_q$. But, then the unitrianguar group is not nilpotent (see \cite[Theorem 3]{Leshchenkounitrfinitef}).
			For the case $J(R)\ne\{0\}$, we note that there exists a natural epimorphism $\pi \colon R \longrightarrow  R/J(R)$ given by $\pi(a) = a \mod J(R)$. Evidently $\pi$ can be extended in  to an epimorphism of monoids $\tilde{\pi} \colon R_{[n]}^n \longrightarrow  ({R/J(R)})_{[n]}^n$, and 
			one easily see that $\tilde{\pi}(UT_n(R))= UT_n(R/J(R))=UT_n(\mathbb{F}_q) $, where $UT_n(A)$ stands
			for the unitriangular group over the ring $A$. Hence $UT_n(R)$ is not nilpotent since $\tilde{\pi}(UT_n(R))$ is not nilpotent. Thus, $TR_n$ is not nilpotent.
	
			\end{remark}
	\subsection{The structure of the induced permutations group $\pi_n(\mathcal{MT}_n)$}
	Recall that $\pi_n$ stands for the natural homomorphism that  assigns for each $\vec{f}\in   R_{[n]} ^n$ its induced vector-function $\vec{F}\colon R^n \longrightarrow R^n$.
	
	In this subsection, we explore  the structure of the group $\pi_n(\mathcal{MT}_n)$ of permutations of $R^n$   induced by the elements of the triangular monoid $\mathcal{MT}_n$. We see that this group can be decomposed into an iterated semi-direct product of $n$ groups in a similar manner to what we have  already seen for the group $TR_n$.
	
	\begin{proposition} \label{imsemiprod}
		Let $R$ be a finite commutative ring and fix $2\le k \le n$. Let $\mathcal{ML}^n_{k}$ be the monoid of vector-permutation polynomials of the  form 	$(k:u;f)$,
		where $f \in R_{[k-1]}$ and $u\in UV(R_{[k-1]})$. Then
		\begin{enumerate}
			\item $\pi_n(\mathcal{ML}^n_{k})$	is a  subgroup of the group $\pi_n(\mathcal{MT}_n)$;\label{p1}
			\item $\pi_n(\mathcal{ML}^n_{k})\cong \PolFun[R^{k-1}] \rtimes \UPF[R^{k-1}] $, that is, $\mathcal{ML}^n_{k}$ is  the semi-direct product of the additive group of the ring of polynomial functions  (on $R$) in $k-1$ variables by its group of units;\label{p2}
			\item $\pi_n(\mathcal{ML}^n_{k}) \lhd \pi_n(\mathcal{MT}_n)$ if and only if $n=k$. In this case, $$	 \pi_n(\mathcal{MT}_n)\cong      \pi_n(\mathcal{ML}^n_{n})  \rtimes  \pi_n(\mathcal{MT}_{n-1}). \label{p3}$$
		\end{enumerate}			
	\end{proposition}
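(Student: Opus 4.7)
The plan is to parallel the three-part proof of Proposition~\ref{semiprod} at the level of induced permutations, making systematic use of the fact that $\pi_n$ is a monoid epimorphism and that $\pi_n(\mathcal{MT}_n)$ is a finite group by Theorem~\ref{constmon}. Part~(\ref{p1}) is then immediate: $\mathcal{ML}^n_k$ is a submonoid of $\mathcal{MT}_n$ by Lemma~\ref{semdm}, so $\pi_n(\mathcal{ML}^n_k)$ is a submonoid of the finite group $\pi_n(\mathcal{MT}_n)$, and every submonoid of a finite group is a subgroup.

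For~(\ref{p2}) I would define the map
\[ \eta\colon \pi_n(\mathcal{ML}^n_k)\longrightarrow \PolFun[R^{k-1}]\rtimes \UPF[R^{k-1}],\qquad \pi_n((k:u;f))\mapsto (U,F), \]
where $U\in \UPF[R^{k-1}]$ and $F\in \PolFun[R^{k-1}]$ denote the polynomial functions induced by $u$ and $f$, respectively. Well-definedness reduces to showing that $(k:u;f)\quv (k:v;g)$ on $R^n$ forces $u\quv v$ and $f\quv g$ on $R^{k-1}$, which is Lemma~\ref{uniqex} applied with $x_k$ playing the role of $x_{k+1}$ (equate the $k$-th output coordinates and evaluate at $x_k=0$ and $x_k=1$). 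The homomorphism property matches the composition formula $(k:u;f)(k:v;g)=(k:uv;f+ug)$ of Lemma~\ref{relat2} against the semi-direct operation $(U,F)(V,G)=(UV,F+UG)$. Surjectivity is obvious from the definition of $\PolFun[R^{k-1}]$, and injectivity follows by reversing the well-definedness step, so $\eta$ is an isomorphism.

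For~(\ref{p3}), the case $k=n$ is obtained by applying $\pi_n$ to the isomorphism $\mathcal{MT}_n\cong \mathcal{ML}^n_n\rtimes \mathcal{MT}_{n-1}$ of Proposition~\ref{Tsemd}, with $\mathcal{MT}_{n-1}$ viewed inside $\mathcal{MT}_n$ via the embedding $e_{n-1,n}$ of Remark~\ref{strcembedding}. I would verify that the action of $\mathcal{MT}_{n-1}$ on $\mathcal{ML}^n_n$ described in Lemma~\ref{Tsemdlma} respects the equivalence $\quv$ in each argument, so that it descends to a well-defined action of $\pi_n(\mathcal{MT}_{n-1})$ on $\pi_n(\mathcal{ML}^n_n)$; the resulting homomorphism $\pi_n(\mathcal{ML}^n_n)\rtimes \pi_n(\mathcal{MT}_{n-1})\to \pi_n(\mathcal{MT}_n)$ is surjective by construction, and injectivity (hence normality of $\pi_n(\mathcal{ML}^n_n)$) follows from a cardinality match with the monoid-level decomposition. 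For $k<n$, I would reuse the counterexample from the proof of Proposition~\ref{semiprod}(\ref{3}): the conjugate
\[ (k+1:1;x_k)(k:1;x_{k-1})(k+1:1;-x_k)=(x_1,\ldots,x_{k-1},x_{k-1}+x_k,x_{k-1}+x_{k+1},x_{k+2},\ldots,x_n) \]
induces a permutation whose $(k+1)$-th output coordinate at $(a_1,\ldots,a_n)$ is $a_{k-1}+a_{k+1}$, whereas every element of $\pi_n(\mathcal{ML}^n_k)$ acts as the identity on the $(k+1)$-th coordinate; choosing $a_{k-1}=1$ and $a_{k+1}=0$ separates the two permutations (using $1\ne 0$ in $R$), so this conjugate lies outside $\pi_n(\mathcal{ML}^n_k)$.

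The main obstacle is the $k=n$ direction of~(\ref{p3}): one must check that the semi-direct product structure of Proposition~\ref{Tsemd} genuinely survives the passage through $\pi_n$. This ultimately reduces to the compatibility fact that if $\vec{h}\quv \vec{l}$ on $R^{n-1}$ and $(n:u;f)\quv (n:v;g)$ on $R^n$, then $(n:u(\vec{h});f(\vec{h}))\quv (n:v(\vec{l});g(\vec{l}))$ on $R^n$, which is immediate from the definition of $\quv$ together with substitution of functions into functions. Once this compatibility is in hand, the rest of the descent argument is essentially formal.
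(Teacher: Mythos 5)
Your strategy coincides with the paper's almost everywhere: part~(\ref{p1}) is the same submonoid-of-a-finite-group observation; part~(\ref{p2}) transports the monoid isomorphism of Lemma~\ref{semdm} through $\pi_n$, using Lemma~\ref{uniqex} and Lemma~\ref{equivrela} for well-definedness and injectivity and Lemma~\ref{relat2} for the homomorphism property (the paper packages the same content as a split short exact sequence $1\to\pi_n(A)\to\pi_n(\mathcal{ML}^n_k)\to\pi_n(B)\to 1$ rather than writing the isomorphism to $\PolFun[R^{k-1}]\rtimes\UPF[R^{k-1}]$ directly); and the non-normality for $k<n$ reuses the same conjugate, where your direct comparison of the $(k+1)$-th output coordinates is in fact a little cleaner than the paper's contradiction argument via Lemma~\ref{equivrela} and the observation that $x_{k-1}\quv 0$ on $R$ is impossible.

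The one step that does not hold up as written is the injectivity of your map $\pi_n(\mathcal{ML}^n_n)\rtimes\pi_n(\mathcal{MT}_{n-1})\to\pi_n(\mathcal{MT}_n)$ in the $k=n$ case of~(\ref{p3}), which you attribute to ``a cardinality match with the monoid-level decomposition.'' No such match is available: $\pi_n$ is far from injective (and $\mathcal{MT}_n$ is generally infinite), so the bijection $\mathcal{MT}_n\cong\mathcal{ML}^n_n\rtimes\mathcal{MT}_{n-1}$ of Proposition~\ref{Tsemd} says nothing about $|\pi_n(\mathcal{MT}_n)|$; surjectivity of your map only yields $|\pi_n(\mathcal{MT}_n)|\le|\pi_n(\mathcal{ML}^n_n)|\cdot|\pi_n(\mathcal{MT}_{n-1})|$, and the reverse inequality is exactly what injectivity would give, so the argument as stated is circular. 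The repair is immediate and uses a tool you already invoke: if $\pi_n((h_1,\ldots,h_{n-1},f+ux_n))=\pi_n((l_1,\ldots,l_{n-1},g+vx_n))$, then Lemma~\ref{equivrela} together with Lemma~\ref{uniqex} gives $h_i\quv l_i$ for all $i$, $f\quv g$ and $u\quv v$ on $R$, which is precisely equality of the two pairs in $\pi_n(\mathcal{ML}^n_n)\rtimes\pi_n(\mathcal{MT}_{n-1})$. (The paper sidesteps the issue by exhibiting a split short exact sequence $1\to\pi_n(\mathcal{ML}^n_n)\to\pi_n(\mathcal{MT}_n)\to\pi_{n-1}(\mathcal{MT}_{n-1})\to 1$, with the surjection $P$ given by projection onto the first $n-1$ coordinates and the section given by appending $x_n$, and then invoking Definition~\ref{semidi}.) With that substitution your proof is complete.
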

	\begin{proof}
		Let $k$ be fixed.
		(\ref{p1}) Obvious. 
		
		(\ref{p2}) By Lemma~\ref{semdm}, $\mathcal{ML}^n_{k} \cong R_{[k-1]} \rtimes UV(R_{[k-1]})$.
		Then it is easy to see that the sets	
		$A=\{(k:1;f): f\in R_{[k-1]}\}$ and $B=\{(k:u;0): u\in UV(R_{[k-1]}\}$ are isomorphic to $R_{[k-1]}$ and $UV(R_{k-1})$ respectively.  
		Further, 		$\pi_n(A)$ and  $\pi_n(B)$ are subgroups of the finite group $\pi_n(\mathcal{ML}^n_{k})$ (being non empty and closed with respect to composition).

		Now, we claim that $\pi_n(A)\cong \PolFun[R^{k-1}] $ and  $\pi_n(B)\cong \UPF[R^{k-1}]$. To show  this claim, 
		let $\pi\colon R_{[k-1]}\longrightarrow \PolFun[R^{k-1}]$ be the natural  epimorphism that  maps every polynomial to its induced function on $R$. Then define a map $\Psi\colon \pi_n(A)\longrightarrow \PolFun[R^{k-1}]$ by $\Psi(\vec{F})=\pi(f)$, where $f\in R_{[k-1]}$ such that $\vec{F}=\pi_n((k:1;f))$.
		By Lemmas~\ref{uniqex} and~\ref{equivrela}, $\Psi$ is well defined and injective. Let $\vec{G}\in \pi_n(A)$. Then $\vec{G}=\pi_n((k:1;g))$ for some $g\in R_{[k-1]}$.
		Consider, \begin{align*}
			\Psi (\vec{F}\vec{G}) &=\Psi(\pi_n((k:1;f))\circ\pi_n((k:1;g)))= \Psi(\pi_n((k:1;f)(k:1;g)))\\
			&\displaystyle_{=}^{Lemma~\ref{relat2}}\Psi(\pi_n((k:1;f+g))=\pi(f+g)\\
			&=\pi(f)+\pi(g)=\Psi (\vec{F})+\Psi(\vec{G}). 
		\end{align*}
	This shows that $\Psi$ is a homomorphism. 
		By the definitions of the set $A$, and the maps  $\Psi$, $\pi_n$ and $\pi$, we have	
		$$\Psi(\pi_n(A))=\pi(R_{[k-1]})=\PolFun[R^{k-1}],$$ whence   $\Psi$ is surjective.
		 		Therefore, $\pi_n(A)\cong \PolFun[R^{k-1}]$.
			Similarly, one can show that $\pi_n(B)\cong \UPF[R^{k-1}]$.
		
		Now consider   the map $\Phi\colon \pi_n(\mathcal{ML}^n_{k})\longrightarrow \pi_n(B), 
		(\pi_n((k:u;f))\mapsto\pi_n((k:u;0)))$.  It is  obvious that $\Phi$ is surjective. 
		So, if $\pi_n((k:u;f)),\pi_n((k:v;g))\in \pi_n(\mathcal{ML}^n_{k})$,  we have
		\begin{align*}
			\Phi(\pi_n((k:u;f))\circ\pi_n((k:v;g))) & = \Psi(\pi_n((k:u;f)(k:v;g)))\\
			& =\Phi(\pi_n((k:uv;f+ug))=(\pi_n((k:uv;0))\\
			& =\pi_n((k:u;0)(k:v;0)) =\pi_n((k:u;0)\circ \pi_n((k:v;0))\\
			& = \Phi(\pi_n((k:u;f)))\circ\Phi(\pi_n((k:v;g))). 
		\end{align*}
	Thus, $\Phi$ is an epimorphism.
		Now, if $j \colon \pi_n(B) \longrightarrow \pi_n(\mathcal{ML}^n_{k})$ is the inclusion,
		then evidently, $\Phi j=id_{\pi_n(B)}$. Therefore, the following extension splits
		$$1 \rightarrow \pi_n(A) \xrightarrow{i} \pi_n(\mathcal{ML}^n_{k}) \xrightarrow{\Phi} \pi_n(B) \rightarrow 1.$$
		Thus, $$\pi_n(\mathcal{ML}^n_{k})=\pi_n(A)\rtimes\pi_n(B)\cong \PolFun[R^{k-1}] \rtimes \UPF[R^{k-1}].$$
		(\ref{p3})
		Let $\vec{F}\in  \pi_n(\mathcal{MT}_n)$. Then $\vec{F}=\pi_n(\vec{f})$ for some $\vec{f}=(f_1,\ldots,f_{n-1},f_n)$ with $f_i\in R_{[i]}$ for $i=1,\ldots,n$ (by the definition of the triangular monoid $\mathcal{MT}_n$). 
		Define  $P \colon \pi_n(\mathcal{MT}_n) \longrightarrow \pi_{n-1}(\mathcal{MT}_{n-1})$ by
		letting $P(\vec{F})= \pi_{n-1}((f_1,\ldots,f_{n-1}))$. Clearly, $P$ is an epimorphism. Also, 
		the map  $j \colon \pi_{n-1}(\mathcal{MT}_{n-1}) \longrightarrow \pi_n(\mathcal{MT}_n)$, $(\pi_{n-1}((f_1,\ldots,f_{n-1})))\mapsto \pi_{n}((f_1,\ldots,f_{n-1},x_n))$
		is obviously a monomorphism. Further, $Pj=id_{\pi_{n-1}(\mathcal{MT}_{n-1})}$. Hence,
		the following extension splits 
		$$1 \rightarrow \pi_n(\mathcal{ML}^n_{n})\xrightarrow{i} \pi_n(\mathcal{MT}_n) \xrightarrow{P} \pi_{n-1}(\mathcal{MT}_{n-1}) \rightarrow 1.$$
		Therefore,  $	 \pi_n(\mathcal{MT}_n)\cong      \pi_n(\mathcal{ML}^n_{k})  \rtimes  \pi_{n-1}(\mathcal{MT}_{n-1}), \text{ and }  \pi_n(\mathcal{ML}^n_{k})\lhd\pi_n(\mathcal{MT}_n).$
		
		Now, assume that $2\le k<n$. Then, we adopt the argument given in the last part of the proof of Proposition~\ref{semiprod} (but with applying the homomorphism $\pi_n$).  	So, if  $\pi_n((k+1:1;x_k))\in \pi_n(TR_n) \subseteq\pi_n(\mathcal{MT}_n)$, 	$(\pi_n((k+1:1;x_k)))^{-1}=\pi_n((k+1:1;x_k)^{-1})= \pi_n((k+1:1;-x_k))$. 
		Consider the element 	$\pi_n(k :1;x_{k-1})\in \pi_n(\mathcal{ML}^n_{k})$,  we have since $\pi_n$ is a homomorphism,
		\begin{align*}
			\pi_n((k+1:1;x_k)(k :1;&x_{k-1})(k+1:1;-x_k))\\	 
			&=\pi_n((x_1,\ldots,x_{k-1},x_{k-1}+x_{k},x_{k-1}+x_{k+1},x_{k+2},\ldots,x_n))\\
			&= \pi_n((k :1;x_{k-1}))\pi_n((k+1:1; x_{k-1})).
		\end{align*} 
		Thus, since $\pi_n((k :1;x_{k-1}))\in \pi_n(\mathcal{ML}^n_{k})$, $\pi_n((k :1;x_{k-1}))\pi_n((k+1:1; x_{k-1}))\notin \pi_n(\mathcal{ML}^n_{k})$ if and only if $\pi_n((k+1:1; x_{k-1}))\notin \pi_n(\mathcal{ML}^n_{k})$. Let us assume that $\pi_n((k+1:1; x_{k-1}))\in \pi_n(\mathcal{ML}^n_{k})$.
		By the definition of  $\pi_n(\mathcal{ML}^n_{k})$,
		there exist $\vec{f} \in \ \mathcal{ML}^n_{k}$ such that  $\pi_n(\vec{f})=\pi_n((k+1:1; x_{k-1}))$.
		Hence, there exist $f\in R_{[k-1]}$ and $u\in UV(R_{[k-1]})$ such that $\vec{f}=(k:u;f)$ by the definition of the monoid  $\mathcal{ML}^n_{k}$. But, then by Lemma~\ref{equivrela}, $(k:u;f)\quv(k+1:1; x_{k-1})$ on $R^n$, which implies that $x_{k+1}\quv  x_{k+1}+x_{k-1}$ on $R$ by Lemma~\ref{uniqex}.
		Thus, $x_{k-1}\quv 0$ on $R$, that is the polynomial $x_{k-1}$ maps $R^n$ to the zero, which is impossible since $R$ contains $1$. Therefore, $\pi_n((k+1:1; x_{k-1}))\notin \pi_n(\mathcal{ML}^n_{k})$
		and $\pi_n(\mathcal{ML}^n_{k})$ is not normal subgroup of $\pi_n(\mathcal{MT}_n)$.
	\end{proof}
	Similar to Theorem~\ref{decoftg}, we view  the group  $\pi_n(\mathcal{MT}_n)$ as
	iterated	semi-direct products of groups. Also, we obtain a counting formula for its  order. 

	\begin{theorem}\label{ddirec}
		Let $n>1$ and let $R$ be a finite commutative ring. Then \begin{enumerate}
			
			\item 	$\pi_n(\mathcal{MT}_n)  \cong  \pi_n(\mathcal{ML}^n_{n}) \rtimes \cdots  \rtimes \pi_2(\mathcal{ML}^2_{2})\rtimes  \PrPol$; \label{PP1}

			\item $	\pi_n(\mathcal{MT}_n) \cong   ( \PolFun[R^{n-1}]  \rtimes \UPF[R^{n-1}])  \rtimes \cdots \rtimes (\PolFun   \rtimes \UPF)\rtimes \PrPol$; \label{PP2}
			\item  $ |\pi_n(\mathcal{MT}_n)| =| \PrPol|\times \prod_{i=1}^{n-1}| \PolFun[R^{i}]|| \UPF[R^{i}]|. $ \label{PP3}
		\end{enumerate}
	\end{theorem}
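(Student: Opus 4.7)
The plan is to mimic the proof of Theorem~\ref{decoftg}, substituting Proposition~\ref{imsemiprod} for Proposition~\ref{semiprod}. All the substantive content already lives in Proposition~\ref{imsemiprod}; what remains is iteration, a base case, and a cardinality count.

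For part~(\ref{PP1}) I would begin with Proposition~\ref{imsemiprod}(\ref{p3}), which gives $\pi_n(\mathcal{MT}_n) \cong \pi_n(\mathcal{ML}^n_n) \rtimes \pi_{n-1}(\mathcal{MT}_{n-1})$. Iterating this relation $n-1$ times peels off one factor at a time, producing
$$\pi_n(\mathcal{MT}_n) \cong \pi_n(\mathcal{ML}^n_n) \rtimes \pi_{n-1}(\mathcal{ML}^{n-1}_{n-1}) \rtimes \cdots \rtimes \pi_2(\mathcal{ML}^2_2) \rtimes \pi_1(\mathcal{MT}_1),$$
where the iteration terminates at $\pi_1(\mathcal{MT}_1)$; by Notation~\ref{MTSP} and Definition~\ref{pertermin}, $\mathcal{MT}_1 = \mathcal{MP}(R)$, so $\pi_1(\mathcal{MT}_1) = \PrPol$, yielding~(\ref{PP1}). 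Part~(\ref{PP2}) then follows by replacing each factor $\pi_i(\mathcal{ML}^i_i)$ with its description from Proposition~\ref{imsemiprod}(\ref{p2}) as $\PolFun[R^{i-1}] \rtimes \UPF[R^{i-1}]$.

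For part~(\ref{PP3}) I would use the elementary fact that the order of a semi-direct product of finite groups equals the product of the orders of its factors. Applying this to the decomposition in~(\ref{PP2}) and reindexing with $j = i-1$ gives
$$|\pi_n(\mathcal{MT}_n)| = |\PrPol| \cdot \prod_{i=2}^{n} |\PolFun[R^{i-1}]| \, |\UPF[R^{i-1}]| = |\PrPol| \cdot \prod_{j=1}^{n-1} |\PolFun[R^{j}]| \, |\UPF[R^{j}]|.$$
There is no substantive obstacle in this theorem; the hard work was discharged in Proposition~\ref{imsemiprod}. The only point to check is that the semi-direct products nest according to the convention fixed earlier in Section~\ref{sec4} (computing from the left), so the iterated isomorphism in~(\ref{PP1}) is unambiguously defined at each step of the recursion.
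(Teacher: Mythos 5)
Your proposal is correct and follows essentially the same route as the paper: iterate Proposition~\ref{imsemiprod}(\ref{p3}) down to $\pi_1(\mathcal{MT}_1)=\PrPol$ for part~(\ref{PP1}), substitute Proposition~\ref{imsemiprod}(\ref{p2}) for part~(\ref{PP2}), and take orders for part~(\ref{PP3}). No substantive differences.
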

	\begin{proof}
		(\ref{PP3})  follows  from (\ref{PP2}).
		(\ref{PP2})   follows  from (\ref{PP1}) and Proposition~\ref{imsemiprod}-(\ref{p2}).
		By  Proposition~\ref{imsemiprod}-(\ref{p3}), $	 \pi_n(\mathcal{MT}_n)\cong      \pi_n(\mathcal{ML}^n_{n})  \rtimes  \pi_n(\mathcal{MT}_{n-1})$. Then one can use this relation and induction to prove (\ref{PP1}).
		We note to the reader that $\mathcal{MT}_1=\mathcal{MP}(R)$  and $\pi_1(\mathcal{MP}(R))=\PrPol$.
	\end{proof}

	The second decomposition of the previous theorem will allow obtaining more information about the group $\pi_n(\mathcal{MT}_n)$. 
	However, without loss of generality,  we restrict ourselves  to  finite local rings.
	We will need some facts about polynomial functions over finite local rings.
	In the proof of the following proposition, we use the fact that the order of  a finite local  commutative ring is a power of $q$, where $q$ is the number of elements of its residue field $\mathbb{F}_q$.
	\begin{proposition}\label{p-count} 
		Let $R$ be a finite local ring and $k\ge 1 $. Then 
		the group $(\PolFun[R^k],``+")$ is a $p$-group.
	\end{proposition}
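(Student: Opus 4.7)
The plan is to embed $\PolFun[R^k]$ as a subgroup of a clearly visible $p$-group and invoke Lagrange's theorem. No deep structure of polynomial functions is needed; only a cardinality count is used.

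First, I would unpack the hint. Since $R$ is a finite local ring with residue field $\mathbb{F}_q$ where $q = p^s$, the cited fact gives $|R| = q^t$ for some $t \geq 1$. (One quick way to see this is via the filtration $R \supseteq M \supseteq M^2 \supseteq \cdots \supseteq M^N = 0$, whose successive quotients are $\mathbb{F}_q$-vector spaces.) In particular $|R|$ is a power of $p$.

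Next I would observe that, by Definition~\ref{predef}, every polynomial function is a set-theoretic function $R^k \to R$, and the pointwise sum and additive inverse of polynomial functions are induced by the sum and negation of their representing polynomials, hence again polynomial functions. Therefore $(\PolFun[R^k], +)$ is a subgroup of the additive group $(R^{R^k}, +)$ of all functions $R^k \to R$ under pointwise addition.

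Finally, the ambient group has order
\[
|R^{R^k}| \;=\; |R|^{|R^k|} \;=\; |R|^{|R|^k},
\]
which is a power of $p$ by the first step. By Lagrange's theorem, the order of any subgroup, in particular of $\PolFun[R^k]$, is also a power of $p$, proving the claim. There is no real obstacle here; the only point to keep in mind is that the argument is purely about orders, so we do not need any description of how polynomial functions sit inside the function ring beyond the trivial fact that they are closed under addition and negation.
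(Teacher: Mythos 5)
Your proof is correct, but it takes a different (and in fact shorter) route than the paper. The paper realizes $\PolFun[R^k]$ as a \emph{quotient} of a $p$-group: it fixes a bound $n$ on the degrees of minimal representatives, forms the additive group $A$ of all polynomials in $R_{[k]}$ of degree at most $n$, computes $|A|$ as a power of $q$ (hence of $p$), and then applies the First Isomorphism Theorem to the surjection $A \to \PolFun[R^k]$. You instead realize $\PolFun[R^k]$ as a \emph{subgroup} of the $p$-group $(R^{R^k},+)$ of all set-theoretic maps $R^k \to R$ under pointwise addition, and invoke Lagrange. Both arguments rest on the same input, namely that $|R|$ is a power of $p$ for a finite local ring, and both are purely cardinality counts; yours avoids the discussion of minimal-degree representatives and the choice of the bound $n$ entirely, which makes it slightly cleaner, while the paper's construction of the group $A$ of bounded-degree polynomials is not reused elsewhere, so nothing is lost by your shortcut. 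One small point worth making explicit in your write-up: closure of $\PolFun[R^k]$ under pointwise addition and negation is exactly what the paper records when it states that $\PolFun[R^n]$ is a commutative ring under pointwise operations, so you may simply cite that fact rather than re-deriving it.
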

	\begin{proof}
		Let $F\in \PolFun[R^k]$, then there exists a polynomial $f_F\in R_{[k]}$ of minimal degree representing $F$.
		Since $\PolFun[R^k]$ is finite, there exists a positive number $n$ such that  $\deg f_F \le n$ for every $F\in \PolFun[R^k]$. Let \[ A = 
		\{ g\in R_{[k]}\colon \deg g\le n  \}.\] 
		Then $A$ is a group with respect to addition. Further, $|A|=|R|^{1+\sum_{i=1}^{k}l_i}=q^{({1+\sum_{i=1}^{k}l_i})}$, where $l_i$ is the number of monomials of degree $i$. Hence, $A$ is  a $p$-group since $q$ is a power of a prime $p$ (say $q=p^r$ for some positive integer).  Now, define  a map $\psi\colon A \longrightarrow \PolFun[{R^k}]$ as follows:
		if $F$ is the function induced by $f$, we let $\psi(f)=F$.  Clearly, $\psi$ is a homomorphism. It is also  surjective since  
		$f_F\in A$ for every $F\in \PolFun[R^k]$.  Thus, by the First Isomorphism Theorem of groups,
		\[
		\raise2pt\hbox{$A$} \big/ 
		\lower2pt\hbox{$\ker \psi$}
		\;\cong\; \PolFun[R^k]
		\text{ and } |\PolFun[R^k]|=\frac{|A|}{|\ker \psi|}.\] But, $|\ker \psi|=p^m$ for some non-negative integer $m$ (being a subgroup of a $p$-group).  Therefore, $\PolFun[R^k]$ is a $p$-group.
	\end{proof}
	\begin{theorem}\label{uniratio}
		Let $R$ be a finite local ring and  $k\ge1$. Then \[ \frac{|\UPF[R^k]|}{|\PolFun[R^k]|}=\frac{(q-1)^{q^k}}{q^{q^k}}.\]
	\end{theorem}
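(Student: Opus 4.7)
The plan is to reduce the calculation over the finite local ring $R$ to the analogous calculation over its residue field $\mathbb{F}_q = R/M$, via the natural quotient map $R \twoheadrightarrow \mathbb{F}_q$. Concretely, I would construct a map
\[
\rho\colon \PolFun[R^k] \longrightarrow \PolFun[\mathbb{F}_q^k]
\]
by sending a polynomial function $F$, represented by some $f\in R_{[k]}$, to the function on $\mathbb{F}_q^k$ induced by the reduction $\bar f\in\mathbb{F}_q[x_1,\ldots,x_k]$. The first step is to verify that $\rho$ is well defined and a ring homomorphism: if $f\quv g$ on $R^k$, then for any $\bar a\in\mathbb{F}_q^k$ lifted to $a\in R^k$ we have $\bar f(\bar a)=\overline{f(a)}=\overline{g(a)}=\bar g(\bar a)$, using that $R^k\twoheadrightarrow\mathbb{F}_q^k$ is surjective. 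Surjectivity of $\rho$ is immediate, since every element of $\mathbb{F}_q[x_1,\ldots,x_k]$ lifts to $R_{[k]}$.

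Next, I would show that $F\in\PolFun[R^k]$ is unit-valued if and only if $\rho(F)\in\PolFun[\mathbb{F}_q^k]$ is unit-valued. For the forward direction, $F(a)\in R^\times$ implies $\overline{F(a)}\neq 0$ for every $a$, so the induced function on $\mathbb{F}_q^k$ avoids $0$. For the converse, since $R$ is local, $r\in R^\times$ iff $\bar r\neq 0$ in $\mathbb{F}_q$; combined with the surjectivity of reduction on $R^k$, this gives $\UPF[R^k]=\rho^{-1}(\UPF[\mathbb{F}_q^k])$. Because $\rho$ is a surjective additive homomorphism, all its fibres have the same size $|\ker\rho|$, hence
\[
|\PolFun[R^k]|=|\ker\rho|\cdot|\PolFun[\mathbb{F}_q^k]|,\qquad |\UPF[R^k]|=|\ker\rho|\cdot|\UPF[\mathbb{F}_q^k]|,
\]
and the factor $|\ker\rho|$ cancels in the ratio, reducing the problem to the field case.

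Finally, the field case is straightforward by Lagrange interpolation: every function $\mathbb{F}_q^k\to\mathbb{F}_q$ is polynomial, so $|\PolFun[\mathbb{F}_q^k]|=q^{q^k}$; the unit-valued ones are exactly the functions $\mathbb{F}_q^k\to\mathbb{F}_q^\times$, of which there are $(q-1)^{q^k}$. Combining with the previous step yields the desired identity. The only genuinely substantive point is the well-definedness of $\rho$ together with the biconditional ``unit-valued upstairs iff unit-valued downstairs'', both of which rest on the facts that the reduction $R^k\twoheadrightarrow\mathbb{F}_q^k$ is surjective and that the set of non-units of $R$ is precisely $M$. Everything else is a routine fibre count.
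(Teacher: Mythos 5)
Your proposal is correct and follows essentially the same route as the paper: both reduce modulo the maximal ideal to get an epimorphism $\PolFun[R^k]\twoheadrightarrow\PolFun[\mathbb{F}_q^k]$ (your $\rho$ is the paper's $\lambda$), use that the non-units of a local ring are exactly $M$ to conclude that the unit-valued functions are the full preimage of the unit-valued functions downstairs, and then cancel the common fibre size $|\ker\rho|$ against the counts $q^{q^k}$ and $(q-1)^{q^k}$ over the residue field. The only difference is that you spell out the well-definedness of the reduction map in slightly more detail than the paper does.
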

	\begin{proof}
			Set $A=R/M\cong \mathbb{F}_q$. Since every function $F\colon \mathbb{F}_q^k\longrightarrow  \mathbb{F}_q$ is a polynomial function, we have $|\PolFun[A^k]|=q^{{q^k}}$. Also, since $|A^\times|=(q-1)$ we have $|\UPF[R^k]|=(q-1)^{q^k}$. Since polynomial functions preserve congruences modulo $M$, there exists 
		a natural ring epimorphism $\lambda \colon \PolFun[R^k]\longrightarrow  \PolFun[A^k]$.
		Thus, \[ |\PolFun[R^k]|=|\ker \lambda||\PolFun[A^k]|=q^{q^k}|\ker \lambda|.\]
		
		Now, an element $a\in R$ is a unit in $R$ if and only if $a+M$  is a unit in $A$. Therefore, $F\in  \UPF[R^k]$ if and only if $\lambda(F)\in  \UPF[A^k]$. This yields, $\UPF[R^k]=\lambda^{-1}( \UPF[A^k]) $, whence
		\[ |\UPF[R^k]|=|\ker \lambda||\UPF[A^k]|=(q-1)^{q^k}|\ker \lambda|.\]
		Therefore,\[ \frac{|\UPF[R^k]|}{|\PolFun[R^k]|}=\frac{(q-1)^{q^k}|\ker \lambda|}{q^{q^k}|\ker \lambda|}=\frac{(q-1)^{q^k}}{q^{q^k}}.\qedhere\]
	\end{proof}
	\begin{corollary}\label{uni2gr}
		
		Let $R$ be a finite local ring that is not a field with residue field $\mathbb{F}_2$. Then $\UPF[R^k]$ is a $2$-group.
	\end{corollary}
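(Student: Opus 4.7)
The plan is to combine Proposition~\ref{p-count} (polynomial-function additive group is a $p$-group) with the cardinality ratio in Theorem~\ref{uniratio}. Since the residue field of $R$ is $\mathbb{F}_2$, we have $q=2$ and consequently the residue characteristic is $p=2$; this is the single observation that drives everything.

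First I would invoke Proposition~\ref{p-count} to conclude that $|\PolFun[R^k]|$ is a power of $2$. Next I would substitute $q=2$ into the identity
\[
\frac{|\UPF[R^k]|}{|\PolFun[R^k]|}=\frac{(q-1)^{q^k}}{q^{q^k}}
\]
from Theorem~\ref{uniratio}, which collapses to $\frac{1}{2^{2^k}}$. Rearranging yields
\[
|\UPF[R^k]|\cdot 2^{2^k}=|\PolFun[R^k]|,
\]
so $|\UPF[R^k]|$ equals a power of $2$ divided by a power of $2$. Since $|\UPF[R^k]|$ is a positive integer, it must itself be a power of $2$, and hence $\UPF[R^k]$ is a $2$-group.

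There is essentially no obstacle here; the corollary is purely an arithmetic specialization of the two preceding results. The only care needed is to note explicitly that $q=2$ forces the prime $p$ in Proposition~\ref{p-count} to be $2$, so that the ``$p$-group'' conclusion aligns with the ``$2$-group'' conclusion, and to observe that the hypothesis ``not a field'' plays no role in the argument (the statement remains trivially true for $R=\mathbb{F}_2$, where $\UPF[R^k]$ is the trivial group).
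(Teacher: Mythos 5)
Your proof is correct and matches the paper's own argument: both substitute $q=2$ into Theorem~\ref{uniratio} to get $|\UPF[R^k]|=|\PolFun[R^k]|/2^{2^k}$ and then invoke Proposition~\ref{p-count} to conclude this quotient is a power of $2$. Your side remark that the ``not a field'' hypothesis is dispensable is also accurate, since $\UPF[\mathbb{F}_2^k]$ is trivial.
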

	\begin{proof}
		By Theorem~\ref{uniratio}, 
		$$|\UPF[R^k]|=\frac{(2-1)^{2^k}}{2^{2^k}}|\PolFun[R^k]|=\frac{|\PolFun[R^k]|}{2^{2^k}}.$$
		Since $R\ne \mathbb{F}_2$, the proof of Theorem~\ref{uniratio} shows that  $|\PolFun[R^k]|>|\PolFun[\mathbb{F}_2^k]|={2^{2^k}}$.
		The result follows from  Proposition~\ref{p-count}.
			\end{proof}
	\begin{remark}\label{ratiper}
		Similar to Theorem~\ref{uniratio}, but in one variable, 
		Jiang~\cite{ratio} has obtained the following relation between the number of 
		polynomial functions and the number of polynomial permutations on  a finite local ring $R$ which is not a field:
		
		\begin{equation}\label{ratiper1}
			\frac{|\PolFun[R]|}{|\PrPol[R]|}=\frac{q!(q-1)^q}{q^{2q}}.
		\end{equation}
	\end{remark}
	\begin{lemma}\cite[ Theorem 4.1]{Gabor}\label{gab}
		Let $R$  be a finite local commutative ring with a residue field of $q$ elements $\mathbb{F}_q$. Let $\PrPol$ be the group of 
		polynomial permutations. Then 
		\begin{enumerate}
			\item $\PrPol$ is solvable if and only if $q\le 4$;
			\item  $\PrPol$ is nilpotent if and only if $q=2$;
			\item  $\PrPol$ is abelian if and only if $R=\mathbb{F}_2$. 
		\end{enumerate}
	\end{lemma}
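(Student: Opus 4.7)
The plan is to analyze $\PrPol$ via the short exact sequence
\[ 1 \to K \to \PrPol \xrightarrow{\lambda} S_q \to 1, \]
where $\lambda$ is reduction modulo $M$ (the restriction to permutations of the ring map appearing in the proof of Theorem~\ref{uniratio}) and $K = \ker\lambda$. The first step is to establish surjectivity of $\lambda$ onto $S_q = \PrPol[\mathbb{F}_q]$: given $\sigma \in S_q$, lift any representative $\bar g \in \mathbb{F}_q[x]$ of $\sigma$ to some $g \in R[x]$, then add a suitable polynomial multiple $h(x)(x^q - x)$ (which leaves the induced function on $\mathbb{F}_q$ unchanged) chosen so that the reduced derivative $\bar g'$, modified by $-\bar h$ modulo $x^q - x$, becomes root-free on $\mathbb{F}_q$; Lemma~\ref{Necha} then guarantees that the adjusted $g$ is a polynomial permutation of $R$ with $\lambda(g) = \sigma$.

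All three claims then follow from pinning down the prime factorization of $|\PrPol|$ via Remark~\ref{ratiper} combined with Proposition~\ref{p-count}. Writing $q = p^r$, Proposition~\ref{p-count} gives $|\PolFun|$ a power of $p$, and substituting into the formula of Remark~\ref{ratiper} shows the following: for $q = 2$, $|\PrPol|$ is a power of $2$; for $q \in \{3, 4\}$, the only primes dividing $|\PrPol|$ are $2$ and $3$; and for $q \ge 5$, the quotient $S_q$ is itself non-solvable, so nothing further is needed.

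For claim (1), when $q \le 4$ the order $|\PrPol|$ involves only the primes $2$ and $3$, so Burnside's $p^a q^b$-theorem (applied with primes $2$ and $3$) gives solvability; when $q \ge 5$, the non-solvable quotient $S_q$ obstructs it. For claim (2), $q = 2$ makes $\PrPol$ a $2$-group, hence nilpotent, whereas for $q \ge 3$ the quotient $S_q$ is not nilpotent (as it contains elements of both orders $2$ and $3$). For claim (3), $R = \mathbb{F}_2$ gives $\PrPol = S_2$, abelian; if $q \ge 3$, $S_q$ is a non-abelian quotient; and if $q = 2$ with $R \ne \mathbb{F}_2$, one picks any non-zero $m \in M$ and checks that the polynomial permutations $F(x) = (1+m)x \in K$ and $\tilde\tau(x) = x + 1$ (a lift of the non-identity element of $S_2$) satisfy $(F \tilde\tau)(x) - (\tilde\tau F)(x) = m \neq 0$, so they do not commute.

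The main obstacles will be the surjectivity of $\lambda$ via the derivative-adjustment argument, which requires verifying that the map $h \mapsto -\bar h \pmod{x^q - x}$ exhausts all functions $\mathbb{F}_q \to \mathbb{F}_q$ (so that $\bar g'$ can always be modified to avoid roots), and carrying out the prime-factorization bookkeeping for $q \in \{3, 4\}$; once these are in place, the rest of the proof reduces to classical group-theoretic facts such as Burnside's theorem and the known structure of the small symmetric groups $S_q$.
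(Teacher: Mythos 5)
The paper does not prove this statement at all: it is imported verbatim as \cite[Theorem~4.1]{Gabor}, so there is no internal proof to compare yours against. Judged on its own terms, your argument is essentially sound and would serve as a self-contained proof using only material already in the paper. The exact sequence $1\to K\to \PrPol\xrightarrow{\lambda} S_q\to 1$ is well defined because polynomial functions preserve congruence modulo $M$, and your surjectivity argument is correct: adding $h(x)(x^q-x)$ changes the reduced derivative at $\bar a$ by $-\bar h(\bar a)$ (since $\overline{a}^q-\overline{a}=0$ and $q\cdot 1=0$ in $\mathbb{F}_q$), and Lagrange interpolation lets you choose $\bar h=\bar g'-1$ so that Lemma~\ref{Necha} applies. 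The order bookkeeping, Burnside for $q\in\{3,4\}$, the $2$-group argument for $q=2$, and the explicit non-commuting pair $(1+m)x$, $x+1$ for $q=2$, $R\ne\mathbb{F}_2$ all check out; note that $K$ itself is generally \emph{not} a $p$-group (e.g.\ $|K|=216$ for $R=\mathbb{Z}_9$), so your detour through Burnside rather than through nilpotency of the kernel is actually necessary.

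Three small repairs are needed. First, the ratio in Remark~\ref{ratiper} must be read with $|\PrPol|$ in the numerator: as printed it would force $|\PrPol|>|\PolFun|$, which is impossible since $\PrPol\subseteq\PolFun$, and for $q=3$ it even yields a non-integer; with the corrected orientation your conclusion that every prime dividing $|\PrPol|$ divides $p\cdot q!\,(q-1)^q$, hence is at most $\max(3,q)$ for $q\le 4$, goes through. Second, "contains elements of both orders $2$ and $3$" does not rule out nilpotency (consider $\mathbb{Z}_6$); for $q\ge 3$ you should instead invoke that $S_q$ has trivial center, or a non-normal Sylow subgroup. Third, both Lemma~\ref{Necha} and Jiang's formula are stated only for local rings that are \emph{not} fields, so the case $R=\mathbb{F}_q$ must be split off; it is trivial there since $\PrPol[\mathbb{F}_q]=S_q$, but it should be said.
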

	With these preparations, we are ready to present our main theorem of this subsection.
	\begin{theorem}\label{infucedgrproperty}
		Let 	$n\ge 1$ and let $R$  be a finite local commutative ring with a residue field of $q$ elements $\mathbb{F}_q$. Let 
		$	\pi_n(\mathcal{MT}_n)$ be the  group of permutations on $R^n$ induced by the monoid $\mathcal{MT}_n$. Then
		\begin{enumerate}
			\item $	\pi_n(\mathcal{MT}_n)$ is solvable if and only if $q\le 4$; \label{1sol}
			\item  $	\pi_n(\mathcal{MT}_n)$ is nilpotent if and only if $q=2$; \label{2nl}
			\item  $	\pi_n(\mathcal{MT}_n)$ is  abelian if and only if $n=1$ and   $R= \mathbb{F}_2$ \label{ab3}
		\end{enumerate}
	\end{theorem}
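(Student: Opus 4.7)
The central tool is the iterated semidirect-product decomposition
\[ \pi_n(\mathcal{MT}_n) \;\cong\; \bigl(\PolFun[R^{n-1}]\rtimes\UPF[R^{n-1}]\bigr)\rtimes\cdots\rtimes\bigl(\PolFun\rtimes\UPF\bigr)\rtimes\PrPol \]
of Theorem~\ref{ddirec}. Two observations are used throughout: for every $i\ge 1$ both factors $\PolFun[R^i]$ (the additive group of a commutative ring) and $\UPF[R^i]$ (its group of units) are abelian; and by iterating the projection $\pi_i(\mathcal{MT}_i)\twoheadrightarrow \pi_{i-1}(\mathcal{MT}_{i-1})$ supplied by Proposition~\ref{imsemiprod}(\ref{p3}), the bottom factor $\PrPol$ is a homomorphic image of $\pi_n(\mathcal{MT}_n)$. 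Consequently every property closed under quotients -- solvability, nilpotency, commutativity -- descends from $\pi_n(\mathcal{MT}_n)$ to $\PrPol$, which together with Lemma~\ref{gab} supplies every "only if" direction.

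For part (\ref{1sol}), each of the top $n-1$ factors $\PolFun[R^i]\rtimes\UPF[R^i]$ is metabelian, hence solvable, and a tower of solvable extensions is solvable; therefore $\pi_n(\mathcal{MT}_n)$ is solvable if and only if $\PrPol$ is, which by Lemma~\ref{gab}(1) happens exactly when $q\le 4$.

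For part (\ref{2nl}), the quotient argument gives the forward direction at once: nilpotency of $\pi_n(\mathcal{MT}_n)$ forces nilpotency of $\PrPol$, hence $q=2$ by Lemma~\ref{gab}(2). Conversely, assuming $q=2$, the plan is to show that every factor of the decomposition is a $2$-group; since extensions of $p$-groups by $p$-groups are $p$-groups and every $p$-group is nilpotent, this yields the nilpotency of $\pi_n(\mathcal{MT}_n)$. In detail: $\PolFun[R^i]$ is a $2$-group by Proposition~\ref{p-count} (with $p=2$); $\UPF[R^i]$ is a $2$-group by Corollary~\ref{uni2gr} when $R\ne\mathbb{F}_2$ and is trivial when $R=\mathbb{F}_2$; and for $\PrPol$, the case $R=\mathbb{F}_2$ is immediate because $\PrPol=S_2$, while for $R\ne\mathbb{F}_2$ the formula of Remark~\ref{ratiper}, combined with $|\PolFun[R]|$ being a power of $2$ (Proposition~\ref{p-count}) and the fact that for $q=2$ the quantities $q!$, $(q-1)^q$ and $q^{2q}$ are all powers of $2$, forces $|\PrPol|$ to be a power of $2$.

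For part (\ref{ab3}), the case $n=1$ is Lemma~\ref{gab}(3). When $n>1$ and $R\ne\mathbb{F}_2$, $\PrPol$ is already non-abelian and, as a quotient, witnesses non-commutativity of $\pi_n(\mathcal{MT}_n)$. The only case not handled by projecting to $\PrPol$ is $n>1$ with $R=\mathbb{F}_2$ (where $\PrPol$ is cyclic of order~$2$); here I exhibit explicit non-commuting elements of $\pi_2(\mathcal{MT}_2)$, namely the vector-polynomial permutations induced by $(x_1,\,x_1+x_2)$ and $(x_1+1,\,x_2)$, a direct computation via Definition~\ref{compose} showing that the two composite vector-polynomials differ in their second coordinate by $1$. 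Non-commutativity is then propagated to every $n\ge 2$ by the iterated surjection of Proposition~\ref{imsemiprod}(\ref{p3}). The main obstacle in the whole scheme is the converse of the nilpotency statement: showing that $\PrPol$ itself is a $2$-group when $q=2$ is the only step that goes beyond the semidirect-product formalism and requires combining the counting results of Remark~\ref{ratiper} and Proposition~\ref{p-count}.
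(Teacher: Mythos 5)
Your proposal is correct and follows essentially the same route as the paper: the decomposition of Theorem~\ref{ddirec}, reduction to $\mathcal{P}(R)$ via Lemma~\ref{gab} for the ``only if'' directions, the $2$-group argument combining Proposition~\ref{p-count}, Corollary~\ref{uni2gr} and Remark~\ref{ratiper} for nilpotency when $q=2$, and an explicit pair of non-commuting elements in $\pi_2(\mathcal{MT}_2)$ for the abelian case. The only (harmless) deviations are that you descend properties to $\mathcal{P}(R)$ via the quotient map rather than the split embedding, and you propagate non-commutativity to general $n$ by the surjection of Proposition~\ref{imsemiprod} rather than the embedding of Remark~\ref{strcembedding}.
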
 
	\begin{proof}
			In the light of Lemma~\ref{gab} we may assume that $n>1$ since $\pi_1(\mathcal{MT}_1)=\PrPol$.
		
		By Theorem~\ref{ddirec},  $$	\pi_n(\mathcal{MT}_n) \cong   ( \PolFun[R^{n-1}]  \rtimes \UPF[R^{n-1}])  \rtimes \cdots \rtimes (\PolFun   \rtimes \UPF)\rtimes \PrPol.$$
		Now,  $\PolFun[R^{i}]$ and $ \UPF[R^{i}]$ are solvable groups (being abelian groups).
		Also, extensions of solvable groups are solvable.    Therefore
		$\pi_n(\mathcal{MT}_n)$ is solvable if and only if $\PrPol$ is solvable  if and only if $q\le 4$ by Lemma~\ref{gab}. This proves (\ref{1sol}).
		
		(\ref{2nl})  Since every subgroup of a nilpotent group is nilpotent, we need only to consider the case
		$q=2$ by Lemma~\ref{gab}.  Assume that $q=2$ and without loss of  generality we consider the case $R\ne \mathbb{F}_q$. We claim that $\pi_n(\mathcal{MT}_n)$ is a $2$-group, hence it is nilpotent since $p$-groups are nilpotents. To show the claim, we notice that by means  of the above the decomposition of  $\pi_n(\mathcal{MT}_n)$, $\pi_n(\mathcal{MT}_n)$ is a $2$-group if and only if $\PrPol$, $\PolFun[R^{i}]$ and $ \UPF[R^{i}]$ are $2$-groups for $i=1,\ldots,n-1$.    Proposition~\ref{p-count} shows that $\PolFun[R^{i}]$ is a $2$-group, while $ \UPF[R^{i}]$ is a $2$-group by Corollary~\ref{uni2gr} for $i=1,\ldots,n-1$. Finally, using Equation~(\ref{ratiper1}), we see that $\PrPol$ is a $2$-group.
		
		To prove (\ref{ab3}), 
		we need only to consider the case $R= \mathbb{F}_2$ and  $n=2$ since  $\pi_2(\mathcal{MT}_2)$ 
		is embedded in $\pi_n(\mathcal{MT}_n)$ for every $n>2$ by Remark~\ref{strcembedding}.
		The routine calculations show that
		
		$$
		\pi_2((	x_1,x_1+x_2)
		)\circ	 
		\pi_2((	x_1+1,x_1+x_2))\ne \pi_2((	x_1+1,x_1+x_2) 
		)\circ\pi_2((	x_1,x_1+x_2)). \qedhere$$
	\end{proof}

	\section{A connection to the group of polynomial permutations of the ring dual numbers of several variables}\label{sec5}
	Given a commutative ring  $R $   with unity, we define the ring of dual numbers of $n\ge 1$ variables  over $R$   to be 
	the quotient ring   $R_{[n]}/ I $, where $I$ is the ideal generated by the set $\{x_ix_j\mid  i,j\in\{1,\ldots,n\}\}$. This quotient ring contains an isomorphic copy of $R$. Also, this ring can be viewed as the free commutative $R$-algebra   $\Ralfak$ with  basis $1,\alfa_1,\ldots,\alfa_n $  such that $\alfa_i$  stands for $x_i+I$ and $\alfa_i \alfa_j=0$ for $i,j=1,\ldots,n$. 
	
	To simplify   notation  we  shall use $R_n$  instead of  $\Ralfak$ (or $R_{[n]}/ I $).
	We observe here that every element $r\in R_n$ can be written uniquely as
	\[r=r_0+\sum\limits_{i=1}^{n}r_i\alfa_i  \text{ for some } r_0,\ldots,r_n\in R.\] 
	Because of this   every polynomial $g\in R_n[x]$ has a unique representation
	$$g =g_0 +\sum\limits_{i=1}^{n}g_i \alfa_i, \text{ for some } g_0,g_1,\ldots,g_n\in R[x].$$   Also, it is evident that $r_0+\sum_{i=1}^{n}r_i\mapsto (r_0,\ldots, r_n)$ establishes a one-to-one correspondence between $R_n$ and $R^{n+1}$. This correspondence motivates us
	to relate some groups of permutations of $R_n$ and $R$, at least in the finite case. 
	
	Throughout this section, since we deal with two different rings, we use the notation $\mathcal{MT}_{n+1}(R)$ for the triangular mononid over the ring $R$ to avoid any confusion with the coefficient ring.  
	
	In this section, for a   commutative ring $R$, we embed the monoid $\mathcal{MP}(R_n)$   of permutation polynomials on $R_n$   in the triangular monoid $\mathcal{MT}_{n+1}(R)$. Moreover, this  embedding leads to the embedding of the group of polynomial permutations  $\PrPol[ R_n ]$  in the group of $\pi_{n+1}(\mathcal{MT}_{n+1}(R)) $, whenever $R$  is finite. 
	
		\begin{lemma}\label{dualpoly}
		Let $k>1$, $R$ be a commutative ring and $g=g_0+\sum\limits_{i=1}^{k}g_i \alfa_i $, where 	$g_0,\ldots,g_k\in R[x]$.
		\begin{enumerate}
			\item
			If $f\in R[x]$,
			then
			\[
			f(g)=f(g_0)+\sum\limits_{i=1}^{k} g_if'(g_0)\alfa_i.	\]
			
			\item
			If $f=f_0 +\sum\limits_{i=1}^{k}f_i \alfa_i$, where   $f_0,\ldots, f_k\in R[x]$, then 
			\[			f(g)=f_0(g_0)+ \sum\limits_{i=1}^{k}(g_if_0'(g_0)+f_i(g_0))\alfa_i.	\]
		\end{enumerate}
	\end{lemma}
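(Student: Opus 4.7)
The plan is to prove part~(1) by a direct computation exploiting that the ``dual part'' of $g$ squares to zero, and then deduce part~(2) from part~(1) by linearity and the relations $\alfa_i\alfa_j=0$.

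First I would write $g=g_0+h$, where $h=\sum_{i=1}^{k}g_i\alfa_i\in R_n[x]$. The crucial observation is that $h^2=0$: expanding $h^2=\sum_{i,j}g_ig_j\alfa_i\alfa_j$ and using the defining relations $\alfa_i\alfa_j=0$ kills every term. Consequently, for any $n\ge 0$ the binomial expansion collapses to its first two summands,
\begin{equation*}
(g_0+h)^n=g_0^n+n\,g_0^{n-1}h,
\end{equation*}
since all subsequent terms carry a factor of $h^2$. Now if $f(x)=\sum_{n\ge 0}a_n x^n\in R[x]$, summing over $n$ gives
\begin{equation*}
f(g)=\sum_n a_n(g_0+h)^n=\sum_n a_n g_0^n+\Bigl(\sum_n n a_n g_0^{n-1}\Bigr)h=f(g_0)+f'(g_0)h,
\end{equation*}
and substituting $h=\sum_i g_i\alfa_i$ yields the formula of~(1).

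For part~(2), I would first split $f(g)=f_0(g)+\sum_{i=1}^{k}f_i(g)\alfa_i$. Applying part~(1) to the polynomial $f_0\in R[x]$ evaluated at $g$ gives
\begin{equation*}
f_0(g)=f_0(g_0)+\sum_{i=1}^{k}g_if_0'(g_0)\alfa_i.
\end{equation*}
For the remaining terms, applying part~(1) to each $f_i\in R[x]$ gives $f_i(g)=f_i(g_0)+\sum_{j}g_jf_i'(g_0)\alfa_j$, and then multiplying by $\alfa_i$ annihilates the sum, leaving $f_i(g)\alfa_i=f_i(g_0)\alfa_i$. Adding everything together produces the asserted formula.

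There is no real obstacle here; the computation is short once one isolates the nilpotent piece $h$. The only point worth being careful about is the distinction between the formal derivative $f_0'$ (taken in $R[x]$ before substitution) and the expression $f_0'(g_0)$ (evaluating that derivative at $g_0$), so that the coefficient of $\alfa_i$ in $f_0(g)$ is read off correctly; after that the relations $\alfa_i\alfa_j=0$ do all the work.
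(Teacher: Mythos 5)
Your proof is correct and follows essentially the same route as the paper, which simply expands $f(g)$ monomial by monomial using $\alfa_i\alfa_j=0$ and then deduces (2) from (1); your isolation of the nilpotent part $h$ with $h^2=0$ is just a cleaner packaging of that same expansion.
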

	\begin{proof}
		(2) Follows from (1).
		To show (1), write $f(x)=\sum\limits_{j=0}^{m}a_j x^j$. Then expanding $f(g)$ with the fact that $\alfa_i\alfa_j=0$ for all $i,j$ yields the result.
	\end{proof}
	We need  the following facts from~\cite{polysev}.
	\begin{lemma}\cite[Theorem~4.1]{polysev}\label{dualper} 
		Let  $R$ be a finite ring. Let $f =f_0+ \sum\limits_{i=1}^{k}f_i \alfa_i$, 
		where $f_0,\ldots,f_k \in R[x]$. Then $f$ is a permutation polynomial
		on $R_k$ if and only if the following conditions hold:
		\begin{enumerate}
			\item $f_0$ is a permutation polynomial on $R$;
			\item    $f_0 $ is a unit-valued polynomial on $R$.
		\end{enumerate}
		
	\end{lemma}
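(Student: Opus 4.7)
The plan is to reduce the permutation condition on $R_k$ to a system of $k+1$ scalar conditions on $R$ by exploiting the formula from Lemma~\ref{dualpoly}(2). The key observation is that, under the identification $r_0+\sum_{i=1}^k r_i\alpha_i \leftrightarrow (r_0,r_1,\ldots,r_k)$, the map induced by $f$ on $R_k$ corresponds to a very structured vector-polynomial on $R^{k+1}$: it is triangular in exactly the sense of the previous sections. This observation should make the statement fall out immediately once one writes down what $f(g)$ looks like.

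First, I would let $g = g_0 + \sum_{i=1}^k g_i\alpha_i$ be an arbitrary element of $R_k$ (with $g_0,\ldots,g_k \in R$ thought of as the variables), and apply Lemma~\ref{dualpoly}(2) to write
\[
f(g)=f_0(g_0)+\sum_{i=1}^{k}\bigl(g_i f_0'(g_0)+f_i(g_0)\bigr)\alpha_i.
\]
By uniqueness of the basis representation, $f$ permutes $R_k$ if and only if the map $R^{k+1}\to R^{k+1}$ given by
\[
(g_0,g_1,\ldots,g_k)\longmapsto\bigl(f_0(g_0),\;f_1(g_0)+g_1 f_0'(g_0),\;\ldots,\;f_k(g_0)+g_k f_0'(g_0)\bigr)
\]
is a bijection. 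This is precisely the evaluation of the vector-polynomial $(f_0,\;f_1+x_2 f_0',\;\ldots,\;f_k+x_{k+1} f_0')$ on $R^{k+1}$, which is an element of the monoid $\mathcal{MT}_{k+1}(R)$ exactly when $f_0$ is a permutation polynomial on $R$ and $f_0'$ is a unit-valued polynomial on $R$.

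For the forward direction, assuming $f$ permutes $R_k$, I would set $g_1=\cdots=g_k=0$ and read off the first coordinate to see that $f_0$ must permute $R$; then fix $g_0=a$ arbitrary and look at the remaining coordinates as the affine map $(g_1,\ldots,g_k)\mapsto(f_1(a)+g_1 f_0'(a),\ldots,f_k(a)+g_k f_0'(a))$, which is a bijection of $R^k$ if and only if $f_0'(a)\in R^\times$. Since this must hold for every $a\in R$, $f_0'$ is unit-valued. For the converse, Lemma~\ref{pervpol} (applied to the triangular vector-polynomial displayed above) immediately yields that the induced map on $R^{k+1}\cong R_k$ is a bijection.

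The substantive step is really just the expansion in Lemma~\ref{dualpoly}; once that is in place, the result is a direct instance of Theorem~\ref{constmon}/Lemma~\ref{pervpol}, and the only thing to check carefully is that the condition ``$f_0'$ unit-valued'' is both necessary (from varying $a$ over all of $R$) and sufficient (so that each $f_i+x_{i+1}f_0'$ has the required shape of a triangular coordinate). There is no real obstacle — the potential pitfall is only to make sure one writes $f_0'$ (the derivative) rather than $f_0$ itself in the unit-valued condition, since it is $f_0'(g_0)$, not $f_0(g_0)$, that multiplies the $\alpha_i$-components in Lemma~\ref{dualpoly}(2).
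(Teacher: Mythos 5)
The paper does not prove this lemma at all; it is imported verbatim from \cite[Theorem~4.1]{polysev}, so there is no internal proof to compare against. Your argument is correct and has the virtue of making the result self-contained within this paper: after expanding $f(g)$ via Lemma~\ref{dualpoly}(2) and identifying $R_k$ with $R^{k+1}$, the induced map is exactly the evaluation of the triangular vector-polynomial $(f_0,\,f_1+x_2f_0',\,\ldots,\,f_k+x_{k+1}f_0')$, so sufficiency is an instance of Lemma~\ref{pervpol}, and necessity follows by peeling off coordinates. This is precisely the structural link the author exploits later in Section~\ref{sec5} when embedding $\mathcal{MP}(R_n)$ into $\mathcal{MT}_{n+1}(R)$, so your proof is very much in the spirit of the paper even though the paper itself outsources the lemma. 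You are also right that condition (2) as printed should read ``$f_0'$ is a unit-valued polynomial'' --- the derivative, not $f_0$ itself --- which is confirmed by how the lemma is invoked in the proof of the embedding proposition.

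Two small points to tighten in the necessity direction. First, ``set $g_1=\cdots=g_k=0$ and read off the first coordinate'' does not quite show $f_0$ is a permutation: restricting the domain and projecting loses both injectivity and surjectivity information. The clean argument is that the first output coordinate is $f_0(g_0)$ regardless of $g_1,\ldots,g_k$, so surjectivity of the full map forces $f_0\colon R\to R$ to be surjective, and finiteness of $R$ then gives bijectivity. Second, when you conclude that the affine map $(g_1,\ldots,g_k)\mapsto(f_1(a)+g_1f_0'(a),\ldots)$ is a bijection of $R^k$ only if $f_0'(a)\in R^\times$, note that this uses finiteness again (in a finite commutative ring, multiplication by $u$ is surjective iff $u$ is a unit). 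Both are one-line fixes, not gaps in the approach.
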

	
	\begin{lemma}\cite[Corollary~3.6]{polysev} \label{CHS6} 
		
		Let $f =f_0 +\sum\limits_{i=1}^{k}f_i \alfa_i$ and $g=g_0+\sum\limits_{i=1}^{k}g_i \alfa_i $, where 
		$f_0,\ldots, f_k, g_0,\ldots, g_k \in R[x]$.
		
		Then $f $ and $g$ induce the same function on $R_k$ if and only if the following 
		conditions hold:
		\begin{enumerate}
			\item
			$f_i \quv  g_i $ on $R$ for $i=0,\dots,k$;
			\item
			$f_0'\quv g_0'$ on $R$.
		\end{enumerate}
	\end{lemma}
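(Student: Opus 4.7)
The plan is to reduce the equivalence of $f$ and $g$ on $R_k$ to a coordinate-wise comparison in the free $R$-basis $\{1,\alfa_1,\ldots,\alfa_k\}$, using the explicit expansion of $f(r)$ provided by Lemma~\ref{dualpoly}(2). First I would fix an arbitrary element $r=r_0+\sum_{i=1}^{k} r_i\alfa_i\in R_k$ (with $r_0,\ldots,r_k\in R$) and apply Lemma~\ref{dualpoly}(2) to both $f$ and $g$ to obtain
\[
f(r)=f_0(r_0)+\sum_{i=1}^{k}\bigl(r_i f_0'(r_0)+f_i(r_0)\bigr)\alfa_i,\qquad
g(r)=g_0(r_0)+\sum_{i=1}^{k}\bigl(r_i g_0'(r_0)+g_i(r_0)\bigr)\alfa_i.
\]
Because $\{1,\alfa_1,\ldots,\alfa_k\}$ is a free $R$-basis of $R_k$, the equation $f(r)=g(r)$ in $R_k$ splits into the $k+1$ scalar equalities
\[
f_0(r_0)=g_0(r_0),\qquad r_i f_0'(r_0)+f_i(r_0)=r_i g_0'(r_0)+g_i(r_0)\ \ (i=1,\ldots,k)
\]
in $R$, and requiring $f$ and $g$ to induce the same function on $R_k$ is exactly the requirement that these hold for all choices of $r_0,r_1,\ldots,r_k\in R$.

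Next I would extract the two stated conditions from these equalities by making specific substitutions. The first equation gives $f_0\quv g_0$ on $R$ at once. For each $i\ge 1$, specialising $r_i=0$ in the second family forces $f_i(r_0)=g_i(r_0)$ for every $r_0\in R$, i.e.\ $f_i\quv g_i$ on $R$; specialising $r_i=1$ and cancelling the now-established equality $f_i(r_0)=g_i(r_0)$ yields $f_0'(r_0)=g_0'(r_0)$ for every $r_0\in R$, i.e.\ $f_0'\quv g_0'$ on $R$. This gives the forward direction; the converse is immediate, since substituting conditions (1) and (2) back into the two expansion formulas makes the $1$-component and each $\alfa_i$-component of $f(r)$ and $g(r)$ coincide identically.

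There is essentially no obstacle here beyond the initial invocation of Lemma~\ref{dualpoly} to get the correct expansions; the only point requiring care is that, in the $\alfa_i$-component, the contributions of $f_i$ and of $f_0'$ cannot be separated by looking at a single value of $r_i$, so one must test two values ($r_i=0$ and $r_i=1$) to disentangle them. Once this is observed the proof is a direct computation, and the uniqueness of representation in the $R$-basis $\{1,\alfa_1,\ldots,\alfa_k\}$ of $R_k$ does all the real work.
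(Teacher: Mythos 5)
Your argument is correct. Note, however, that the paper does not prove this lemma at all: it is quoted verbatim from \cite[Corollary~3.6]{polysev}, so there is no internal proof to compare against. Your derivation --- expand $f(r)$ and $g(r)$ via the Taylor-type formula of Lemma~\ref{dualpoly}(2), use the uniqueness of the representation $r_0+\sum_i r_i\alfa_i$ to split $f(r)=g(r)$ into $k+1$ scalar identities, and then specialise $r_i=0$ and $r_i=1$ to disentangle $f_i\quv g_i$ from $f_0'\quv g_0'$ --- is a complete and valid proof for an arbitrary commutative ring, and is the natural way to establish the cited result; the converse direction is indeed immediate by substitution.
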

	We conclude this section with the following embedding result.
	\begin{proposition}
		Let $n>1$ and let $R$ be a   commutative ring. 
		Then
		
		\begin{enumerate}
			\item  $\mathcal{MP}(R_n)\hookrightarrow   \mathcal{MT}_{n+1}(R)$; and whenever $R$ is finite
			\item  $\PrPol[R_n]\hookrightarrow  \pi_{n+1}(\mathcal{MT}_{n+1}(R))$.
		\end{enumerate} 
	\end{proposition}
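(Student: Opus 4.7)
The plan is to construct an explicit embedding $\Phi\colon \mathcal{MP}(R_n) \longrightarrow \mathcal{MT}_{n+1}(R)$ and then check that it descends, when $R$ is finite, to an embedding of the corresponding induced groups of permutations. Writing every polynomial in $R_n[x]$ uniquely as $f = f_0 + \sum_{i=1}^{n} f_i \alfa_i$ with $f_0,\ldots,f_n \in R[x]$, Lemma~\ref{dualpoly}(2) shows that $f$ sends a point $r_0 + \sum r_i \alfa_i$ of $R_n$ to $f_0(r_0) + \sum_{i=1}^{n}(f_i(r_0) + r_i f_0'(r_0))\alfa_i$, i.e., under the natural bijection $R_n \longleftrightarrow R^{n+1}$, to the tuple
\[
(f_0(r_0),\, f_1(r_0)+r_1 f_0'(r_0),\, \ldots,\, f_n(r_0)+r_n f_0'(r_0)).
\]
This motivates setting
\[
\Phi(f) = (f_0(x_1),\, f_1(x_1)+x_2 f_0'(x_1),\, \ldots,\, f_n(x_1)+x_{n+1} f_0'(x_1)).
\]
By Lemma~\ref{dualper}, $f \in \mathcal{MP}(R_n)$ forces $f_0 \in \mathcal{MP}(R)$ and $f_0' \in UV(R[x_1])$, so $\Phi(f)$ lies in $\mathcal{MT}_{n+1}(R)$; and injectivity of $\Phi$ on polynomials is immediate by matching coordinates ($f_0 = g_0$ gives $f_0' = g_0'$, then $f_i = g_i$).

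The heart of part~(1) is to verify that $\Phi$ respects composition. Applying Lemma~\ref{dualpoly}(2) inside the ring $R_n[x]$ yields $(f \circ g)_0 = f_0 \circ g_0$ and $(f \circ g)_i = f_i \circ g_0 + g_i \cdot (f_0' \circ g_0)$ for $i \geq 1$; together with the ordinary chain-rule identity $(f \circ g)_0' = (f_0' \circ g_0) \cdot g_0'$, a direct computation of the $(i+1)$-st coordinate of $\Phi(f) \circ \Phi(g)$ via Definition~\ref{compose} gives
\[
f_i(g_0) + (g_i + x_{i+1} g_0')(f_0' \circ g_0) = (f \circ g)_i + x_{i+1} (f \circ g)_0',
\]
which is precisely the $(i+1)$-st coordinate of $\Phi(f \circ g)$. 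Hence $\Phi$ is a monoid monomorphism, establishing~(1).

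For part~(2), when $R$ is finite, I will define $\bar\Phi\colon \PrPol[R_n] \longrightarrow \pi_{n+1}(\mathcal{MT}_{n+1}(R))$ by sending the permutation of $R_n$ induced by $f$ to $\pi_{n+1}(\Phi(f))$. The decisive observation is that two complementary lemmas characterize the same equivalence: Lemma~\ref{CHS6} says $f \quv g$ on $R_n$ exactly when $f_i \quv g_i$ on $R$ for $i = 0,\ldots,n$ together with $f_0' \quv g_0'$ on $R$, while iterated use of Lemma~\ref{uniqex} (or Lemma~\ref{equivrela}) says $\Phi(f) \quv \Phi(g)$ on $R^{n+1}$ under exactly the same list of conditions. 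These criteria coinciding gives well-definedness and injectivity of $\bar\Phi$ in a single stroke, while the homomorphism property is inherited from~(1) via the epimorphism $\pi_{n+1}$.

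I expect the main obstacle to be conceptual rather than computational: recognizing that the derivative $f_0'$, which encodes the action of $f$ on the nilpotent directions $\alfa_i$, behaves under composition exactly like the unit-valued multiplier required in the defining form~(\ref{vectel}) of an element of $\mathcal{MT}_{n+1}(R)$. Once this parallel between the chain rule and the semi-direct product structure of $\mathcal{MT}_{n+1}(R)$ is identified, both the composition verification in~(1) and the equivalence analysis in~(2) reduce to routine bookkeeping supported by Lemmas~\ref{dualpoly},~\ref{dualper},~\ref{CHS6} and~\ref{uniqex}.
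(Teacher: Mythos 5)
Your proposal is correct and follows essentially the same route as the paper: the same map $f\mapsto(f_0(x_1),f_1(x_1)+x_2f_0'(x_1),\ldots,f_n(x_1)+x_{n+1}f_0'(x_1))$, justified by Lemma~\ref{dualper}, with the composition check via Lemma~\ref{dualpoly} and the descent to induced permutations via Lemmas~\ref{CHS6}, \ref{equivrela} and \ref{uniqex}. Your explicit observation that the chain rule $(f_0\circ g_0)'=(f_0'\circ g_0)\cdot g_0'$ is what makes the derivative act as the unit-valued multiplier is a nice way of organizing the same computation the paper carries out directly.
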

	\begin{proof}
		(1)	Let $f=f_0+ \sum\limits_{i=1}^{n}f_i \alfa_i\in \mathcal{MP}(R_n)$, where $f_0,\ldots, f_n\in R[x]$. Then, $f_0$ is a permutation polynomial and $f_0'$ is a unit-valued polynomial by Lemma~\ref{dualper}. Define $ \psi\colon \mathcal{MP}(R_n) \longrightarrow \mathcal{MT}_{n+1}(R)$ by \[\psi(f)=(f_0(x_1), 
		f_1(x_1)+x_2f_0'(x_1),\ldots, 		f_{n}(x_1)+x_{n+1} f_0'(x_1)). \] 					
		
		Since $f_0$ is a permutation polynomial and $f_0'$ is a unit-valued polynomial, $\psi(f)$ has the form (\ref{vectel}) in Theorem~\ref{constmon}. Thus,  $\psi(f)\in  \mathcal{MT}_{n+1}$ and $\psi$ is defined.
		Now, let $g=g_0 +\sum\limits_{i=1}^{n}g_i \alfa_i\in \mathcal{MP}(R_n)$, where $g_0,\ldots, g_n\in R[x]$. 
		By Lemma~\ref{dualpoly}, $f\circ g(x)=f_0(g_0(x))+\sum\limits_{i=1}^{n}(g_i(x)f_0'(g_0(x))+f_i(g_0(x))) \alfa_i$.
					
		Thus, by the definition of $\psi$ and the definition of composition, \begin{align*}
			\psi(f\circ g)&=(f_0( g_0(x_1)), 		(g_1(x_1)f_0'(g_0(x_1))+f_1(g_0(x_1)))+x_2g_0'(x_1)f_0'(g_0(x_1)),\ldots  \\				
			& \quad	\ldots,
			(g_n(x_1)f_0'(g_0(x_1))+f_n(g_0(x_1)))+x_{n+1} g_0'(x_1)f_0'(g_0(x_1)))\\
			&=\psi(f)\circ \psi(g).
		\end{align*} 
		
		Thus, $\psi$ is a homomorphism. Evidently, $\psi$    is one-to-one. 
		
		(2) Define $ \phi\colon \PrPol[R_n] \longrightarrow \pi_{n+1} (\mathcal{MT}_{n+1})$ by
		$\phi(F)=\pi_{n+1}(\psi(f))$, where $f=f_0+ \sum\limits_{i=1}^{n}f_i \alfa_i\in \mathcal{MP}(R_n)$ is a permutation polynomial inducing $F$ on $R_n$, with $f_0,\ldots, f_n\in R[x]$. Then, by Lemma~\ref{CHS6} and Lemma~\ref{equivrela}, $\phi$ is well defined. Also, $\phi$  is a homomorphism as it is a composition of homomorphisms. To show  $\phi$ is injective, consider $G\in\PrPol[R_n]$ such that $\phi(F)=\phi(G)$. So, there exists $g=g_0 +\sum\limits_{i=1}^{n}g_i \alfa_i\in \mathcal{MP}(R_n)$, where $g_0,\ldots, g_n\in R[x]$ such that $G$ is induced by $g$. Then $\phi(F)=\phi(G)$ implies
		that $\pi_{n+1}(\psi(g))=\pi_{n+1}(\psi(f))$, that is 
		\begin{align*}
			\pi_{n+1}((f_0(x_1), 
			f_1(x_1)+x_2f_0'(x_1)&,\ldots, 		f_{n}(x_1)+x_{n+1} f_0'(x_1)))=\\
			&=\pi_{n+1}((g_0(x_1), 
			g_1(x_1)+x_2g_0'(x_1),\ldots, 		g_{n}(x_1)+x_{n+1} g_0'(x_1))).
		\end{align*}
		By   Lemma~\ref{equivrela}, we have
			\[
		\begin{matrix}
			f_0(x_1)\quv g_0(x_1) \text{ on } R, \text{ and }\\
			f_{i}(x_1)+x_{i+1} f_0'(x_1)\quv g_{i}(x_1)+x_{i+1} g_0'(x_1)  \text{ on } R \text{ for } i=1,\dots,n.  
		\end{matrix}
		\] 
		Hence, by Lemma~\ref{uniqex}, $f_0'(x_1) \quv g_0'(x_1)$  and  $f_{i}(x_1) \quv g_{i}(x_1)$ for $i=1,\ldots,n$.
		Now,  the conditions of  Lemma~\ref{CHS6} are valid, and therefore,   $f$ and $g$ induce the same function on $R_n$, that is $F=G $.
		This finishes the proof.		
	\end{proof}
	
	\section{The tame monoid and related questions}\label{sec6}
	Recall from Remark~\ref{coincidestrctures} that the classical triangular group  $KTR_n$ consists of all invertible vector-polynomials of the form
	$(a_1x_1+a_1, f_1(x_1)+a_2x_2,\ldots,f_{n-1}(x_1,\ldots,x_{n-1})+ax_n)$, where  $a_0\in R$ and $a_1,\ldots, a_n\in R^\times$. It is well-known that the group $KTR_n$ and the affine Group $Aff_n $ consisting of all invertible  linear vector-polynomials generate the so-called tame group $Tm_n$. An element  of  $Tm_n$ is called tame,    and  the  tame question asks (when $R$ is a field and $n\ge 2$)  whether
	the group $GA_n$ of all invertible vector-polynomials (all R-automorphism of ${R_{[n]}})$ consists only of tame elements, that is, whether    $GA_n=Tm_n$ or not?   When $n=2$, the equality always holds; this was proved by Jung~\cite{Jung} for fields of characteristic zero,   and   was shown for positive characteristic by  Van der Kulk \cite{Kulktame2vgencase}. When  $n=3$ and $R$ is a field of characteristic zero, Shestakov and Umirbaiev ~\cite{Umirwild} showed that $GA_3\ne Tm_3$ by showing that the Nagata automorphism is not in $Tm_3$. While the case $n=3$ for positive characteristic fields and the general case  $n>3$  for any field are still open. Unfortunately, the embedding of the Nagata automorphism (by adding the   variables $x_4,\ldots,x_n$) in the group $GA_n$ is shown  by Smith~\cite{Marthastablytame} to be an element of the tame group $Tm_n$ for $n\ge 4$. We notice here that Maubach~\cite{Maubffield}  showed that $\pi_n(Tm_n)$ is just the
	group of all even permutations of $\mathbb{F}_{2^r}^n$ with $r>1$. 
	He noticed that finding an element $\vec{f}\in GA_n$ such that $\pi_n(\vec{f})$ is an odd permutation is sufficient to showing  $GA_n\ne Tm_n$ (see~\cite[Corollary 3.3]{Maubffield}).

	The case $n=1$ is  much easier as in this case  $Aut_R(R[x])=GA_1=Tm_1=\{ax+b: a,b\in R, a\ne 0\}$.
	However, when $R$ is not reduced, we have $Tm_1\ne GA_1 $ as for example $x+rx^2\in GA_1\setminus Tm_1$ for every non-zero nilpotent $r$. Then may be the tameness  problem is more trivial on non-reduced rings and  given an $n$, there always exists a non-tame invertible element. But, we could not find a reference discussing  the tameness problem on non-reduced rings. Nevertheless, this does not dampen our enthusiasm for suggesting a more general tame structure rather than the tame group.  This generalized structure allows us to modify
	the tameness problem and to ask some related questions.
	
	In the following, we define the tame monoid and give our general definition for  tame elements.  
	\begin{definition}
		Let $R$ be a commutative ring with unity. We call   the monoid  generated by the triangular monoid $\mathcal{MT}_n$ and the affine  group $Aff_n$  the tame  monoid which  we denote by $\mathcal{TAM}_n $. We call every element of $\mathcal{TAM}_n $ tame.
	\end{definition}
	Let $\mathcal{MPP}_n$ denote the monoid of vector-permutation polynomials. Then, by construction, 
	$\mathcal{MPP}_n$ contains $\mathcal{TAM}_n $. Now, we have the following general form of the tameness problem.
	\begin{question}\label{qtm}
		Let $n>1$, and let $R$ be a commutative ring with unity. Is it true that $\mathcal{MPP}_n=\mathcal{TAM}_n $?
	\end{question}
	
	\begin{remark}
		\begin{enumerate}
			\item If $n=1$, then we  always have $\mathcal{MPP}_1=\mathcal{TAM}_1$, as in this case  $\mathcal{MPP}_1  =\mathcal{MP}(R)=\mathcal{MT}_1$ and $Aff_1\subseteq  \mathcal{MT}_1$.
			\item If $R$ is an algebraically  closed  field of characteristic zero, by Proposition~\ref{algcase},  we have  $\mathcal{MT}_n=KTR_n$,  
			and  hence $\mathcal{MPP}_n=GA_n$. But, then Question~\ref{qtm} is reduced to the classical problem of the tame group.
		\end{enumerate} 
	\end{remark}
	We end our paper with two related questions that can be more sensible in the case of finite fields or non-reduced rings.
	\begin{question}
		Let $n>1$ and let $\vec{f}\in \mathcal{MPP}_n$. Then  does $\pi_n(\vec{f}) \in \pi_n(\mathcal{TAM}_n)$ implies that $\vec{f}\in \mathcal{TAM}_n$? 
	\end{question}
	
	Another result of Maubach~\cite{Maubffield}  over the finite field $\mathbb{F}_q$ with $q=2$ or $q$ is odd,
	shows that $\pi_n(Tm_n)$ is the symmetric group on the elements of $\mathbb{F}_q^n$ ($n>1$).  In this case, we  at least   know that the answer to the following question is true.

	\begin{question}
		
	Let $n>1$.	Is every vector-polynomial permutation  tamely represented?
		In terms of symbols: Let $\vec{F}\in  \pi_n(\mathcal{MPP}_n)$. Then does there exist $\vec{f} \in \mathcal{TAM}_n$ such that $\pi_{n}(\vec{f})=\vec{F}$? 
		
	\end{question} 
	\noindent {\bf Acknowledgment.}
	This work is supported by the Austrian Science Fund (FWF):P 35788-N. I want to express my sincere thanks to the referee for the numerous comments and valuable suggestions that improved the view of this article.  Also, I   would like to thank 
	Amran A. Al-Aghbari for proof reading.
	
	\bibliographystyle{plain}
	\bibliography{Vectorpol}
\end{document}